\renewcommand*\env@matrix[1][*\c@MaxMatrixCols c]{%
 \hskip -\arraycolsep
 \let\@ifnextchar\new@ifnextchar
 \array{#1}}
\newtheorem{coro}{Corollary}
\newtheorem{defi}{Definition}
\newtheorem{teo}{Theorem}
\newtheorem{pro}{Proposition}
\newtheorem{lemma}{Lemma}
\newtheorem{rem}{Remark}
\renewcommand{\d}{\operatorname{d}}
\newcommand{\diag}{\operatorname{diag}}
\newcommand{\C}{\mathbb{C}}
\newcommand{\B}{\mathbb{B}}
\newcommand{\N}{\mathbb{N}}
\newcommand{\R}{\mathbb{R}}
\DeclareRobustCommand{\gaussk}{\DOTSB\gaussk@\slimits@}
\newcommand{\gaussk@}{\mathop{\vphantom{\sum}\mathpalette\bigcal@{K}}}
\newcommand{\bigcal@}[2]{%
 \vcenter{\m@th
 \sbox\z@{$#1\sum$}%
 \dimen@=\dimexpr\ht\z@+\dp\z@
 \hbox{\resizebox{!}{0.8\dimen@}{$\mathcal{K}$}}%
}%
}
\newcommand{\cfracplus}{\mathbin{\cfracplus@}}
\newcommand{\cfracplus@}{%
 \sbox\z@{$\dfrac{1}{1}$}%
 \sbox\tw@{$+$}%
 \raisebox{\dimexpr\dp\tw@-\dp\z@\relax}{$+$}%
}
\newcommand{\cfracdots}{\mathord{\cfracdots@}}
\newcommand{\cfracdots@}{%
 \sbox\z@{$\dfrac{1}{1}$}%
 \sbox\tw@{$+$}%
 \raisebox{\dimexpr\dp\tw@-\dp\z@\relax}{$\cdots$}%
}
\newcommand*{\relrelbarsep}{.386ex}
\newcommand*{\relrelbar}{%
	\mathrel{%
		\mathpalette\@relrelbar\relrelbarsep
	}%
}
\newcommand*{\@relrelbar}[2]{%
	\raise#2\hbox to 0pt{$\m@th#1\relbar$\hss}%
	\lower#2\hbox{$\m@th#1\relbar$}%
}
\providecommand*{\rightrightarrowsfill@}{%
	\arrowfill@\relrelbar\relrelbar\rightrightarrows
}
\providecommand*{\leftleftarrowsfill@}{%
	\arrowfill@\leftleftarrows\relrelbar\relrelbar
}
\providecommand*{\xrightrightarrows}[2][]{%
	\ext@arrow 0359\rightrightarrowsfill@{#1}{#2}%
}
\providecommand*{\xleftleftarrows}[2][]{%
	\ext@arrow 3095\leftleftarrowsfill@{#1}{#2}%
}
\begin{document}

\title[Oscillation, multiple orthogonality and Markov chains]{Oscillatory banded Hessenberg matrices, multiple orthogonal polynomials and Markov chains}

\author[A Branquinho]{Amílcar Branquinho$^{1}$}
\address{$^1$CMUC, Departamento de Matemática,
 Universidade de Coimbra, 3001-454 Coimbra, Portugal}
\email{ajplb@mat.uc.pt}

\author[A Foulquié]{Ana Foulquié-Moreno$^{2}$}
\address{$^2$CIDMA, Departamento de Matemática, Universidade de Aveiro, 3810-193 Aveiro, Portugal}
\email{foulquie@ua.pt}

\author[M Mañas]{Manuel Mañas$^{3}$}
\address{$^3$Departamento de Física Teórica, Universidad Complutense de Madrid, Plaza Ciencias 1, 28040-Madrid, Spain \&
 Instituto de Ciencias Matematicas (ICMAT), Campus de Cantoblanco UAM, 28049-Madrid, Spain}
\email{manuel.manas@ucm.es}

\keywords{Banded Hessenberg matrices, oscillatory matrices, totally nonnegative matrices, multiple orthogonal polynomials, Favard spectral representation, Gauss--Borel factorization, positive bidiagonal factorization, Markov chains, multidiagonal stochastic matrices, Karlin--McGregor representation formula, recurrent chains, ergodic chains}

\subjclass{42C05, 33C45, 33C47, 60J10, 60Gxx, 47B39, 47B36}

%\enlargethispage{1.25cm}

\begin{abstract}
A spectral Favard theorem for bounded banded lower Hessenberg matrices that admit a positive bidiagonal factorization is found. The large knowledge on the spectral and factorization properties of oscillatory matrices leads to this spectral Favard theorem in terms of sequences of multiple orthogonal polynomials of types I and II with respect to a set of positive Lebesgue--Stieltjes~measures. Also a multiple Gauss quadrature is proven and corresponding degrees of precision are found. 

This spectral Favard theorem is applied to Markov chains with $(p+2)$-diagonal transition matrices, i.e. beyond birth and death, that admit a positive stochastic bidiagonal factorization.
In the finite case, the Karlin--McGregor spectral representation is given.
It is shown that the Markov chains are recurrent and explicit expressions in terms of the orthogonal polynomials for the stationary distributions are given.
Similar results are obtained for the countable infinite Markov chain. Now the Markov chain is not necessarily recurrent, and it is characterized in terms of the first measure.
Ergodicity of the Markov chain is discussed in terms of the existence of a mass at $1$, which is an eigenvalue corresponding to the right and left eigenvectors. 
\end{abstract}
 
 \maketitle
 
% \clearpage
 
% \tableofcontents

\thispagestyle{empty}
 \section{Introduction}
In functional analysis of linear operators it is known that a self-adjoint operator defined in a Hilbert space with a cyclic vector $u$ can be represented as a symmetric Jacobi operator in a suitable orthonormal basis, see for instance \cite[Theorem 5.3.1]{Simon}.
 
 The infinite matrix representative of these Jacobi operators is tridiagonal which leads to a three term recurrence relation definition of an orthonormal polynomial basis with respect to the spectral measure of the Jacobi operator.
 In this way the theory of orthogonal polynomials is instrumental to understand the spectrality of a self-adjoint operator in a Hilbert space \cite{Simon}.
 The construction of the spectral measure for these Jacobi operators can be reached by means of a Markov theorem, a uniform limit of ratio for polynomials satisfying the three term recurrence relations coming from the Jacobi operator. The spectral measure is in fact a measure of orthogonality for the sequence of orthogonal polynomials coming from the Jacobi operator, as they share its moments. Spectral theorems hold beyond self-adjointness for normal operators (the operator commutes with its adjoint).
In the case of banded Hessenberg operators, the self-adjointness or normality no more takes place. Hence, the mentioned well established spectral theory is lost. Nevertheless, these banded Hessenberg operators case can be understood as higher order ($4$ or more terms) recursion relations and have associated recursion polynomials, known as type I and II.
 
Moreover, a biorthogonality can be derived between these two systems that stands as duality relations. In fact, this is a sort of \emph{nonspectral} Favard theorem for banded Hessenberg operators. Several authors, in particular Sorokin and Van~Iseghem have studied these nonspectral Favard type theorems \cite{Sorokin_Van_Iseghem_1,Sorokin_Van_Iseghem_2,Sorokin_Van_Iseghem_3}.
 Unfortunately these nonspectral Favard type theorems for high order recurrence relations only contemplate the existence of functionals (and define the moments in terms of the recursion coefficients) that happened to be biorthogonal. These sequences of functions takes part of a theory of multiple orthogonal polynomials, type I and II \cite{nikishin_sorokin}. For more on multiple orthogonal polynomials see \cite[Chapter 23]{Ismail} and also \cite{afm,bfm}.
 
 \enlargethispage{.25cm}
 
 A natural question arises from these facts for the operator theory side: Do we have a spectral Favard theorem for a subclass of bounded lower Hessenberg matrices? Or, from the orthogonal polynomial side:
 Do multiple orthogonal polynomials can be of any use in the spectral description of a banded Hessenberg operators? That is, do they give an interpretations for the spectral points, as well as of the spectral measures of the corresponding operator?
 
 A first attempt tackle these problems has been made by Kalyagin in \cite{Kalyagin,Kaliaguine}
 where the author defines a class of operators related with the Hermite--Padé approximants and connects their spectral analysis with the asymptotic properties of polynomials defined by systems of orthogonality relations (that coincides with the notion of multiple orthogonality). Some years later, this author in a joint work with Aptekarev and Van Iseghem 
 \cite{Aptekarev_Kaliaguine_VanIseghem} made the analysis of banded Hessenberg operators with one upper diagonal based on the analysis of the genetic sums formulas for the moments of the operator.
 Regarding these achievements in \cite[Conclusions]{Coussement-VanAssche} that Coussement and Van Assche say:
\emph{``However, finding necessary and sufficient conditions on the recurrence coefficients to have positive orthogonality measures on the real axis is still an open problem''}.
See also \cite{VanAssche2}. 

In \cite{Aptekarev_Kaliaguine_Lopez} general properties and limit behavior of the coefficients in the recurrence relation satisfied by multiple orthogonal polynomials were studied, including as particular cases Angelescu and Nikishin systems. There it was noticed that for the Hessenberg operators considered in this paper:
\emph{
``This is a non-symmetric $(m + 2)$ diagonal lower Hessenberg matrix. A very important point is that the spectral properties of this operator are closely connected with the asymptotic properties of the multiple orthogonal polynomials (see [4])
In particular, one can consider the system of measures $\mu_1 , \mu_2 , \dots , \mu_m$ generating the relations (3) as the system of spectral measures of the associated Hessenberg operator. This implies the possibility of stating and investigating direct and inverse spectral and scattering problems for this class of operators using advanced results for multiple orthogonal polynomials.''}
Higher order recurrence relations, associated with similar Hessenberg matrices with all diagonals being zero but for the first superdiagonal and the lowest subdiagonal where discussed in \cite{Aptekarev_Kaliaguine_Saff}, associated with a star-like set $S$. There, positive measures $\d\mu_j$ supported in this star-like set and corresponding multiple orthogonal polynomials where found. Also in \cite{Beckermann_Osipov} certain operator classes represented by banded infinite matrices where studied and a characterization of the corresponding resolvent set in terms of polynomial solutions of the underlying higher order recurrence relations was given. That allowed for the description of some asymptotic behavior of the corresponding systems of multiple orthogonal polynomials. 
Finally in \cite{KaliaguineII} a number of Markov functions of positive weights supported on disjoint real intervals, satisfying a Szeg\H{o} condition, and the associated set of Angeslescu multiple orthogonal polynomials was considered and it was shown that the corresponding recurrence Hessenberg operator is a perturbation of an operator with a purely periodic matrix. 

 Since the publication of the keystone monograph \cite{Gantmacher-Krein} it is well known that the feature of being oscillatory of a Jacobi matrix is important in the study of the spectral points of a Jacobi operator. They are also instrumental in the study of the orthogonal polynomials sequences used to define the spectral measure of the operator.

%\enlargethispage{.25cm}

%\subsection{Oscillatory matrices}

%\textcolor{red}{Given a matrix $A$ we have the natural concepts of submatrix and minor.
%For two subsets of indexes $\boldsymbol\alpha=\{ \alpha_1, \alpha_2,\dots, \alpha_r\}$ and $\boldsymbol\beta=\{\betT_1,\betT_2,\dots,\betT_s\}$ the submatrix $A[\boldsymbol \alpha,\boldsymbol \beta]$ is the one obtained from $A$ by keeping only those entries at the crossing of the rows indicated by $\boldsymbol{\alpha}$ and columns by $\boldsymbol{\beta}$. When $\boldsymbol\alpha=\boldsymbol{\beta}$, we denote the submatrix by $A[\boldsymbol \alpha]$ and call it a principal submatrix. A contiguous subset is of the form $\boldsymbol\alpha=\{ \alpha_1, \alpha_2,\dots, \alpha_r\}$,
%%\subset\{0,1,\dots,N-1\}$, i.e., 
%$ \alpha_k= \alpha_1+k-1$ for $k\in\{1,\dots,r\})$. Leading principal submatrices are $A[\{1,2,\dots,j\}]$. The determinants of these submatrices are called minors (obviously we need $r=s$), principal minors and leading principal minors, respectively. The complementary submatrix $A(\boldsymbol \alpha,\boldsymbol \beta)$ is obtained from $A$ by removing the rows indicated by $\boldsymbol{\alpha}$ and columns by $\boldsymbol{\beta}$. When $\boldsymbol\alpha=\boldsymbol{\beta}$, we denote the complementary submatrix~$A(\boldsymbol \alpha)$.}
%

In this paper we will be dealing with totally non negative matrices and oscillatory matrices and, consequently, we require of some definitions and properties that we are about to present succinctly.

Totally nonnegative (TN) matrices are those with all their minors nonnegative \cite{Fallat-Johnson,Gantmacher-Krein}, and the set of nonsingular TN matrices is denoted by InTN. Oscillatory matrices \cite{Gantmacher-Krein} are totally nonnegative, irreducible \cite{Horn-Johnson} and nonsingular. Notice that the set of oscillatory matrices is denoted by IITN (irreducible invertible totally nonnegative) in \cite{Fallat-Johnson}. An oscillatory matrix~$A$ is equivalently defined as a totally nonnegative matrix $A$ such that for some $n$ we have that $A^n$ is totally positive (all minors are positive). From Cauchy--Binet Theorem one can deduce the invariance of these sets of matrices under the usual matrix product. Thus, following \cite[Theorem 1.1.2]{Fallat-Johnson} the product of matrices in InTN is again InTN (similar statements hold for TN or oscillatory matrices).

We have the important result:
\begin{teo}[Gantmacher--Krein Criterion] \cite[Chapter 2, Theorem 10]{Gantmacher-Krein}. \label{teo:Gantmacher--Krein Criterion}
 A~totally non negative matrix $A$ is oscillatory if and only if it is nonsingular and the elements at the first subdiagonal and first superdiagonal are positive.
\end{teo}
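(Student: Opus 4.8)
The plan is to reduce the criterion to the assertion that, for a nonsingular totally nonnegative matrix, irreducibility is equivalent to positivity of all the first-subdiagonal and first-superdiagonal entries. Granting this, the theorem is immediate from the definition recalled above (oscillatory $=$ totally nonnegative $+$ nonsingular $+$ irreducible): if $A$ is totally nonnegative, nonsingular and its two adjacent diagonals are positive, then by the reduced statement $A$ is irreducible, hence oscillatory; conversely, an oscillatory $A$ is nonsingular and irreducible by definition, so again by the reduced statement its adjacent diagonals are positive. Thus everything comes down to the two implications of the reduced statement, under the standing hypotheses that $A$ is totally nonnegative and nonsingular.

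One implication is elementary. If $a_{i,i+1}>0$ and $a_{i+1,i}>0$ for $i=1,\dots,n-1$, then the directed graph of $A$ carries, at every vertex, edges to both neighbours $i\pm1$; hence any two vertices are joined by a path that moves one step at a time, the graph is strongly connected, and $A$ is irreducible (here total nonnegativity and nonsingularity are not even used).

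The reverse implication is the substantive one; equivalently, in contrapositive form, a nonsingular totally nonnegative matrix with a vanishing entry on an adjacent diagonal is reducible. The technical core — the step I expect to be the main obstacle — is a zero-propagation lemma: if $A$ is nonsingular totally nonnegative and $a_{i+1,i}=0$, then the whole rectangular block $A[\{i+1,\dots,n\}\mid\{1,\dots,i\}]$ vanishes. I would prove this by induction on the row index, feeding on $2\times2$-minor inequalities: nonsingularity supplies a positive main diagonal and the absence of zero rows and columns (standard facts for nonsingular totally nonnegative matrices); the minor on rows $\{i,i+1\}$ and columns $\{s,i\}$ with $s<i$ equals $-a_{ii}\,a_{i+1,s}\ge0$, forcing $a_{i+1,s}=0$, so row $i+1$ vanishes on columns $1,\dots,i$; and if rows $i+1,\dots,r$ already vanish there, then row $r$, not being a zero row, has a positive entry $a_{r,t}$ with $t>i$, and the minor on rows $\{r,r+1\}$ and columns $\{s,t\}$ with $s\le i$ equals $-a_{r,t}\,a_{r+1,s}\ge0$, forcing $a_{r+1,s}=0$. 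Once this block is zero, $\operatorname{span}(e_1,\dots,e_i)$ is $A$-invariant, so $A$ is reducible. The case $a_{i,i+1}=0$ follows by applying the same argument to the transpose $A^{\top}$, which is again nonsingular and totally nonnegative and has $(A^{\top})_{i+1,i}=a_{i,i+1}$.

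Finally, it is worth noting that if one works instead from the equivalent definition of an oscillatory matrix — totally nonnegative with $A^{m}$ totally positive for some $m$ — then the ``if'' direction is no longer a triviality and carries the real depth: one must show that a nonsingular totally nonnegative $A$ with positive neighbouring diagonals has $A^{n-1}$ totally positive. The route is Cauchy--Binet, $\det(A^{m})[I\mid J]=\sum\det A[I\mid K_1]\,\det A[K_1\mid K_2]\cdots\det A[K_{m-1}\mid J]$ over chains of index sets of the common cardinality, every summand being nonnegative, so it suffices to produce one chain of strictly positive minors; a minor of a nonsingular totally nonnegative matrix is positive exactly when the entries on the diagonal of the chosen submatrix are positive, which holds for any pair of index sets whose sorted entries differ pointwise by at most one, since $a_{kl}>0$ whenever $|k-l|\le1$; and the sorted set $I$ can be shuttled to $J$ inside $\{1,\dots,n\}$ by moves displacing each element by at most one and keeping the elements distinct, in at most $n-1$ steps. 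Hence every minor of $A^{n-1}$ is positive. On this route the obstacle is the combinatorial description of the strictly positive minors of a nonsingular totally nonnegative matrix together with the bound on the number of shuttling steps — both classical, going back to Gantmacher--Krein and revisited in Fallat--Johnson.
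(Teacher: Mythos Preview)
The paper does not prove this theorem at all: it is stated as a classical result and simply attributed to \cite[Chapter 2, Theorem 10]{Gantmacher-Krein}, with no argument supplied. So there is no in-paper proof to compare your proposal against.

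That said, your proposal is a correct and well-organized account of the standard proof. The reduction to ``irreducible $\Leftrightarrow$ positive adjacent diagonals for nonsingular TN matrices'' is exactly the right move given the definition the paper adopts (oscillatory $=$ TN $+$ nonsingular $+$ irreducible). Your zero-propagation argument is sound: the $2\times2$ minor computations are correct, the use of positivity of the main diagonal (a standard fact for InTN matrices) is legitimate, and the induction pushes the zero block down row by row as claimed; the transpose handles the superdiagonal case. One small imprecision in your sketch of the alternative route: the statement ``a minor of a nonsingular totally nonnegative matrix is positive exactly when the entries on the diagonal of the chosen submatrix are positive'' is not literally an if-and-only-if in general; what you actually need (and what is true) is the one direction that if the sorted index sets $I$ and $J$ satisfy $|i_k-j_k|\le 1$ for all $k$, then $A[I\mid J]>0$. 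That suffices for the Cauchy--Binet chain argument, and your step-count bound of $n-1$ is the classical one.
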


The following theorems are extracted from \cite{Fallat-Johnson}, see also \cite{Gantmacher-Krein}.

%The first results deals with LU factorization
%\begin{teo}[Whitney]\label{theorem:Whitney} \cite[Theorem 2.2.1]{Fallat-Johnson}
% Let $A=(T_{i,j})$ be such that $T_{k,1}>0$ and $T_{l,1}=0$ for $l\geqslant k+2$. Then , $A$ is totally nonnegative if and only if $L_{k+1}(-\frac{T_{k+1,1}}{T_{k,1}})A$ is nonnegative.
%\end{teo}

%\begin{teo}[Standard elementary bidiagonal factorization]\label{theorem:bidiagonal factorization}\cite[Theorem 2.2.2]{Fallat-Johnson}
% Any invertible totally nonnegative matrix $A\in\R^{N\times N}$ can be factorized as
% \begin{multline}\label{eq:fac}
% A= \big(L_N(\mathscr l_k)L_{N-1}(\mathscr l_{k-1})\cdots L_2(\mathscr l_{k-N+2})\big)\big(L_N(\mathscr l_{k-N+1})\cdots L_3(\mathscr l_{k-2N+4})\big)\cdots L_N(\mathscr l_1) \\D U_N(u_1) \big(U_{N-1}(u_2)U_{N}(u_3)\big)\cdots\big(U_2(u_{k-N+2})\cdots U_{N-1}(u_{k-1})U_N(u_k))\big),
% \end{multline}
% with $k=\binom{N}{2}$, $\mathscr l_j,u_j\geqslant 0$ for all $i,j\in\{1,\dots,k\}$, and $D=\diag(d_1\dots,d_N)$, with all $d_i>0$.
% A matrix $A\in\R^{N\times N}$ is totally positive if it can be written as in \eqref{eq:fac} where now $\mathscr l_j,u_j> 0$ for all $i,j\in\{1,\dots,k\}$ and $D$ is positive.
%\end{teo}
%One finds the following spectral properties\cite[Theorem 5.2.1]{Fallat-Johnson}:
\begin{teo}[Eigenvalue] \cite[Theorem 5.2.1]{Fallat-Johnson}\label{teo:eigennvalues_TN} 
Given an oscillatory matrix $A\in\R^{N\times N}$~ the eigenvalues of $A$ are $N$ distinct positive numbers. 
\end{teo}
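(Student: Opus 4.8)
The plan is to reduce the statement to the Perron--Frobenius theorem by passing to compound matrices. For $1\le k\le N$ write $C_k(A)\in\R^{\binom{N}{k}\times\binom{N}{k}}$ for the $k$-th compound matrix of $A$, whose entries are the $k\times k$ minors of $A$, with rows and columns indexed in lexicographic order by the $k$-subsets of $\{1,\dots,N\}$. Two standard facts will be used throughout: first, by the Cauchy--Binet formula $C_k(AB)=C_k(A)C_k(B)$, so that $C_k(A^q)=C_k(A)^q$ for every $q\in\N$; second, if $\lambda_1,\dots,\lambda_N$ are the eigenvalues of $A$ (listed with algebraic multiplicity) then the eigenvalues of $C_k(A)$ are precisely the $\binom{N}{k}$ products $\lambda_{i_1}\cdots\lambda_{i_k}$ with $1\le i_1<\dots<i_k\le N$ (again with multiplicity), as one checks by triangularizing $A$ and applying $C_k$.

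First I would show that every $C_k(A)$ is a primitive nonnegative matrix. Since $A$ is totally nonnegative, all its minors are $\ge0$, hence $C_k(A)\ge0$ entrywise; since $A$ is oscillatory there is $q\in\N$ with $A^q$ totally positive, and then $C_k(A)^q=C_k(A^q)>0$ entrywise, which is exactly primitivity of $C_k(A)$. By the Perron--Frobenius theorem for primitive matrices, $C_k(A)$ therefore has a real eigenvalue $\rho_k>0$ that is algebraically simple and strictly larger in modulus than every other eigenvalue of $C_k(A)$.

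Then I would run an induction on $k=1,\dots,N$ proving, after ordering the eigenvalues of $A$ so that $|\lambda_1|\ge\cdots\ge|\lambda_N|$ (note $|\lambda_N|>0$ because $\det A\ne0$), that in fact $\lambda_1>\lambda_2>\cdots>\lambda_k>0$, and $\lambda_k>|\lambda_{k+1}|$ when $k<N$. For $k=1$ this is just the Perron--Frobenius statement applied to $C_1(A)=A$. For the inductive step, using the hypothesis $\lambda_1>\cdots>\lambda_{k-1}>|\lambda_k|$, the eigenvalues $\lambda_{i_1}\cdots\lambda_{i_k}$ of $C_k(A)$ of maximal modulus are exactly those indexed by $\{1,\dots,k-1,m\}$ with $|\lambda_m|=\max_{j\ge k}|\lambda_j|$, and that maximal modulus equals $\lambda_1\cdots\lambda_{k-1}\,|\lambda_k|$. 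If this maximum were attained at two distinct indices $m$, or if $\lambda_k$ were non-real (so that $\bar\lambda_k$ is another eigenvalue of $A$ of the same modulus), or if $\lambda_k$ had multiplicity $\ge2$, then $C_k(A)$ would have either two distinct eigenvalues of maximal modulus or a dominant eigenvalue of algebraic multiplicity $\ge2$, contradicting the conclusion of the previous paragraph. Hence $\lambda_k$ is real and simple with $\lambda_k>|\lambda_{k+1}|$ for $k<N$; moreover the Perron root is $\rho_k=\lambda_1\cdots\lambda_{k-1}\lambda_k>0$, and since $\lambda_1\cdots\lambda_{k-1}>0$ this forces $\lambda_k>0$. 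After the step $k=N$ we obtain $\lambda_1>\cdots>\lambda_N>0$, which is the assertion.

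The only delicate point is the bookkeeping in the inductive step: one must check carefully that a tie in modulus among $\lambda_k,\lambda_{k+1},\dots$, a complex-conjugate pair, or a repeated eigenvalue each genuinely forces either two distinct eigenvalues of $C_k(A)$ of maximal modulus or a repeated dominant eigenvalue, so that primitivity of $C_k(A)$ is contradicted in every case. Everything else — the Cauchy--Binet identity, the spectral description of compound matrices, and the Perron--Frobenius theorem for primitive matrices — is classical.
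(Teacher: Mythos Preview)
Your argument is correct and is precisely the classical Gantmacher--Krein proof via compound matrices and the Perron--Frobenius theorem; this is the route taken in the cited reference \cite[Theorem~5.2.1]{Fallat-Johnson}. The paper itself does not supply a proof of this statement---it merely quotes the result from Fallat--Johnson---so there is nothing further to compare.
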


We also need to introduce the following notation.
We define the total sign variation of a totally nonzero vector (no entry of the vector $u$ is zero) as $v(u)=\text{cardinal}\{i\in\{1,\dots,n-1\} : u_iu_{i+1}<0\}$.
 For a general vector $u\in\R^n$ we define $v_m(u)$ ($v_M(u)$) as the minimum (maximum) value $v(y)$ among all totally nonzero vectors $y$ that coincide with $u$ in its nonzero entries. For $v_m(u)=v_M(u)$ we write $v(u)\coloneq v_m(u)=v_M(u)$.

\begin{teo}[Eigenvectors] 
\label{teo:eigenvectorII}
 Let $A\in\R^{N\times N}$ be an oscillatory matrix, and $u^{(k)}$ ($w^{(k)}$) the right (left) eigenvector corresponding to $ \lambda_k$, the $k$-th largest eigenvalue of $A$. Then
\begin{enumerate}
 \item \cite[Theorem 5.3.3]{Fallat-Johnson} We have $v_m(u^{(k)})=v_M(u^{(k)})= v(u^{(k)})=k-1$ ($v_m(w^{(k)})=v_M(w^{(k)})= v(w^{(k)})=k-1$). Moreover, the first and last entry of $u^{(k)}$ ($w^{(k)})$ are nonzero, and $u^{(1)}$ and $u^{(N)}$ ($w^{(1)}$ and $w^{(N)}$) are totally nonzero; the other vectors may have a zero entry.
 \item From Perron--Frobenius theorem we know that $u^{(1)}$ ($w^{(1)}$) can be chosen to be entrywise positive, and that the other eigenvectors $u^{(k)}$ ($w^{(k)}$), $k=2,\dots,n$ have at least one entry sign change. In fact, $u^{(N)}$ ($w^{(N)}$) strictly alternates the sign of its entries.
% \item \cite[Theorem 5.3.5]{Fallat-Johnson} The $k$ nodes of $u^{(k+1)}$ ($w^{(k+1)}$) alternate with the $k-1$ nodes of $u^{(k)}$ ($w^{(k)}$).

\end{enumerate}
\end{teo}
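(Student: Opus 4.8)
The plan is to reduce everything to the totally positive case and then run the classical sign-variation machinery together with Perron--Frobenius. Since $A$ is oscillatory there is an $m$ with $A^{m}$ totally positive; $A$ and $A^{m}$ have the same eigenvectors, and by Theorem~\ref{teo:eigennvalues_TN} the $k$-th largest eigenvalue of $A^{m}$ is $\lambda_{k}^{m}$, so it suffices to prove all the assertions for a totally positive matrix, still written $A$. The basic tool is the variation-diminishing property of totally positive matrices (\cite{Gantmacher-Krein}, \cite[Chapter 5]{Fallat-Johnson}): $v_{M}(Au)\le v_{m}(u)$ for every $u\ne 0$, together with the description of its equality case. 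Applying this to $Au^{(k)}=\lambda_{k}u^{(k)}$ gives $v_{M}(u^{(k)})\le v_{m}(u^{(k)})$, whence $v_{m}(u^{(k)})=v_{M}(u^{(k)})$, their common value being $v(u^{(k)})$; running the identical argument for $A^{\top}$ (again oscillatory, with the same eigenvalues and with the left eigenvectors of $A$ as its right eigenvectors) does the same for $w^{(k)}$.

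To identify the value $v(u^{(k)})=k-1$ I would pass to compound matrices. By Cauchy--Binet the $k$-th compound $C_{k}(A)$ is totally positive and $C_{k}(A^{m})=C_{k}(A)^{m}$ is a positive matrix, so $C_{k}(A)$ is primitive and Perron--Frobenius gives it a simple dominant eigenvalue $\lambda_{1}\cdots\lambda_{k}$ with a strictly positive eigenvector; that eigenvector is the decomposable multivector $u^{(1)}\wedge\cdots\wedge u^{(k)}$, whose coordinates are the $k\times k$ minors of the matrix with columns $u^{(1)},\dots,u^{(k)}$. Their positivity means that every nonzero $u\in\operatorname{span}(u^{(1)},\dots,u^{(k)})$ has $v_{M}(u)\le k-1$, and dually every nonzero $u\in\operatorname{span}(u^{(k)},\dots,u^{(N)})$ has $v_{m}(u)\ge k-1$ --- the two halves being the standard sandwiching estimate for oscillatory matrices (\cite{Gantmacher-Krein}). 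Since $u^{(k)}$ lies in both subspaces, $k-1\le v_{m}(u^{(k)})\le v_{M}(u^{(k)})\le k-1$, so $v(u^{(k)})=k-1$, and likewise $v(w^{(k)})=k-1$.

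It remains to control the extremal entries. For $k=1$ the vector $u^{(1)}$ is the Perron eigenvector of the positive matrix $A^{m}$, hence can be chosen entrywise positive; then $v(u^{(1)})=0$ and $u^{(1)}$ is totally nonzero, and the same for $w^{(1)}$. Since $u^{(k)}$ is orthogonal to the strictly positive vector $w^{(1)}$ (and $w^{(k)}$ to $u^{(1)}$) for $k\ge2$, those eigenvectors must change sign, which is part~(2). For $k=N$ we have $v(u^{(N)})=N-1$ and $u^{(N)}\in\R^{N}$: since $v_{m}(u^{(N)})$ equals the number of sign changes of the subsequence of nonzero entries of $u^{(N)}$, a vanishing entry would force $v_{m}(u^{(N)})\le N-2$, so $u^{(N)}$ is totally nonzero, and $N-1$ sign changes among $N$ nonzero entries mean consecutive signs alternate. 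For $1<k<N$ one needs only that the first and last entries of $u^{(k)}$ do not vanish; I would obtain this from the sharp form of the variation-diminishing inequality, which in the equality case $v_{M}(Au^{(k)})=v_{m}(u^{(k)})$ forces the first and last components of $Au^{(k)}=\lambda_{k}u^{(k)}$ to be nonzero with prescribed signs relative to a minimal-variation completion of $u^{(k)}$; the analogue for $w^{(k)}$ follows by transposition. I expect this last point --- squeezing nonvanishing of the boundary entries out of the equality case, while genuinely allowing interior zeros (which explains the final ``may have a zero entry'') --- to be the only delicate step; everything else is the standard package of passing to a totally positive power and feeding compound matrices into Perron--Frobenius.
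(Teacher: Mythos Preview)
The paper does not supply its own proof of this theorem: it is stated as a quotation of known results, with part (i) attributed to \cite[Theorem 5.3.3]{Fallat-Johnson} and part (ii) to Perron--Frobenius. Your sketch is precisely the classical Gantmacher--Krein argument that underlies those references---pass to a totally positive power, use the variation-diminishing inequality $v_M(Au)\le v_m(u)$ to force $v_m(u^{(k)})=v_M(u^{(k)})$, and then pin the common value to $k-1$ via Perron--Frobenius on the compound matrices $C_k(A)$---so in substance you are reproducing the proof the paper is citing rather than offering an alternative.

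The one place where your write-up is soft is the nonvanishing of the first and last entries for $1<k<N$. Invoking ``the sharp form of the variation-diminishing inequality in the equality case'' is the right direction, but as stated it is an appeal rather than an argument. A clean way to close this (and the way it is done in \cite{Gantmacher-Krein}) is to use the strict positivity of the Perron eigenvector of $C_k(A)$ directly: the minors of $[u^{(1)}\,\cdots\,u^{(k)}]$ on the index sets $\{1,\dots,k\}$ and $\{N-k+1,\dots,N\}$ are strictly positive, and combining this with the analogous minors for $[u^{(k)}\,\cdots\,u^{(N)}]$ forces $u^{(k)}_1\neq 0$ and $u^{(k)}_N\neq 0$. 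Alternatively, if $u^{(k)}_1=0$ one checks that $v_M(u^{(k)})\ge v_m(u^{(k)})+1$ by choosing the free sign at the first slot in two different ways, contradicting the equality already established. Either route is short; you should make one of them explicit rather than leaving it as an expectation.
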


 In this paper we prove a spectral Favard theorem for a bounded banded Hessenberg matrix that admits a positive bidiagonal factorization (being therefore an oscillatory matrix) and we show the existence of spectral positive measures for which the recursion polynomials happen to be multiple orthogonal polynomials. Thus, we find sufficient conditions for the existence of spectral positive measures. 
However, the Jacobi--Piñeiro's multiple orthogonal polynomials show that these are sufficient but not necessary conditions. Also a multiple Gauss quadrature is proven and degrees of precision are determined explicitly. 
%We present this result for simplicity the case of two measures but clearly the same idea holds for any finite numbers of measures.
 
 Our motivation for this work initiated in our previous research on Markov chains beyond birth and death and multiple orthogonal polynomials, see the papers \cite{nuestro2,nuestro1}, in where the transition matrices were Hessenberg matrices with more than three diagonals. In those papers we studied two specific cases, in \cite{nuestro2} we dealt with the Jacobi--Piñeiro multiple orthogonal polynomials, see \cite[Section 23.3.2]{Ismail} and in \cite{nuestro1} the hypergeometric multiple orthogonal polynomials recently presented in \cite{Lima-Loureiro}. 
 
 However, the approach of those two papers was the reverse to that initially presented by Karlin and McGregor in their seminal paper \cite{Karlin-McGregor}. From the recursion matrix of the given examples of multiple orthogonal polynomials we gave a procedure to find a transition matrix and studied the corresponding Markov chains. While in \cite{Karlin-McGregor}, for birth and death Markov chains, a tridiagonal transition matrix was presented and the previously mentioned spectral approach was used to construct a set of {random walk orthogonal polynomials} and give spectral representation formulas in terms of the corresponding polynomials for different important probabilistic objects. Indeed, this was a very important question, pointed to the authors by Alberto Grünbaum, that deserved an answer. What about the spectral theory for our examples of {random walk multiple orthogonal polynomials}? Our answer to this question is the spectral Favard theorem we present in this paper for bounded banded Hessenberg matrices that admit a positive bidiagonal factorization. Let us notice that all the cases we have mentioned, the tridiagonal case, and the Jacobi--Piñeiro in the semi-band and hypergeometric cases fit in this subset of oscillatory matrices.
 Consequently, using this new tool we start as Karlin and McGregor from the transition matrix, assuming that has a positive stochastic bidiagonal factorization and derive from it the spectral representation of the Markov chain.
 
The paper is organized is two parts. In the first part, that is divided in two sections, we will present the main result of the paper, a spectral Favard theorem for bounded banded lower Hessenberg matrices which admit a positive bidiagonal factorization, and in the second part we present the application of this spectral Favard theorem for Markov chains with $(p+2)$-diagonal transition matrices, i.e. beyond birth and death for $p>1$, that admit a positive stochastic bidiagonal factorization.

\section{Banded Hessenberg matrices and recursion polynomials}

\subsection{Recursion polynomials of type I and II for banded Hessenberg matrices}
Let us consider the following banded monic lower Hessenberg semi-infinite matrix
\begin{align}\label{eq:monic_Hessenberg}
 T&=
% \left[\begin{NiceMatrix}
% T_{0,0} & 1& 0 & \Cdots &\\
% T_{1,0} & T_{1,1} & 1&\ddots&& \\
% \Vdots & \Ddots & \Ddots & \Ddots \\
% T_{p,0}& \Cdots & &T_{p,p}& 1 &\\
% \Vdots & \Ddots & \Ddots & \Ddots \\
%0 & T_{n,n-p}& \Cdots & & T_{n,n} &1\\
%\Vdots
% & 
%\Ddots 
% &\Ddots&&\Ddots&\Ddots
% \end{NiceMatrix}\right], 
\begin{bNiceMatrix}%[columns-width = .5cm]
 T_{0,0} & 1& 0 & \Cdots& &&&\phantom{X}\\
 T_{1,0} & T_{1,1} & 1&& &&&\phantom{X}\\
 \Vdots & & & &&&&\phantom{X}\\%[5pt]
 T_{p,0}& & & & &&&\phantom{X}\\%[10pt]
 0&&&&&&&\phantom{X}\\
 \Vdots & & &&&&&\phantom{X}\\
 \phantom{X} &\phantom{X} &\phantom{X}&\phantom{X}&\phantom{X}& \phantom{X}& \phantom{X}&\phantom{X}
\CodeAfter\line{4-1}{7-4} \line{5-1}{7-3}\line{1-3}{5-8}\line{2-3}{6-8}\line{2-2}{7-8}
%\SubMatrix[{1-1}{6-8}]
\end{bNiceMatrix}
\end{align}
and assume that $T_{n,n-p}\not =0, n=p, p+1, \dots$.
The term \emph{monic} refers to the normalization to unity in the first superdiagonal,

\begin{defi}[Recursion polynomials of type II]\label{def:typeII}
The type II recursion vector of polynomials
\begin{align*}
B(x)&=
\left[\begin{NiceMatrix}[columns-width = auto]
B_0(x) & 
B_1(x) & \Cdots 
\end{NiceMatrix}\right]^\top, & \deg B_n=n,
\end{align*}
is determined by the following eigenvalue equation
\begin{align*}%\label{eq:recurrence_J}
TB(x)=x B(x).
\end{align*}
Uniqueness is ensured by taking as initial condition $B_0=1$.
We call the components $B_n$ type II recursion polynomials. 
%One obtains that
%$B_1=x- T_{0,0}$, $B_2=(x-T_{1,1})(x-T_{0,0})-T_{1,0}$, and 
Higher degree recursion polynomials are constructed by means of the $(p+2)$-term recurrence relation
\begin{align}\label{eq:recurrence_B}
B_{n+1}&=(x-T_{n,n})B_n-T_{n,n-1}B_{n-1}-\dots -T_{n,n-p} B_{n-p}, & n&\in\N_0, 
\end{align}
where $T_{n,j}=0$ when $j<0$.
\end{defi}

\begin{defi}[Recursion polynomials of type I]\label{def:typeI}
Dual to the polynomial vector $B(x)$ we consider the following polynomial dual vectors
\begin{align*}
A^{(a)}(x)&=\left[\begin{NiceMatrix}
 A^{(a)}_0(x) & A^{(a)}_1(x)& \Cdots
 \end{NiceMatrix}\right]
, 
&a&\in\{ 1, \dots, p\},
\end{align*}
that are left eigenvectors of the semi-infinite matrix $T$, i.e.,
\begin{align*}
A^{(a)}(x) T &=x A^{(a)}(x), &a&\in\{ 1, \dots, p\}.
\end{align*} 
The initial conditions, that determine these polynomials uniquely, are taken as
\begin{align}
\label{eq:initcondtypeI}
\begin{cases}
A^{(1)}_0=1 , \\
A^{(1)}_1= \nu^{(1)}_1 , \\
 \hspace{.895cm} \vdots \\
A^{(1)}_{p-1}=\nu^{(1)}_{p-1} ,
\end{cases}
&&
\begin{cases}
A^{(2)}_0=0 , \\
A^{(2)}_1= 1 , \\
A^{(2)}_2= \nu^{(2)}_2 , \\
 \hspace{.895cm} \vdots \\
A^{(2)}_{p-1}=\nu^{(2)}_{p-1} ,
\end{cases}
&& \cdots &&
\begin{cases}
A^{(p)}_0 =0 , \\
 \hspace{.915cm} \vdots \\
A^{(p)}_{p-2} = 0 , \\
A^{(p)}_{p-1} = 1,
\end{cases}
\end{align}
with $\nu^{(i)}_{j}$ being arbitrary constants. 
We will define the initial condition matrix 
\begin{align}
\label{eq:mic}
\nu\coloneq \begin{bNiceMatrix}%[columns-width=2pt]
 1& 0 & \Cdots& && 0 \\
\nu^{(1)}_1 & 1 & \Ddots&& & \Vdots \\
 \Vdots & \Ddots & \Ddots& && \\
&&&& &\\&&&&&0\\
\nu^{(1)}_{p-1} &\Cdots& && \nu^{(p-1)}_{p-1}& 1 
 \end{bNiceMatrix} .
\end{align}
%Then, from the first the relation
%\begin{align*}
% T_{0,0}A^{(a)}_0+T_{1,0}A^{(a)}_{1}+\dots + T_{p,0}A^{(a)}_{p}&=xA^{(a)}_0, & a&\in\{1,\ldots, p\},
%\end{align*}
%we get
%$A^{(1)}_2=\frac{x}{T_2}-\frac{c_0+b_1\nu}{T_2}$ and
%$A^{(2)}_2=-\frac{b_1}{T_2}$.
The other polynomials in these sequences are determined by the following $(p+2)$-term recursion relation
\begin{align}\label{eq:recursion_dual_A}
A^{(a)}_{n-1} +T_{n,n}A^{(a)}_{n} + \cdots + T_{n+p,n}A^{(a)}_{n+p} = x A^{(a)}_{n}, && n \in\{0,1,\ldots\}, && a \in\{1,\ldots, p\}, && A_{-1}^{(a)} = 0.
\end{align}
\end{defi}

%For example, one finds
%\begin{align*}
%A^{(1)}_3 T_3&=-A^{(1)}_2b_2+A^{(1)}_1(x-c_1)-A^{(1)}_0=
% -b_2\Big(\frac{x}{T_2}-\frac{c_0+b_1\nu}{T_2}\Big)+\nu (x-c_1)-1,\\
%A^{(2)}_3 T_3 &=-A^{(2)}_2b_2+A^{(2)}_1(x-c_1)-A^{(2)}_0=
% b_2\frac{b_1}{T_2}+x-c_1. 
%\end{align*}

%\begin{enumerate}
%\item
%One can check that, $\deg A^{(1)}_{kp+j} = \deg A^{(2)}_{2n+1}=n$, and that 
%$\deg A^{(2)}_{2n-1}=\deg A^{(1)}_{2n}=n-1$.
%
%\item
%Frequently the following notation is used for these recursion polynomials of type I
%\begin{align*}
%A^{(1)}_{k+1,k,\dots,k}&\coloneq A^{(1)}_{kp}, &A^{(2)}_{k+1,k+1,k,\dots,k}&\coloneq A^{(2)}_{kp},& &A^{(p)}_{k+1,k+1,k,\dots,k}&\coloneq A^{(2)}_{kp},&
%A^{(1)}_{n+1,n}&\coloneq A^{(1)}_{2n}, &A^{(2)}_{n,n}&\coloneq A^{(2)}_{2n}.
%\end{align*} 
%\end{enumerate}

Let us introduce the products $H_0=1$, $H_1=T_{p,0}$, $H_2=T_{p,0}T_{p+1,1}$, $H_3=T_{p,0}T_{p+1,1}T_{p+2,2}$, and in general 
\begin{align*}
H_n\coloneq T_{p,0}T_{p+1,1}\cdots T_{n-1+p,n-1}.
\end{align*}
Hence, we have for the entries in the lowest nonzero subdiagonal
\begin{align*}
 T_{n+p,n}=\frac{H_{n+1}}{H_n}.
\end{align*}
We also use the notation
\begin{align*}
 h_n\coloneq(-1)^{(p-1)n}H_n,
\end{align*}
so that $ h_{n+1}= (-1)^{p-1} h_{n} T_{n+p,n} $,
$ n \in\N_0$,
$h_0 \coloneq 1$.

These dual polynomials are, in a sense, more fundamental to the banded Hessenberg matrix~$T$ than the sequence $\{B_n\}_{n=0}^\infty$, as we can reconstruct this sequence with determinantal formulas involving the sequences $\big\{A_n^{(1)}\big\}_{n=0}^\infty,\dots, \big\{A_n^{(p)}\big\}_{n=0}^\infty$.
\begin{pro}\label{pro:determinatal_B_A} \cite[Lemma 2.4]{Coussement-VanAssche}
 For $n\in\N_0$, the following formula
 \begin{align}\label{eq:B_n-A_n}
 B_n&=h_n\begin{vNiceMatrix}
 A^{(1)}_n & \Cdots & A^{(p)}_n \\[2pt]
 \Vdots & \Ddots & \Vdots \\[2pt]
 A^{(1)}_{n+p-1} & \Cdots & A^{(p)}_{n+p-1}
 \end{vNiceMatrix},
%, 
%&
% h_{n+1}\coloneq (-1)^{p-1} h_{n} T_{n+p,n} ,
% && n = 0,1,\ldots,
% && h_0 \coloneq 1 ,
 \end{align}
holds.
\end{pro}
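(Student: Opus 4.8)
The plan is to verify that the right-hand side of \eqref{eq:B_n-A_n}, call it $\tilde{B}_n \coloneq h_n \det\big(A^{(b)}_{n+i-1}\big)_{i,b=1}^{p}$, satisfies the same defining data as $B_n$: namely the $(p+2)$-term recurrence \eqref{eq:recurrence_B} together with the initial condition $B_0 = 1$ (and the implied values $B_1,\dots$ forced by $\deg B_n = n$ and the recurrence with $T_{n,j}=0$ for $j<0$). Since that data determines the sequence $\{B_n\}$ uniquely, establishing both will prove the identity. First I would check the initial conditions: for $n=0$, the matrix inside the determinant is exactly the initial condition matrix $\nu$ of \eqref{eq:mic} (read off from \eqref{eq:initcondtypeI}, using that $A^{(b)}_{i-1}$ for $i-1 < b-1$ vanishes, for $i-1 = b-1$ equals $1$, and for $i-1>b-1$ equals $\nu^{(b)}_{i-1}$), which is lower triangular with unit diagonal, hence $\det \nu = 1$ and $\tilde B_0 = h_0 \cdot 1 = 1$. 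One should also check that $\tilde B_n$ is a polynomial of degree exactly $n$; this follows inductively once the recurrence is in hand, since the leading term is generated by the $x A^{(a)}_n$ contributions on the diagonal.

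The heart of the argument is the recurrence. The key identity to exploit is the dual recursion \eqref{eq:recursion_dual_A}, which I would rewrite as
\begin{align*}
x A^{(a)}_{n} = A^{(a)}_{n-1} + T_{n,n} A^{(a)}_{n} + T_{n+1,n} A^{(a)}_{n+1} + \cdots + T_{n+p,n} A^{(a)}_{n+p}.
\end{align*}
Consider the $(p+1)\times(p+1)$ determinant obtained by adjoining one extra row to the block appearing in $\tilde B_{n}$: specifically the rows indexed $n-1, n, n+1, \dots, n+p$ of the matrix $\big(A^{(b)}_{\bullet}\big)$, but with the top row $A^{(b)}_{n-1}$ replaced by $x A^{(b)}_{n-1}$ — no, more cleanly: form the $(p+1)\times(p+1)$ matrix with rows $(x A^{(b)}_{n}), (A^{(b)}_{n}), (A^{(b)}_{n+1}),\dots,(A^{(b)}_{n+p-1})$ — this has two rows that become proportional after expanding, so its determinant vanishes, and expanding $x A^{(b)}_n$ via the relation above produces a linear combination of $(p\times p)$ minors that telescopes into \eqref{eq:recurrence_B}. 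In practice the slick route is: multiply the determinant defining $\tilde B_n$ by $x$, distribute $x$ into the first row using \eqref{eq:recursion_dual_A}, and observe that all terms $T_{n+j,n}A^{(b)}_{n+j}$ with $1\le j\le p-1$ in the first row are rows already present below, hence drop out; what survives is $A^{(b)}_{n-1}$ (contributing, after a cofactor/row-shift computation, $T_{n,n-1}\tilde B_{n-1}+\cdots$, i.e. the lower-order terms — here the products $H_n$ and the signs $(-1)^{(p-1)n}$ in $h_n$ are exactly calibrated so the subdiagonal entries $T_{n+p,n}=H_{n+1}/H_n$ convert ratios of $h$'s into the recurrence coefficients) together with $T_{n+p,n}A^{(b)}_{n+p}$ (contributing $\tilde B_{n+1}/h_{n+1}\cdot h_n T_{n+p,n}$, i.e. $\pm \tilde B_{n+1}$), and $T_{n,n}A^{(b)}_n$ (contributing $T_{n,n}\tilde B_n$). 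Rearranging gives precisely \eqref{eq:recurrence_B} for $\tilde B$.

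The main obstacle I anticipate is purely bookkeeping: tracking the signs. Each time a row $A^{(b)}_{n+p}$ is cyclically moved into the top position of the $p\times p$ block one picks up a factor $(-1)^{p-1}$, and the definition $h_n = (-1)^{(p-1)n}H_n$ with $h_{n+1} = (-1)^{p-1} h_n T_{n+p,n}$ is engineered to absorb exactly this sign, so the identity $T_{n+p,n} h_n = (-1)^{p-1} h_{n+1}$ is what must be invoked at the critical moment; similarly the lower-order terms require the relations $h_n = h_{n-k} \prod (-1)^{p-1} T_{\bullet}$ to match coefficients $T_{n,n-k}$ for $k=1,\dots,p$. A clean way to organize all of this — and the presentation I would adopt — is to note that \eqref{eq:recursion_dual_A} says the infinite row vectors $A^{(a)}$ are left eigenvectors of $T$, so stacking the $p$ consecutive "windows" of $A^{(1)},\dots,A^{(p)}$ into $p\times p$ matrices $\mathcal{A}_n \coloneq \big(A^{(b)}_{n+i-1}\big)_{i,b}$ turns the dual recurrence into a matrix three-ish-term relation among the $\mathcal{A}_n$; taking determinants and using multiplicativity then yields the scalar recurrence for $\det \mathcal{A}_n$ directly, with the $h_n$ normalization making it monic. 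Since this is exactly \cite[Lemma 2.4]{Coussement-VanAssche}, I would also simply cite that source for the detailed sign verification and present only the conceptual skeleton above.
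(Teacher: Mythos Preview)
Your approach is essentially the same as the paper's: both arguments proceed by verifying the base case via the unitriangular initial condition matrix $\nu$, then checking that the determinantal expression satisfies the type~II recurrence \eqref{eq:recurrence_B} by inserting the factor $x$ (or $x-T_{n,n}$) into the first row and expanding via the dual recursion \eqref{eq:recursion_dual_A}. The one place where your sketch is thinner than the paper is the treatment of the surviving row $A^{(b)}_{n-1}$: this does \emph{not} yield the lower-order terms $T_{n,n-1}\tilde B_{n-1}+\cdots+T_{n,n-p}\tilde B_{n-p}$ by a single ``cofactor/row-shift'' step, but rather requires iterated applications of \eqref{eq:recursion_dual_A} at the shifted indices $n-1,n-2,\dots$, each time eliminating the bottom row $A^{(b)}_{n+p-1-j}$ and peeling off one coefficient $T_{n,n-j}$ together with a residual determinant---this is exactly the cascade the paper writes out in full. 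Since you flag the iteration and defer the sign bookkeeping to \cite{Coussement-VanAssche}, the proposal is correct as a conceptual skeleton.
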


\begin{proof}
For $n=0$, we have
\begin{align*}
B_0
=1
=h_0
\begin{vNiceMatrix}
 A^{(1)}_0 & \Cdots & A^{(p)}_0 \\[2pt]
 \Vdots & \Ddots & \Vdots \\[2pt]
 A^{(1)}_{p-1} & \Cdots & A^{(p)}_{p-1}
 \end{vNiceMatrix}
=
h_0
\begin{vNiceMatrix}
	1& 0 & \Cdots& && 0 \\
	\nu^{(1)}_1 & 1 & \Ddots&& & \Vdots \\
	\Vdots & \Ddots & \Ddots& && \\
	&&&& &\\&&&&&0\\
	\nu^{(1)}_{p-1} &\Cdots& && \nu^{(p-1)}_{p-1}& 1 
 \end{vNiceMatrix}
 ,
\end{align*}
while $n=1$ we find
\begin{align*}
 B_1&=
 (x-T_{0,0}) B_0 =
 (x-T_{0,0}) h_0 
 \begin{vNiceMatrix}
 A^{(1)}_0 & \Cdots & A^{(p)}_0 \\[2pt]
 \Vdots & \Ddots & \Vdots \\[2pt]
 A^{(1)}_{p-1} & \Cdots & A^{(p)}_{p-1}
 \end{vNiceMatrix}
 =
h_0 
 \begin{vNiceMatrix}
 (x-T_{0,0}) A^{(1)}_0 & \Cdots & (x-T_{0,0}) A^{(p)}_0 \\[2pt]
 \Vdots & \Ddots & \Vdots \\[2pt]
 A^{(1)}_{p-1} & \Cdots & A^{(p)}_{p-1}
 \end{vNiceMatrix} \\
& = h_0 
 \begin{vNiceMatrix}
 T_{p,0} A^{(1)}_p & \Cdots & T_{p,0} A^{(p)}_p \\[2pt]
 A^{(1)}_{1} & \Cdots & A^{(p)}_{1} \\[2pt]
 \Vdots & \Ddots & \Vdots \\[2pt]
 A^{(1)}_{p-1} & \Cdots & A^{(p)}_{p-1}
 \end{vNiceMatrix}
 = h_0 T_{p,0} (-1)^{p-1}
 \begin{vNiceMatrix}
 A^{(1)}_{1} & \Cdots & A^{(p)}_{1} \\[2pt]
 \Vdots & \Ddots & \Vdots \\[2pt]
 A^{(1)}_{p-1} & \Cdots & A^{(p)}_{p-1} \\[2pt]
 A^{(1)}_p & \Cdots & A^{(p)}_p
 \end{vNiceMatrix}
 = h_1
 \begin{vNiceMatrix}
 A^{(1)}_{1} & \Cdots & A^{(p)}_{1} \\[2pt]
 \Vdots & \Ddots & \Vdots \\[2pt]
%A^{(1)}_{p-1} & \Cdots & A^{(p)}_{p-1} \\[2pt]
 A^{(1)}_p & \Cdots & A^{(p)}_p
 \end{vNiceMatrix}
 .
\end{align*}
Once we have proven the cases $n=0,1$, let us apply induction. Hence, we assume that \eqref{eq:B_n-A_n} holds for all nonnegative integers up to $n$ and proceed to show that it also holds for $n+1$. From the direct recursion relation \eqref{eq:recurrence_B} and our induction assumption we find
\begin{align*}
B_{n+1}=
 (x-T_{n,n}) h_n\begin{vNiceMatrix}[small]
 A^{(1)}_n & \Cdots & A^{(p)}_n \\[2pt]
 \Vdots & \Ddots & \Vdots \\[2pt]
 A^{(1)}_{n+p-1} & \Cdots & A^{(p)}_{n+p-1}
 \end{vNiceMatrix}
 - T_{n,n-1}
h_{n-1}\begin{vNiceMatrix}[small]
 A^{(1)}_{n-1} & \Cdots & A^{(p)}_{n-1} \\[2pt]
 \Vdots & \Ddots & \Vdots \\[2pt]
 A^{(1)}_{n+p-2} & \Cdots & A^{(p)}_{n+p-2}
 \end{vNiceMatrix}
 - \cdots -
 T_{n,n-p}
h_{n-p}\begin{vNiceMatrix}[small]
 A^{(1)}_{n-p} & \Cdots & A^{(p)}_{n-p} \\[2pt]
 \Vdots & \Ddots & \Vdots \\[2pt]
 A^{(1)}_{n-1} & \Cdots & A^{(p)}_{n-1}
 \end{vNiceMatrix}.
\end{align*}
We notice that, using the recursion \eqref{eq:recursion_dual_A}, we get
\begin{align*}
 (x-T_{n,n}) h_n\begin{vNiceMatrix}[small]
 A^{(1)}_n & \Cdots & A^{(p)}_n \\[2pt]
 \Vdots & \Ddots & \Vdots \\[2pt]
 A^{(1)}_{n+p-1} & \Cdots & A^{(p)}_{n+p-1}
 \end{vNiceMatrix}
 &=
 h_n\begin{vNiceMatrix}[small]
 (x-T_{n,n}) A^{(1)}_n & \Cdots & (x-T_{n,n}) A^{(p)}_n \\[2pt]
 A^{(1)}_{n+1} & \Cdots & A^{(p)}_{n+1} \\[2pt]
 \Vdots & \Ddots & \Vdots \\[2pt]
 A^{(1)}_{n+p-1} & \Cdots & A^{(p)}_{n+p-1}
 \end{vNiceMatrix} =
 h_n\begin{vNiceMatrix}[small]
 A^{(1)}_{n-1} + T_{n+p,n} A^{(1)}_{n+p} & \Cdots & A^{(p)}_{n-1} + T_{n+p,n} A^{(p)}_{n+p} \\[2pt]
 A^{(1)}_{n+1} & \Cdots & A^{(p)}_{n+1} \\[2pt]
 \Vdots & \Ddots & \Vdots \\[2pt]
 A^{(1)}_{n+p-1} & \Cdots & A^{(p)}_{n+p-1}
 \end{vNiceMatrix}
 \\
 & =
h_n T_{n+p,n} (-1)^{p-1} \begin{vNiceMatrix}[small]
 A^{(1)}_{n+1} & \Cdots & A^{(p)}_{n+1} \\[2pt]
 \Vdots & \Ddots & \Vdots \\[2pt]
 A^{(1)}_{n+p-1} & \Cdots & A^{(p)}_{n+p-1} \\[2pt]
A^{(1)}_{n+p} & \Cdots & A^{(p)}_{n+p} \\[2pt]
 \end{vNiceMatrix}
 +
 h_n \begin{vNiceMatrix}[small]
 A^{(1)}_{n-1} & \Cdots & A^{(p)}_{n-1} \\[2pt]
 A^{(1)}_{n+1} & \Cdots & A^{(p)}_{n+1} \\[2pt]
 \Vdots & \Ddots & \Vdots \\[2pt]
 A^{(1)}_{n+p-1} & \Cdots & A^{(p)}_{n+p-1}
 \end{vNiceMatrix} \\
 & = h_{n+1}
 \begin{vNiceMatrix}[small]
 A^{(1)}_{n+1} & \Cdots & A^{(p)}_{n+1} \\[2pt]
 \Vdots & \Ddots & \Vdots \\[2pt]
% A^{(1)}_{n+p-1} & \Cdots & A^{(p)}_{n+p-1} \\[2pt]
A^{(1)}_{n+p} & \Cdots & A^{(p)}_{n+p} \\[2pt]
 \end{vNiceMatrix}
 +
 h_{n-1}
 \begin{vNiceMatrix}[small]
 T_{n-1+p,n-1} A^{(1)}_{n+p-1} & \Cdots & T_{n-1+p,n-1} A^{(p)}_{n+p-1} \\[2pt]
 A^{(1)}_{n-1} & \Cdots & A^{(p)}_{n-1} \\[2pt]
 A^{(1)}_{n+1} & \Cdots & A^{(p)}_{n+1} \\[2pt]
 \Vdots & \Ddots & \Vdots \\[2pt]
 A^{(1)}_{n+p-2} & \Cdots & A^{(p)}_{n+p-2}
 \end{vNiceMatrix} .
\end{align*}
We continue with
\begin{multline*}
%\begin{align*}\hspace*{-1cm}
 \begin{vNiceMatrix}[small]
 T_{n-1+p,n-1} A^{(1)}_{n+p-1} & \Cdots & T_{n-1+p,n-1} A^{(p)}_{n+p-1} \\[2pt]
 A^{(1)}_{n-1} & \Cdots & A^{(p)}_{n-1} \\[2pt]
 A^{(1)}_{n+1} & \Cdots & A^{(p)}_{n+1} \\[2pt]
 \Vdots & \Ddots & \Vdots \\[2pt]
 A^{(1)}_{n+p-2} & \Cdots & A^{(p)}_{n+p-2}
 \end{vNiceMatrix}
 \\ =
 \begin{vNiceMatrix}[small]
 - T_{n,n-1} A^{(1)}_{n} - A^{(1)}_{n-2} & \Cdots & - T_{n,n-1} A^{(p)}_{n} - A^{(p)}_{n-2} \\[2pt]
 A^{(1)}_{n-1} & \Cdots & A^{(p)}_{n-1} \\[2pt]
 A^{(1)}_{n+1} & \Cdots & A^{(p)}_{n+1} \\[2pt]
 \Vdots & \Ddots & \Vdots \\[2pt]
 A^{(1)}_{n+p-2} & \Cdots & A^{(p)}_{n+p-2}
 \end{vNiceMatrix}
= T_{n,n-1} 
 \begin{vNiceMatrix}[small]
 A^{(1)}_{n-1} & \Cdots & A^{(p)}_{n-1} \\[2pt]
 A^{(1)}_{n} & \Cdots & A^{(p)}_{n} \\[2pt]
 A^{(1)}_{n+1} & \Cdots & A^{(p)}_{n+1} \\[2pt]
 \Vdots & \Ddots & \Vdots \\[2pt]
 A^{(1)}_{n+p-2} & \Cdots & A^{(p)}_{n+p-2} \\[2pt]
 \end{vNiceMatrix}
 -
 \begin{vNiceMatrix}[small]
 A^{(1)}_{n-2} & \Cdots & A^{(p)}_{n-2} \\[2pt]
 A^{(1)}_{n-1} & \Cdots & A^{(p)}_{n-1} \\[2pt]
 A^{(1)}_{n+1} & \Cdots & A^{(p)}_{n+1} \\[2pt]
 \Vdots & \Ddots & \Vdots \\[2pt]
 A^{(1)}_{n+p-2} & \Cdots & A^{(p)}_{n+p-2}
 \end{vNiceMatrix} .
%\end{align*}
\end{multline*}
Now, using recursively these kind of arguments we deduce that
\begin{multline*}
-h_{n-1}
 \begin{vNiceMatrix}[small]
 A^{(1)}_{n-2} & \Cdots & A^{(p)}_{n-2} \\[2pt]
 A^{(1)}_{n-1} & \Cdots & A^{(p)}_{n-1} \\[2pt]
 A^{(1)}_{n+1} & \Cdots & A^{(p)}_{n+1} \\[2pt]
 \Vdots & \Ddots & \Vdots \\[2pt]
 A^{(1)}_{n+p-2} & \Cdots & A^{(p)}_{n+p-2}
 \end{vNiceMatrix}
 =
T_{n,n-2} h_{n-2} 
 \begin{vNiceMatrix}[small]
 A^{(1)}_{n-2} & \Cdots & A^{(p)}_{n-2} \\[2pt]
% A^{(1)}_{n-1} & \Cdots & A^{(p)}_{n-1} \\[2pt]
% A^{(1)}_{n} & \Cdots & A^{(p)}_{n} \\[2pt]
 \Vdots & \Ddots & \Vdots \\[2pt]
 A^{(1)}_{n+p-3} & \Cdots & A^{(p)}_{n+p-3} \\[2pt]
 \end{vNiceMatrix}
 +\cdots\\
 +T_{n,n-p+2} h_{n-p+2}
 \begin{vNiceMatrix}[small]
 A^{(1)}_{n-p+2} & \Cdots & A^{(p)}_{n-p+2} \\[2pt]
% A^{(1)}_{n-1} & \Cdots & A^{(p)}_{n-1} \\[2pt]
% A^{(1)}_{n} & \Cdots & A^{(p)}_{n} \\[2pt]
 \Vdots & \Ddots & \Vdots \\[2pt]
 A^{(1)}_{n+1} & \Cdots & A^{(p)}_{n+1} \\[2pt]
 \end{vNiceMatrix}
 + (-1)^{p-2}
h_{n-p+2}
\begin{vNiceMatrix}[small]
 A^{(1)}_{n-p+1} & \Cdots & A^{(p)}_{n-p+1} \\[2pt]
% A^{(1)}_{n-1} & \Cdots & A^{(p)}_{n-1} \\[2pt]
% A^{(1)}_{n} & \Cdots & A^{(p)}_{n} \\[2pt]
 \Vdots & \Ddots & \Vdots \\[2pt]
 A^{(1)}_{n-1} & \Cdots & A^{(p)}_{n-1} \\[2pt]
 A^{(1)}_{n+1} & \Cdots & A^{(p)}_{n+1} \\[2pt]
 \end{vNiceMatrix}
\end{multline*}
and recalling the recursion $A^{(a)}_{n-p} + \cdots +T_{n,n-p+1} A^{(a)}_{n} +T_{n+1,n-p+1} A^{(a)}_{n+1}=x A^{(a)}_{n-p+1}$
we get 
\begin{multline*}
-h_{n-1}
 \begin{vNiceMatrix}[small]
 A^{(1)}_{n-2} & \Cdots & A^{(p)}_{n-2} \\[2pt]
 A^{(1)}_{n-1} & \Cdots & A^{(p)}_{n-1} \\[2pt]
 A^{(1)}_{n+1} & \Cdots & A^{(p)}_{n+1} \\[2pt]
 \Vdots & \Ddots & \Vdots \\[2pt]
 A^{(1)}_{n+p-2} & \Cdots & A^{(p)}_{n+p-2}
 \end{vNiceMatrix} =
T_{n,n-2} h_{n-2} 
 \begin{vNiceMatrix}[small]
 A^{(1)}_{n-2} & \Cdots & A^{(p)}_{n-2} \\[2pt]
% A^{(1)}_{n-1} & \Cdots & A^{(p)}_{n-1} \\[2pt]
% A^{(1)}_{n} & \Cdots & A^{(p)}_{n} \\[2pt]
 \Vdots & \Ddots & \Vdots \\[2pt]
 A^{(1)}_{n+p-3} & \Cdots & A^{(p)}_{n+p-3} \\[2pt]
 \end{vNiceMatrix}
 +\cdots
 \\ 
 +T_{n,n-p+2} \uppi_{n-p+2}
 \begin{vNiceMatrix}[small]
 A^{(1)}_{n-p+2} & \Cdots & A^{(p)}_{n-p+2} \\[2pt]
% A^{(1)}_{n-1} & \Cdots & A^{(p)}_{n-1} \\[2pt]
% A^{(1)}_{n} & \Cdots & A^{(p)}_{n} \\[2pt]
 \Vdots & \Ddots & \Vdots \\[2pt]
 A^{(1)}_{n+1} & \Cdots & A^{(p)}_{n+1} \\[2pt]
 \end{vNiceMatrix}
 +
(-1)^{p-1}\uppi_{n-p+1}
\begin{vNiceMatrix}[small]
 A^{(1)}_{n-p} + T_{n,n-p+1} A^{(1)}_{n}& \Cdots & A^{(p)}_{n-p} + T_{n,n-p+1} A^{(p)}_{n} \\[2pt]
% A^{(1)}_{n-1} & \Cdots & A^{(p)}_{n-1} \\[2pt]
% A^{(1)}_{n} & \Cdots & A^{(p)}_{n} \\[2pt]
 \Vdots & \Ddots & \Vdots \\[2pt]
 A^{(1)}_{n-1} & \Cdots & A^{(p)}_{n-1} \\[2pt]
 \end{vNiceMatrix}.
\end{multline*} 
%\enlargethispage{.25cm}
Finally, calculating the last determinant we get
\begin{multline*}
(-1)^{p-1}h_{n-p+1}
\begin{vNiceMatrix}[small]
 A^{(1)}_{n-p} + T_{n,n-p+1} A^{(1)}_{n}& \Cdots & A^{(p)}_{n-p} + T_{n,n-p+1} A^{(p)}_{n} \\[2pt]
% A^{(1)}_{n-1} & \Cdots & A^{(p)}_{n-1} \\[2pt]
% A^{(1)}_{n} & \Cdots & A^{(p)}_{n} \\[2pt]
 \Vdots & \Ddots & \Vdots \\[2pt]
 A^{(1)}_{n-1} & \Cdots & A^{(p)}_{n-1} \\[2pt]
 \end{vNiceMatrix} \\
=
h_{n-p+1} T_{n,n-p+1}
\begin{vNiceMatrix}[small]
A^{(1)}_{n-p+1} & \Cdots & A^{(p)}_{n-p+1} \\[2pt]
% A^{(1)}_{n} & \Cdots & A^{(p)}_{n} \\[2pt]
 \Vdots & \Ddots & \Vdots \\[2pt]
 A^{(1)}_{n-1} & \Cdots & A^{(p)}_{n-1} \\[2pt]
A^{(1)}_{n}& \Cdots & A^{(p)}_{n} \\[2pt]
 \end{vNiceMatrix}
 +h_{n-p+1}T_{n,n-p}
\begin{vNiceMatrix}[small]
 A^{(1)}_{n-p} & \Cdots & A^{(p)}_{n-p} \\[2pt]
% A^{(1)}_{n-1} & \Cdots & A^{(p)}_{n-1} \\[2pt]
% A^{(1)}_{n} & \Cdots & A^{(p)}_{n} \\[2pt]
 \Vdots & \Ddots & \Vdots \\[2pt]
 A^{(1)}_{n-1} & \Cdots & A^{(p)}_{n-1} \\[2pt]
 \end{vNiceMatrix}
\end{multline*}
\end{proof}

\subsection{Discrete multiple orthogonality}
\setcounter{MaxMatrixCols}{15}
\begin{defi}\label{def: Jtruncated}
 Let us denote by $ T^{[N]} \in\R^{(N+1)\times(N+1)}$ the $(N+1)$-th leading principal submatrix of the banded Hessenberg matrix $T$:
 
\begin{align}\label{eq:Hessenberg_truncation}
 T^{[N]}\coloneq
\begin{bNiceMatrix}%[columns-width = .5cm]
 T_{0,0} & 1& 0 & \Cdots& &&&0\\
 T_{1,0} & T_{1,1} & 1&\Ddots& &&&\Vdots\\
 \Vdots & & \Ddots& \Ddots&&&&\\
T_{p,0}& & & & &&&\\[7pt]
0&\Ddots&&&&&\\
 \Vdots &\Ddots & &&&&&0\\
 \Vdots & & &&&&&1 \\
0 & \Cdots&&0&T_{N,N-p}& \Cdots & &T_{N,N} 
 %\CodeAfter\line{4-1}{8-5} \line{5-1}{8-4} 
 \end{bNiceMatrix}.
\end{align}
%Recall that we are taking all $T_{p+j,j} \not =0 ,j=0, 1, \dots$. 
%Let us denote the corresponding leading principal minors~as
%$$\delta^{[N]}\coloneq \det T^{[N]}.$$
\end{defi}
We define the vector $ B^{\langle N\rangle} \in\R^{N+1}[x]$ by 
\begin{align*}
 B^{\langle N\rangle}&\coloneq 
 \begin{bNiceMatrix}[columns-width = auto]
 B_0 
 & B_1 
 & \Cdots 
 & B_N
 \end{bNiceMatrix}^\top .
\end{align*}
In what follows the $e_n$ denotes the semi-infinite vector with $0$ all its entries but for a $1$ in its $n$-th~entry.

\begin{pro}[Determinantal expression]\label{pro:determintal_second_kind}
% \begin{enumerate} \item
For the type II recursion polynomials we have the determinantal expression
 \begin{align}\label{eq:det_B}
 B_{N+1}(x)&=\det\big(x I_{N+1}- T^{[N]}\big).
% =\begin{vNiceMatrix}%[columns-width = 10pt]
%x- T_{0,0} & -1& 0 & \Cdots& &&&0\\
%-T_{1,0} & x-T_{1,1} & -1&\Ddots& &&&\Vdots\\
%\Vdots & & \Ddots& \Ddots&&&&\\
%-T_{p,0}& \Ddots& & & &&&\\[5pt]
%0&&&&&&\\
%\Vdots & \Ddots& &&&&&0\\
%\Vdots & & &&&&&-1 \\
%0 & \Cdots&&0&-T_{N,N-p}& \Cdots & &x-T_{N,N} 
%%\CodeAfter\line{4-1}{8-5} \line{5-1}{8-4} 
% \end{vNiceMatrix}.
 \end{align}
Hence, they are the characteristic polynomials of the leading principal submatrices $T^{[N]}$.
% \item For the recursion polynomials of type II of the second kind, $B^{(1)}_{N+1} $ and $B^{(2)}_{N+1} $, we have the following adjugate and determinantal expressions 
% \begin{align*}
% B^{(1)}_{N+1} &= e_1^\top \operatorname{adj}\big(x I_{N+1}-T^{[N]}\big)e_1=\begin{vNiceMatrix}[columns-width = 10pt]
% x-c_1 & -1&0&\Cdots&&&0\\[-3pt]
% -b_2&x-c_2&-1&\Ddots&&&\Vdots\\
% -T_3&-b_3&x-c_3&-1&&&\\
% 0&-T_4&-b_4&x-c_4&-1&&\\
% \Vdots&\Ddots&\Ddots&\Ddots&\Ddots& \Ddots&0\\
% &&&-T_{N-1}&-b_{N-1}&x-c_{N-1}&-1\\
% 0&\Cdots&&0&-T_{N}&-b_{N}&x-c_N
% \end{vNiceMatrix},\\
% B^{(2)}_{N+1} &= e_1^\top \operatorname{adj}\big(x I_{N+1}-T^{[N]}\big)^\top (e_2 - \nu e_1)
% =b^{(1)}_{N+1}-\nu B^{(1)}_{N+1} , \\ b^{(1)}_{N+1}&= e_{1}^\top \operatorname{adj}\big(x I_{N+1}-T^{[N]}\big)^\top e_{2}=
% \begin{vNiceMatrix}[columns-width = 10pt]
% x-c_2 & -1&0&\Cdots&&&0\\[-3pt]
% -b_3&x-c_3&-1&\Ddots&&&\Vdots\\
% -T_4&-b_4&x-c_4&-1&&&\\
% 0&-T_5&-b_5&x-c_5&-1&&\\
% \Vdots&\Ddots&\Ddots&\Ddots&\Ddots& \Ddots&0\\
% &&&-T_{N-1}&-b_{N-1}&x-c_{N-1}&-1\\
% 0&\Cdots&&0&-T_{N}&-b_{N}&x-c_N
% \end{vNiceMatrix}.
% \end{align*}
% and we take $b^{(1)}_{2}:=1$.
% \end{enumerate}
\end{pro}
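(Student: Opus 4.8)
\noindent\emph{Sketch of proof.} The plan is to read $B_{N+1}$ off a truncation of the eigenvalue equation $TB=xB$ and then to evaluate the single cofactor that appears, which turns out to be triangular.

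The first step is to establish
\begin{align*}
 \big(xI_{N+1}-T^{[N]}\big)\,B^{\langle N\rangle}(x)=B_{N+1}(x)\,\big[0\ \cdots\ 0\ 1\big]^\top,
\end{align*}
the right-hand side being $B_{N+1}(x)$ times the last vector of the standard basis of $\R^{N+1}$. For a row index $n\le N-1$ the $n$-th row of $T^{[N]}$ coincides with the $n$-th row of $T$, whose support lies in columns $\le n+1\le N$; hence the $n$-th component of the left-hand side equals $xB_n-(TB)_n=0$ by Definition \ref{def:typeII}. For $n=N$, passing from $T$ to $T^{[N]}$ discards exactly the superdiagonal term $T_{N,N+1}B_{N+1}=B_{N+1}$ from $(TB)_N=xB_N$, so the last component equals $xB_N-(xB_N-B_{N+1})=B_{N+1}$.

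Since $\det\big(xI_{N+1}-T^{[N]}\big)$ is a monic polynomial of degree $N+1$ it is nonzero off a finite set; for such $x$ the matrix $xI_{N+1}-T^{[N]}$ is invertible, and reading the first component of $B^{\langle N\rangle}=B_{N+1}\big(xI_{N+1}-T^{[N]}\big)^{-1}\big[0\ \cdots\ 0\ 1\big]^\top$, together with $B_0=1$, gives
\begin{align*}
 1=B_{N+1}(x)\,\frac{C_{N,0}}{\det\big(xI_{N+1}-T^{[N]}\big)},
\end{align*}
where $C_{N,0}$ is the cofactor of the bottom-left entry of $xI_{N+1}-T^{[N]}$. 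Finally I would compute $C_{N,0}$: deleting the last row and the first column of $xI_{N+1}-T^{[N]}$ leaves, once rows $0,\dots,N-1$ are read against columns $1,\dots,N$, a lower-triangular matrix whose diagonal entries are the superdiagonal entries $-T_{n,n+1}=-1$ of $T$; thus this minor equals $(-1)^N$ and $C_{N,0}=(-1)^N(-1)^N=1$. Substituting back yields $\det\big(xI_{N+1}-T^{[N]}\big)=B_{N+1}(x)$ outside a finite set, hence identically since both sides are polynomials. This is \eqref{eq:det_B}, and it exhibits $B_{N+1}$ as the characteristic polynomial of $T^{[N]}$.

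I expect the only genuine obstacles to be the index bookkeeping in the first step — that truncating $T$ to $T^{[N]}$ perturbs only the last row of the eigen-relation — and the recognition of the leftover minor as triangular; both are immediate once the banded lower Hessenberg shape of $T$ is used. An essentially equivalent alternative is a direct induction on $N$: expanding $\det\big(xI_{N+1}-T^{[N]}\big)$ along its last row, the cofactor of the $(N,N)$ entry is $\det\big(xI_N-T^{[N-1]}\big)$, while for $1\le k\le p$ the minor attached to the $(N,N-k)$ entry is block lower triangular with diagonal blocks $xI_{N-k}-T^{[N-k-1]}$ and a $k\times k$ lower-triangular matrix all of whose diagonal entries equal $-1$; collecting the contributions (and carrying the cofactor signs) reproduces the $(p+2)$-term recurrence \eqref{eq:recurrence_B} with the convention $T_{N,j}=0$ for $j<0$, the base case being $\det(x-T_{0,0})=B_1$.
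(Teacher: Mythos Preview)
Your proof is correct. Your primary route is genuinely different from the paper's: the paper proceeds by induction, expanding $\det\big(xI_{N+1}-T^{[N]}\big)$ along the last column (and then the resulting minor along its last row) to show that these determinants satisfy the same $(p+2)$-term recurrence \eqref{eq:recurrence_B} as the $B_n$, with the base case $\det(x-T_{0,0})=B_1$. You instead start from the truncated eigen-relation $\big(xI_{N+1}-T^{[N]}\big)B^{\langle N\rangle}=B_{N+1}\,e_{N+1}$ and read off the equality through a single cofactor, exploiting that the bottom-left minor is lower-triangular with $-1$'s on the diagonal thanks to the monic Hessenberg shape. Your argument is slightly slicker in that it avoids the inductive bookkeeping and yields the identity in one shot; the paper's approach, on the other hand, is more self-contained (no Cramer/adjugate) and makes the connection to the recursion explicit. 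The inductive alternative you sketch at the end --- expanding along the last row and recognizing the block lower-triangular structure of the subdiagonal minors --- is essentially the paper's argument with rows and columns interchanged.
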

\begin{proof}
 All these determinants satisfy the recurrence \eqref{eq:recurrence_B}. For that aim we expand the determinant along the last column, and the second determinant that appears after the expression is expanded along the last row. Notice also that the first determinant coincide with $B_1(x)$, so the statement holds.
% $\deg B_{N+1}^{(1)}=\deg B_{N+1}^{(2)}=N$. One can check that the \emph{initial conditions}, that is the first determinants coincide with the expressions found in \eqref{eq:first_typeII}, \eqref{eq:first_second_kind(1)} and \eqref{eq:first_second_kind(2)} . Consequently, these determinantal expressions coincide with the polynomials of type II, and of type II of second kind. 
\end{proof}

We will assume that the eigenvalues of $T^{[N]}$ are simple.
Then, one has the following basic result:

\begin{pro}
 Let $\Big\{\lambda^{[N]}_n\Big\}_{n=1}^{N+1}$ be the set of zeros of the polynomial $B_{N+1}(x)$. Then, $\lambda^{[N]}_n$ are the eigenvalues of $ T^{[N]}$ with associated right eigenvectors
 \begin{align*}
 u^{\langle N\rangle}_n&\coloneq 
 B^{\langle N\rangle}
 \big|_{x=\lambda^{[N]}_{n}}, &n&\in\{1,\dots, N+1\}.
 \end{align*}
% ; i.e.,
%\begin{align}
% \label{eq:eigenvalue}
% T^{[N]} B^{[N]}\big(\lambda^{[N]}_{n}\big)=\lambda^{[N]}_{n} B^{[N]}\big(\lambda^{[N]}_{n}\big).
%\end{align}
\end{pro}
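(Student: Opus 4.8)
The plan is to identify the zeros of $B_{N+1}$ with the eigenvalues of $T^{[N]}$ and then to check that substituting such a zero into the truncated vector $B^{\langle N\rangle}$ produces the corresponding right eigenvector. By Proposition~\ref{pro:determintal_second_kind} we already know that $B_{N+1}(x)=\det\big(xI_{N+1}-T^{[N]}\big)$ is the characteristic polynomial of $T^{[N]}$, so its zeros $\lambda^{[N]}_n$ are precisely the eigenvalues of $T^{[N]}$; under the standing assumption of simplicity there are $N+1$ of them. This is the first, essentially immediate, step.

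Next I would write out the eigenvalue equation $T^{[N]}\,u = x\,u$ componentwise. The first $N$ rows of $T^{[N]}$ reproduce exactly the three-plus-$p$-term recurrence \eqref{eq:recurrence_B} for $B_0,\dots,B_N$: row $n$ (for $n=0,\dots,N-1$) reads $T_{n,n-p}u_{n-p}+\dots+T_{n,n}u_n+u_{n+1}=x\,u_n$, which (after rearranging) is the relation defining $B_{n+1}$ in terms of $B_n,B_{n-1},\dots,B_{n-p}$. Hence the vector $B^{\langle N\rangle}(x)=\big[B_0(x),\dots,B_N(x)\big]^\top$ satisfies the first $N$ rows of $\big(xI_{N+1}-T^{[N]}\big)B^{\langle N\rangle}(x)=r(x)$ identically in $x$, with the residual $r(x)$ supported only in its last component, namely $r_N(x) = x\,B_N(x) - T_{N,N-p}B_{N-p}(x)-\dots-T_{N,N}B_N(x)$. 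But by \eqref{eq:recurrence_B} the right-hand side of that last row would produce exactly $B_{N+1}(x)$; that is, $\big(xI_{N+1}-T^{[N]}\big)B^{\langle N\rangle}(x) = B_{N+1}(x)\,e_{N}$ (indexing the last entry), where $e_N$ is the last coordinate vector. Specializing at $x=\lambda^{[N]}_n$, where $B_{N+1}$ vanishes, gives $\big(\lambda^{[N]}_n I_{N+1}-T^{[N]}\big)u^{\langle N\rangle}_n = 0$, i.e.\ $u^{\langle N\rangle}_n = B^{\langle N\rangle}\big|_{x=\lambda^{[N]}_n}$ is a right eigenvector for $\lambda^{[N]}_n$.

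Finally I would note that $u^{\langle N\rangle}_n$ is genuinely a nonzero vector: its first component is $B_0=1$, so it cannot be the zero vector, and since $\lambda^{[N]}_n$ is simple the eigenspace is one-dimensional and $u^{\langle N\rangle}_n$ spans it. This closes the argument.

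The only point requiring care — the ``main obstacle'' such as it is — is the bookkeeping in the last row: one must verify that the lowest rows of $T^{[N]}$ really do truncate the recurrence correctly, so that the residual after applying $xI_{N+1}-T^{[N]}$ to $B^{\langle N\rangle}(x)$ is exactly $B_{N+1}(x)$ times the last coordinate vector and nothing more. This is the same computation already used to prove Proposition~\ref{pro:determintal_second_kind} (expansion along the last column/row), so it can be invoked rather than redone; indeed the cleanest write-up simply says that $B^{\langle N\rangle}(x)$ solves the linear system whose determinant is $B_{N+1}(x)$ and whose last row fails to hold precisely by the amount $B_{N+1}(x)$, hence holds exactly when $B_{N+1}(x)=0$.
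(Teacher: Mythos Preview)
Your proof is correct and follows essentially the same approach as the paper: use the determinantal identity $B_{N+1}(x)=\det(xI_{N+1}-T^{[N]})$ to identify the eigenvalues, then derive the identity $T^{[N]}B^{\langle N\rangle}(x)+B_{N+1}(x)\,e_{N}=x\,B^{\langle N\rangle}(x)$ from the recurrence and evaluate at a zero of $B_{N+1}$. Your added remark that $B_0=1$ ensures $u^{\langle N\rangle}_n\neq 0$ is a worthwhile detail the paper leaves implicit.
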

\begin{proof}
 According to \eqref{eq:det_B} the eigenvalues of $T^{[N]}$ are the zeros of $B_{N+1}$. 
From
 \begin{align}\label{eq:truncated_Hessenberg}
 T^{[N]} B^{\langle N\rangle}(x)+\begin{bNiceMatrix}
 0\\\Vdots\\0\\B_{N+1} (x)
 \end{bNiceMatrix}=x B^{\langle N\rangle}(x).
 \end{align}
We get that $u^{\langle N\rangle}_n$ is a right eigenvector. 
\end{proof}

\begin{defi}
 Let us introduce the polynomials
\begin{align}\label{eq:QNn}
 Q_{n,N}\coloneq\begin{vNiceMatrix}
 A^{(1)}_{n} & \Cdots & A^{(p)}_{n} \\[2pt]
 A^{(1)}_{N+1} & \Cdots & A^{(p)}_{N+1} \\[2pt]
 \Vdots & \Ddots & \Vdots \\[2pt]
 A^{(1)}_{N+p-1} & \Cdots & A^{(p)}_{N+p-1}
 \end{vNiceMatrix},
\end{align}
and the semi-infinite row matrix
$ Q_N\coloneq\left[\begin{NiceMatrix}
 Q_{0,N} &Q_{1,N} &\Cdots
 \end{NiceMatrix}\right]$.
Additionally, we consider
\begin{align*}
 Q^{\langle N\rangle}\coloneq \begin{bNiceMatrix}
 Q_{0,N} &Q_{1,N}&\Cdots & Q_{N,N}
 \end{bNiceMatrix}.
\end{align*}
\end{defi}

\begin{pro}\label{pro:properties_Q}
For the polynomials $Q_{n,N}$ we find
 \begin{enumerate}
 \item $ Q_{N+1,N}=\cdots=Q_{N+p-1,N}=0$.
\item $Q_{N,N}=h_N^{-1}B_N$ and $Q_{N+p,N}=(-1)^{p-1}h_{N+1}^{-1}B_{N+1}$.
\item $Q_NT=xQ_N$.
\item \begin{align}\label{eq:dual}
 Q^{\langle N\rangle} T^{[N]}+
 \begin{bNiceMatrix}
 0& \Cdots& 0& T_{N+p,N}Q_{N+p,N}
 \end{bNiceMatrix}&=x Q^{\langle N\rangle}.
\end{align}
\end{enumerate}
\end{pro}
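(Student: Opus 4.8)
The statement collects four facts about the polynomials $Q_{n,N}$ defined as the $p\times p$ determinants in \eqref{eq:QNn}. The natural strategy is to prove them in the order (i) $\to$ (ii) $\to$ (iii) $\to$ (iv), using only the recursion \eqref{eq:recursion_dual_A} for the type I polynomials, the determinantal formula \eqref{eq:B_n-A_n} from Proposition \ref{pro:determinatal_B_A}, and elementary multilinear algebra of determinants (row repetition, row operations, cyclic permutation of rows picking up the sign $(-1)^{p-1}$).

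First, for (i): when $n\in\{N+1,\dots,N+p-1\}$ the top row $\big(A^{(1)}_n,\dots,A^{(p)}_n\big)$ of the determinant $Q_{n,N}$ literally coincides with one of the rows $\big(A^{(1)}_{N+1},\dots,A^{(p)}_{N+1}\big),\dots,\big(A^{(1)}_{N+p-1},\dots,A^{(p)}_{N+p-1}\big)$ already present below it, so the determinant has two equal rows and vanishes. For (ii): $Q_{N,N}$ is exactly the determinant in \eqref{eq:B_n-A_n} with the label $n$ replaced by $N$, up to the prefactor $h_N$; comparing with Proposition \ref{pro:determinatal_B_A} gives $Q_{N,N}=h_N^{-1}B_N$. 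For $Q_{N+p,N}$, the determinant has rows indexed $N+p,N+1,N+2,\dots,N+p-1$; cyclically moving the first row down past the other $p-1$ rows (which costs $(-1)^{p-1}$) turns it into the determinant with consecutive indices $N+1,\dots,N+p$, which is $h_{N+1}^{-1}B_{N+1}$ by \eqref{eq:B_n-A_n} applied at $n=N+1$. Hence $Q_{N+p,N}=(-1)^{p-1}h_{N+1}^{-1}B_{N+1}$.

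The heart of the matter is (iii), the statement $Q_N T=x Q_N$, i.e. that the row vector $Q_N$ is a left eigenvector of $T$ with eigenvalue $x$. Componentwise this reads, for each $n$,
\begin{align*}
Q_{n-1,N}+T_{n,n}Q_{n,N}+T_{n+1,n}Q_{n+1,N}+\dots+T_{n+p,n}Q_{n+p,N}=xQ_{n,N},
\end{align*}
which is structurally the same recursion \eqref{eq:recursion_dual_A} that each column $A^{(a)}$ satisfies. The point is that $Q_{n,N}$ is a fixed $\mathbb{R}$-linear combination (with $N$-dependent, $n$-independent coefficients: the cofactors along the first row) of $A^{(1)}_n,\dots,A^{(p)}_n$ — but that is not quite literally true because the lower rows of the determinant also involve $A^{(a)}_{N+1},\dots,A^{(a)}_{N+p-1}$, not $A^{(a)}_n$. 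The clean way is instead to expand $Q_{n,N}$ by linearity in its first row and observe that replacing the entries $A^{(a)}_n$ in that first row by $xA^{(a)}_n$ and then using \eqref{eq:recursion_dual_A} to rewrite $xA^{(a)}_n=A^{(a)}_{n-1}+T_{n,n}A^{(a)}_n+\dots+T_{n+p,n}A^{(a)}_{n+p}$ yields exactly $\sum_{k=-1}^{p}T_{n+k,n}\,(\text{determinant with first row }A^{(a)}_{n+k})$, with $T_{n-1,n}\coloneq1$ and $T_{n,n}$ understood; each such determinant is by definition $Q_{n+k,N}$. This is precisely the asserted identity. The only care needed is the boundary terms (when $n<p$, some $Q_{n-j,N}$ involve negative indices, which vanish because $A^{(a)}_{-1}=0$ in \eqref{eq:recursion_dual_A}), and these vanish for the same reason they do in \eqref{eq:recursion_dual_A}. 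This linearity-in-the-first-row manipulation is the step I expect to require the most care to write cleanly.

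Finally (iv) is a truncation of (iii): restricting the infinite identity $Q_N T=xQ_N$ to its first $N+1$ entries, the only term that "leaks" out of the leading principal block $T^{[N]}$ is the one coming from the entry $T_{N+p,N}$ multiplying $Q_{N+p,N}$ in the equation for the $N$-th component, giving the correction vector $\big[0,\dots,0,\;T_{N+p,N}Q_{N+p,N}\big]$ on the left-hand side; all higher components $Q_{N+1,N},\dots,Q_{N+p-1,N}$ that would otherwise contribute are zero by part (i). Rearranging gives \eqref{eq:dual}. The main obstacle, as noted, is the bookkeeping in (iii); everything else is routine determinant manipulation, parallel to the already-presented proof of Proposition \ref{pro:determinatal_B_A}.
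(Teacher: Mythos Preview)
Your proof is correct and follows essentially the same approach as the paper's own proof: part (i) via repeated rows, part (ii) via Proposition~\ref{pro:determinatal_B_A} with the cyclic row shift for $Q_{N+p,N}$, part (iii) via linearity of the determinant in its first row together with the type~I recursion~\eqref{eq:recursion_dual_A}, and part (iv) by truncating (iii) using (i). One small remark: your momentary worry in (iii) is unnecessary --- the cofactor expansion along the first row \emph{does} exhibit $Q_{n,N}$ as a fixed ($n$-independent) linear combination of $A^{(1)}_n,\dots,A^{(p)}_n$, since the cofactors involve only the entries $A^{(b)}_{N+1},\dots,A^{(b)}_{N+p-1}$, which do not depend on $n$; this is exactly the paper's one-line justification ``each row in the determinant satisfies such recurrence.''
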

\begin{proof}
 \begin{enumerate}
 \item As $Q_{n,N}$ is the determinant \eqref{eq:QNn} we see that it vanishes whenever two rows are equal, that happens precisely in the indicated cases.
 \item It follows from Proposition \ref{pro:determinatal_B_A}. 
 \item It is a direct consequence of the fact that each row in the determinant in \eqref{eq:QNn} satisfies such recurrence.
 \item It follows from the previous points i) and iii).
 \end{enumerate}
\end{proof}

Now we are ready to give a set of left eigenvectors of the matrix $T^{[N]}$.
\begin{pro}
For $k\in\{1,\dots,N+1\}$, the vectors
\begin{align*}
w^{\langle N\rangle}_k&\coloneq Q^{\langle N\rangle}\big|_{x=\lambda^{[N]}_k}, &k&\in\{1,\dots,N+1\},
\end{align*}
are left eigenvectors of $T^{[N]}$.
\end{pro}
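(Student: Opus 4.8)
The plan is to obtain the statement as an immediate corollary of Proposition~\ref{pro:properties_Q}. Item~(iv) there provides the polynomial identity
\[
 Q^{\langle N\rangle}\,T^{[N]}+\begin{bNiceMatrix}0&\Cdots&0&T_{N+p,N}\,Q_{N+p,N}\end{bNiceMatrix}=x\,Q^{\langle N\rangle},
\]
an equality between row vectors whose entries are polynomials in $x$. The only thing preventing $Q^{\langle N\rangle}$ from being a left eigenvector of $T^{[N]}$ at a fixed value of $x$ is the rank-one correction term, which is supported in the last slot and equals $T_{N+p,N}\,Q_{N+p,N}(x)$. So the whole argument reduces to showing that this correction vanishes when $x=\lambda^{[N]}_k$.

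For that I would invoke item~(ii) of Proposition~\ref{pro:properties_Q}, namely $Q_{N+p,N}=(-1)^{p-1}h_{N+1}^{-1}B_{N+1}$, so the correction term is a nonzero scalar multiple of $B_{N+1}(x)$. By Proposition~\ref{pro:determintal_second_kind} (or directly by the definition of $\lambda^{[N]}_k$ as a zero of $B_{N+1}$), we have $B_{N+1}\big(\lambda^{[N]}_k\big)=0$, hence $Q_{N+p,N}\big(\lambda^{[N]}_k\big)=0$. Substituting $x=\lambda^{[N]}_k$ into the identity above and setting $w^{\langle N\rangle}_k:=Q^{\langle N\rangle}\big|_{x=\lambda^{[N]}_k}$ therefore gives
\[
 w^{\langle N\rangle}_k\,T^{[N]}=\lambda^{[N]}_k\,w^{\langle N\rangle}_k,
\]
which is precisely the left-eigenvector equation.

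The one point that requires a little care — and which I expect to be the real obstacle — is that $w^{\langle N\rangle}_k$ is a genuine (nonzero) eigenvector. Its last component is, by item~(ii) of Proposition~\ref{pro:properties_Q}, equal to $h_N^{-1}B_N\big(\lambda^{[N]}_k\big)$, so it suffices to check $B_N\big(\lambda^{[N]}_k\big)\neq0$, i.e.\ that the consecutive characteristic polynomials $B_N$ and $B_{N+1}$ (of $T^{[N-1]}$ and $T^{[N]}$) share no zero. I would argue this using the simplicity assumption on the eigenvalues together with the hypothesis $T_{n,n-p}\neq0$: a common zero would, via the recursions \eqref{eq:recurrence_B} and \eqref{eq:recursion_dual_A}, force a degeneracy incompatible with $B_0=1$ (equivalently, with the one-dimensionality of the eigenspace and the explicit nonzero right eigenvector $B^{\langle N\rangle}\big|_{x=\lambda^{[N]}_k}$ whose first entry is $B_0=1$). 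Once nonvanishing of this last component is secured, $w^{\langle N\rangle}_k\neq0$ and the proof is complete.
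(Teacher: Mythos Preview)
Your approach is essentially the same as the paper's: invoke items (ii) and (iv) of Proposition~\ref{pro:properties_Q} and evaluate at $x=\lambda^{[N]}_k$ so that the correction term $T_{N+p,N}Q_{N+p,N}=(-1)^{p-1}T_{N+p,N}h_{N+1}^{-1}B_{N+1}$ vanishes. The paper's proof is a one-line remark to exactly this effect and does not separately verify $w^{\langle N\rangle}_k\neq0$; your additional care on that point (via $Q_{N,N}=h_N^{-1}B_N$ and the simplicity hypothesis) is a welcome refinement, though your sketch of why $B_N$ and $B_{N+1}$ cannot share a zero should be tightened if you want a fully self-contained argument.
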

\begin{proof}
Properties ii) and iv) in Proposition \ref{pro:properties_Q} and an evaluation at $\lambda^{[N]}_k$ leads to the result.
\end{proof}
Now we now give some Christoffel--Darboux formulas. Notice that \eqref{eq:CD2} and \eqref{eq:CD2_confluent} appeared in 
\cite[Theorem 2.5]{Coussement-VanAssche}.
\begin{pro}[Christoffel--Darboux type formulas]\label{theorem:CD}
\begin{enumerate}
% \item For the truncated polynomials the following Christoffel--Darboux type relation holds
% \begin{align}\label{eq:CD1}
% \sum_{n=0}^{N}B_{N+1}^{[n+1]}(x)B_{n}(y)=\frac{B_{N+1}(x)-B_{N+1}(y)}{x-y}.
% \end{align}
%\item The following confluent Christoffel--Darboux type formula is satisfied
%\begin{align}\label{eq:CD1_confluent}
% \sum_{n=0}^{N}B_{N+1}^{[n+1]}B_{n}=B'_{N+1}.
%\end{align}
\item For the polynomials $Q_{n,N}$ introduced in \eqref{eq:QNn} we get the following Christoffel--Darboux formula
\begin{align}\label{eq:CD2}
\sum_{n=0}^{N}Q_{n,N}(x)B_n(y)= \frac{1}{h_N}\frac{B_{N+1}(x)B_{N}(y)-B_{N}(x)B_{N+1}(y)}{x-y}.
\end{align}
\item The following confluent Christoffel--Darboux relation is fulfilled 
\begin{align}\label{eq:CD2_confluent}
\sum_{n=0}^{N}Q_{n,N}B_n= \frac{1}{h_N}\big(B'_{N+1}B_{N}-B'_{N}B_{N+1}\big).
\end{align}
\end{enumerate}
\end{pro}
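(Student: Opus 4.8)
The plan is to derive the bilinear identity \eqref{eq:CD2} from the two eigenvalue-type recursions governing the polynomials involved, in the classical telescoping manner. First I would record the key mixed recursions: the vector $B^{\langle N\rangle}$ satisfies the truncated relation \eqref{eq:truncated_Hessenberg}, i.e. $T^{[N]}B^{\langle N\rangle}(y)+e_{N+1}B_{N+1}(y)=yB^{\langle N\rangle}(y)$ (with $e_{N+1}$ the last basis vector of $\R^{N+1}$), while the row vector $Q^{\langle N\rangle}$ satisfies its dual \eqref{eq:dual}, namely $Q^{\langle N\rangle}(x)T^{[N]}+T_{N+p,N}Q_{N+p,N}(x)\,e_{N+1}^\top=xQ^{\langle N\rangle}(x)$. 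Multiplying the first on the left by $Q^{\langle N\rangle}(x)$ and the second on the right by $B^{\langle N\rangle}(y)$ and subtracting, the $T^{[N]}$-terms cancel and one is left with
\begin{align*}
(x-y)\,Q^{\langle N\rangle}(x)B^{\langle N\rangle}(y)
= T_{N+p,N}Q_{N+p,N}(x)B_N(y)-Q_N(x)\Big|_{\text{last comp.}}B_{N+1}(y),
\end{align*}
where the first term on the left side is exactly $\sum_{n=0}^{N}Q_{n,N}(x)B_n(y)$ and the right side involves only the boundary entries $Q_{N,N}$ and $Q_{N+p,N}$.

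Next I would insert the explicit boundary values from Proposition~\ref{pro:properties_Q}(ii): $Q_{N,N}=h_N^{-1}B_N$ and $Q_{N+p,N}=(-1)^{p-1}h_{N+1}^{-1}B_{N+1}$, together with $T_{N+p,N}=H_{N+1}/H_N$ and the relation $h_{N+1}=(-1)^{p-1}h_N T_{N+p,N}$, so that $T_{N+p,N}(-1)^{p-1}h_{N+1}^{-1}=h_N^{-1}$. Substituting these, the right-hand side collapses to $h_N^{-1}\big(B_{N+1}(x)B_N(y)-B_N(x)B_{N+1}(y)\big)$, which after dividing by $x-y$ gives \eqref{eq:CD2}. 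The confluent formula \eqref{eq:CD2_confluent} then follows by taking $y\to x$ in \eqref{eq:CD2}: the left side is continuous in $y$ and tends to $\sum_{n=0}^N Q_{n,N}(x)B_n(x)$, while the difference quotient on the right side tends to the Wronskian-type expression $h_N^{-1}\big(B'_{N+1}(x)B_N(x)-B'_N(x)B_{N+1}(x)\big)$ by the definition of the derivative (the numerator vanishes at $y=x$, so L'Hôpital or a Taylor expansion applies).

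The main obstacle is bookkeeping rather than anything deep: one must be careful that $Q^{\langle N\rangle}$ as defined has $N+1$ components $Q_{0,N},\dots,Q_{N,N}$ and that the ``tail'' contributions from the dual recursion \eqref{eq:dual} really do reduce to a single boundary term proportional to $e_{N+1}^\top$ — this is where Proposition~\ref{pro:properties_Q}(i), the vanishing $Q_{N+1,N}=\dots=Q_{N+p-1,N}=0$, is essential, since it guarantees that no intermediate components of the infinite row $Q_N$ leak into the truncated identity. One also has to check the sign produced by the factor $(-1)^{p-1}$ matches between the two occurrences so that the final coefficient is exactly $1/h_N$ and not $\pm 1/h_N$; tracking this through $h_{N+1}=(-1)^{p-1}h_N T_{N+p,N}$ is the one place to be cautious. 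Once these are in hand, both formulas are immediate, and indeed \eqref{eq:CD2} and \eqref{eq:CD2_confluent} coincide with \cite[Theorem 2.5]{Coussement-VanAssche}.
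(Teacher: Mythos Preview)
Your proposal is correct and follows essentially the same approach as the paper: use the truncated eigenvalue relation \eqref{eq:truncated_Hessenberg} and its dual \eqref{eq:dual}, pair them, subtract to cancel the $T^{[N]}$-terms, and then substitute the boundary identities $Q_{N,N}=h_N^{-1}B_N$, $Q_{N+p,N}=(-1)^{p-1}h_{N+1}^{-1}B_{N+1}$ together with $h_{N+1}=(-1)^{p-1}T_{N+p,N}h_N$ to collapse the right-hand side to $h_N^{-1}\big(B_{N+1}(x)B_N(y)-B_N(x)B_{N+1}(y)\big)$; the confluent version is then obtained by letting $y\to x$. Your remarks on the bookkeeping (the vanishing $Q_{N+1,N}=\dots=Q_{N+p-1,N}=0$ and the sign tracking) are apt and match the ingredients the paper invokes.
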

\begin{proof}
%\begin{enumerate}
% \item It follows from \eqref{eq:truncated_Hessenberg} and \eqref{eq:left_eigenvectors} that
% \begin{align*}
% \begin{bNiceMatrix}
% B^{[1]}_{N+1}(x)& \Cdots & B^{[N+1]}_{N+1}(x)
% \end{bNiceMatrix}T^{[N]}+ \begin{bNiceMatrix}
% B^{[0]}_{N+1} (x)& 0&\Cdots & 0
% \end{bNiceMatrix}&=x \begin{bNiceMatrix}
% B^{[1]}_{N+1}(x) & \Cdots & B^{[N+1]}_{N+1}(x)
% \end{bNiceMatrix},\\
% T^{[N]}\begin{bNiceMatrix} B_0(y)\\\Vdots\\
% B_N(y)
% \end{bNiceMatrix}+\begin{bNiceMatrix}
% 0\\\Vdots\\0\\B_{N+1} (y)
%\end{bNiceMatrix}&=y \begin{bNiceMatrix} B_0(y)\\\Vdots\\
% B_N(y)
%\end{bNiceMatrix},
%\end{align*}
%so that
%\begin{multline*}
% \begin{bNiceMatrix}
% B^{[0]}_{N+1} (x)& 0&\Cdots & 0
% \end{bNiceMatrix}\begin{bNiceMatrix} B_0(y)\\\Vdots\\
% B_N(y)
%\end{bNiceMatrix}- \begin{bNiceMatrix}
%B^{[1]}_{N+1}(x)& \Cdots & B^{[N+1]}_{N+1}(x)
%\end{bNiceMatrix}\begin{bNiceMatrix}
%0\\\Vdots\\0\\B_{N+1} (y)
%\end{bNiceMatrix}\\=(x-y ) \begin{bNiceMatrix}
%B^{[1]}_{N+1}(x)& \Cdots & B^{[N+1]}_{N+1}(x)
%\end{bNiceMatrix}\begin{bNiceMatrix} B_0(y)\\\Vdots\\
%B_N(y)
%\end{bNiceMatrix},
%\end{multline*}
%and \eqref{eq:CD1} follows immediately.
%\item To get \eqref{eq:CD1_confluent} take the limit $x\to y$ in \eqref{eq:CD1}.
%
%\item
We use \eqref{eq:truncated_Hessenberg} and \eqref{eq:dual} to get 
\begin{align*}
 & -Q^{\langle N\rangle}(x)
 \begin{bNiceMatrix}
 0\\ \Vdots\\ 0 \\ B_{N+1} (y)
 \end{bNiceMatrix}+\begin{bNiceMatrix}
 0& \Cdots& 0& T_{N+p,N}Q_{N+p,N}(x)
 \end{bNiceMatrix} \begin{bNiceMatrix} B_0(y)\\\Vdots\\
 B_N(y)
 \end{bNiceMatrix}=(x-y) Q^{\langle N\rangle}(x)\begin{bNiceMatrix} B_0(y)\\\Vdots\\
 B_N(y)
 \end{bNiceMatrix},
\end{align*}
Now, recalling $ Q_{N,N}= h^{-1}_N B_N$, $ Q_{N+p,N}= (-1)^{p-1}h^{-1}_{N+1} B_{N+1}$ and $ h_{N+1}= (-1)^{p-1} T_{N+p,N}h_N $ we obtain \eqref{eq:CD2}. Finally, \eqref{eq:CD2_confluent} appears as a limit in \eqref{eq:CD2}.
%\end{enumerate}
\end{proof}

\begin{pro}[Spectral properties]Assume that $B_{N+1}$ has simple zeros at the set $\big\{\lambda^{[N]}_k\big\}_{k=1}^{N+1}$, so that
the vectors $u_k^{\langle N\rangle}\coloneq B^{\langle N\rangle}\big(\lambda^{[N]}_k\big)$ and $\tilde w^{\langle N\rangle}_k\coloneq Q^{\langle N\rangle} \big(\lambda^{[N]}_k\big)$ are right and left eigenvectors of $ T^{[N]}$, respectively, $k=1,\dots,N+1$. Then:
 \begin{enumerate}
 \item The normalized basis of left eigenvectors $\big\{w^{\langle N\rangle}_k\big\}_{k=1}^{N+1}$, which is biorthogonal to the basis of right eigenvectors $\big\{u^{\langle N\rangle}_k\big\}_{k=1}^{N+1}$, is given by
 \begin{align*}
 w^{\langle N\rangle}_{k}=\frac{ Q^{\langle N\rangle}\big(\lambda^{[N]}_k\big)}{\sum_{l=0}^{N}Q_{l,N}\big(\lambda^{[N]}_k\big)B_{l}\big(\lambda^{[N]}_k\big)}.
 \end{align*}
\item The following expression holds
\begin{align}
\label{eq:kcomponentelefteigenI}
 w^{\langle N\rangle}_{k,n}&=\frac{Q_{n-1,N}\big(\lambda^{[N]}_k\big)}{\sum_{l=0}^{N}Q_{l,N}\big(\lambda^{[N]}_k\big)B_{l}\big(\lambda^{[N]}_k\big)}=\frac{ Q_{n-1,N}\big(\lambda^{[N]}_k\big)
}{
Q_{N,N}\big(\lambda^{[N]}_k\big)B'_{N+1}\big(\lambda^{[N]}_k\big)}.
\end{align}
\item Alternatively, we can write
\begin{align}\label{eq:discrete_linear_form_typeI}
 w^{\langle N\rangle}_{k,n}= A_{n-1}^{(1)}\big(\lambda^{[N]}_k\big)\mu^{[N]}_{k,1} +\cdots +A_{n-1}^{(p)}\big(\lambda^{[N]}_k\big)\mu^{[N]}_{k,p},
\end{align}
with masses or Christoffel numbers defined as
\begin{align*}
 \mu_{k,1}^{[N]}&\coloneq 
 \frac{\begin{vNiceMatrix}
 A^{(2)}_{n+1} \big(\lambda^{[N]}_k\big)& \Cdots & A^{(p)}_{n+1}\big(\lambda^{[N]}_k\big) \\[2pt]
% A^{(1)}_{n-1} & \Cdots & A^{(p)}_{n-1} \\[2pt]
% A^{(1)}_{n} & \Cdots & A^{(p)}_{n} \\[2pt]
 \Vdots & \Ddots & \Vdots \\[2pt]
 A^{(2)}_{n+p-1} \big(\lambda^{[N]}_k\big)& \Cdots & A^{(p)}_{n+p-1}\big(\lambda^{[N]}_k\big) \\[2pt]
 \end{vNiceMatrix}}{\sum_{l=0}^{N}Q_{l,N}\big(\lambda^{[N]}_k\big)B_{l}\big(\lambda^{[N]}_k\big)},\\
 \mu^{[N]}_{k,2}&\coloneq - \frac{\begin{vNiceMatrix}
 A^{(1)}_{n+1} \big(\lambda^{[N]}_k\big)& A^{(3)}_{n+1} \big(\lambda^{[N]}_k\big)&\Cdots & A^{(p)}_{n+1}\big(\lambda^{[N]}_k\big) \\[2pt]
% A^{(1)}_{n-1} & \Cdots & A^{(p)}_{n-1} \\[2pt]
% A^{(1)}_{n} & \Cdots & A^{(p)}_{n} \\[2pt]
 \Vdots & \Vdots & \Ddots & \Vdots \\[2pt]
 A^{(1)}_{n+p-1} \big(\lambda^{[N]}_k\big)& A^{(3)}_{n+p-1} \big(\lambda^{[N]}_k\big)&\Cdots & A^{(p)}_{n+p-1}\big(\lambda^{[N]}_k\big) \\[2pt]
 \end{vNiceMatrix}}{\sum_{l=0}^{N}Q_{l,N}\big(\lambda^{[N]}_k\big)B_{l}\big(\lambda^{[N]}_k\big)},\\
 &\hspace*{8pt}\vdots \\
 \mu^{[N]}_{k,p}&\coloneq (-1)^{p-1} \frac{\begin{vNiceMatrix}
 A^{(1)}_{n+1} \big(\lambda^{[N]}_k\big)&\Cdots & A^{(p-1)}_{n+1}\big(\lambda^{[N]}_k\big) \\[2pt]
% A^{(1)}_{n-1} & \Cdots & A^{(p)}_{n-1} \\[2pt]
% A^{(1)}_{n} & \Cdots & A^{(p)}_{n} \\[2pt]
 \Vdots & \Ddots & \Vdots \\[2pt]
 A^{(1)}_{n+p-1} \big(\lambda^{[N]}_k\big)&\Cdots & A^{(p-1)}_{n+p-1}\big(\lambda^{[N]}_k\big) \\[2pt]
 \end{vNiceMatrix}}{\sum_{l=0}^{N}Q_{l,N}\big(\lambda^{[N]}_k\big)B_{l}\big(\lambda^{[N]}_k\big)}.
 \end{align*}
%\item The following alternative expression, in terms of truncated polynomials, for the components of the normalized left eigenvectors 
%\begin{align}\label{eq:discrete_linear_form_truncations}
% w^{\langle N\rangle}_{k,n}=\frac{B_{N+1}^{[n]}\big(\lambda^{[N]}_k\big)}{\sum_{l=0}^{N}B_{N+1}^{[l+1]}\big(\lambda^{[N]}_k\big)B_{l}\big(\lambda^{[N]}_k\big)}=\frac{B_{N+1}^{[n]}\big(\lambda^{[N]}_k\big)}{B'_{N+1}\big(\lambda^{[N]}_k\big)},
%\end{align}
%holds. Moreover, for the Christoffel coefficients we have \cite[Chapter 4, Equation 3.11]{nikishin_sorokin}, 
%\begin{align}\label{eq:masses}
% \mu_{k,1}^{[N]}&=\frac{B_{N+1}^{(1)}\big(\lambda^{[N]}_k\big)}{B'_{N+1}\big(\lambda^{[N]}_k\big)}, &
% \mu^{[N]}_{k,2}&=\frac{B_{N+1}^{(2)}\big(\lambda^{[N]}_k\big)}{B'_{N+1}\big(\lambda^{[N]}_k\big)}.
%\end{align}
\item It holds that
 \begin{align}
 \label{eq:kcomponentelefteigenII}
 \begin{bNiceMatrix}
 \mu^{[N]}_{k,1} \\[5pt]
 \mu^{[N]}_{k,2} \\
 \Vdots
 \\
 \mu^{[N]}_{k,p}
 \end{bNiceMatrix}
&= \begin{bNiceMatrix}%[columns-width=2pt]
	1& 0 & \Cdots& && 0 \\
	\nu^{(1)}_1 & 1 & \Ddots&& & \Vdots \\
	\Vdots & \Ddots & \Ddots& && \\
	&&&& &\\&&&&&0\\
	\nu^{(1)}_{p-1} &\Cdots& && \nu^{(p-1)}_{p-1}& 1 
\end{bNiceMatrix}^{-1} \begin{bNiceMatrix}
w^{\langle N\rangle}_{k,1} \\[5pt]
w^{\langle N\rangle}_{k,2} \\
\Vdots \\
w^{\langle N\rangle}_{k,p}
 \end{bNiceMatrix}.
\end{align} 
 \item The corresponding matrices $\mathscr U$ (with columns the right eigenvectors $u_k$ arranged in the standard order) and 
 $\mathscr W$ and (with rows the left eigenvectors $w_k$ arranged in the standard order) satisfy
 \begin{align}\label{eq:UW=I}
\mathscr U\mathscr W=\mathscr W\mathscr U=I_{N+1}.
 \end{align}
\item In terms of the diagonal matrix $D=\diag\big(\lambda^{[N]}_1,\dots,\lambda^{[N]}_{N+1}\big)$ we have
 \begin{align}\label{eq:UDnW=Jn}
 \mathscr UD^n\mathscr W&=\big(T^{[N]}\big)^n, & n&\in\N_0.
\end{align}
\end{enumerate}
\end{pro}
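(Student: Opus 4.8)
The plan is to establish the six enumerated claims essentially in the given order, since each builds on the previous ones, and to reduce everything to the biorthogonality relation between the right eigenvectors $u^{\langle N\rangle}_k = B^{\langle N\rangle}(\lambda^{[N]}_k)$ and the (unnormalized) left eigenvectors $\tilde w^{\langle N\rangle}_k = Q^{\langle N\rangle}(\lambda^{[N]}_k)$. First I would prove this biorthogonality: for $k\neq l$ one evaluates \eqref{eq:CD2} at $x=\lambda^{[N]}_k$, $y=\lambda^{[N]}_l$ and uses $B_{N+1}(\lambda^{[N]}_k)=B_{N+1}(\lambda^{[N]}_l)=0$ to get $\sum_{n=0}^N Q_{n,N}(\lambda^{[N]}_k)B_n(\lambda^{[N]}_l)=0$, i.e. $\tilde w^{\langle N\rangle}_k\cdot u^{\langle N\rangle}_l=0$; for $k=l$ the confluent formula \eqref{eq:CD2_confluent} gives $\tilde w^{\langle N\rangle}_k\cdot u^{\langle N\rangle}_k=\tfrac1{h_N}\big(B'_{N+1}B_N-B'_N B_{N+1}\big)(\lambda^{[N]}_k)=\tfrac1{h_N}B'_{N+1}(\lambda^{[N]}_k)B_N(\lambda^{[N]}_k)$, which is nonzero because the zeros of $B_{N+1}$ are simple and (since $B_N$ and $B_{N+1}$ have no common zeros, as consecutive characteristic polynomials in a Hessenberg recursion) $B_N(\lambda^{[N]}_k)\neq0$. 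Dividing by this scalar gives the normalized $w^{\langle N\rangle}_k$ of item i), and also yields the second equality in \eqref{eq:kcomponentelefteigenI} after using $Q_{N,N}=h_N^{-1}B_N$ from Proposition \ref{pro:properties_Q}(ii).

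Next, item ii) is just the componentwise reading of i): the $n$-th component of $Q^{\langle N\rangle}$ is $Q_{n-1,N}$ (with the index shift built into the definition), so \eqref{eq:kcomponentelefteigenI} is immediate. For item iii) I would expand the determinant $Q_{n-1,N}$ defining the first component along its first row; more precisely, I would use the fact that each $Q_{n,N}$ is a $p\times p$ determinant whose first row is $(A^{(1)}_n,\dots,A^{(p)}_n)$ and whose remaining $p-1$ rows are the fixed rows $(A^{(a)}_{N+1}),\dots,(A^{(a)}_{N+p-1})$. Expanding along the first row writes $Q_{n,N}=\sum_{a=1}^p A^{(a)}_n M_a$ where $M_a$ is $(-1)^{a+1}$ times the minor obtained by deleting column $a$ — and these minors, divided by the normalization $\sum_l Q_{l,N}B_l$, are precisely the $\mu^{[N]}_{k,a}$ in the statement (note the stated minors use rows $n+1,\dots,n+p-1$, which are the same fixed rows shifted, consistent with the index conventions). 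This gives \eqref{eq:discrete_linear_form_typeI}. Item iv) then follows by specializing \eqref{eq:discrete_linear_form_typeI} to $n=1,\dots,p$ and invoking the initial conditions \eqref{eq:initcondtypeI}: the matrix relating the vector $(A^{(a)}_{n-1})_{n=1}^p$ to $(\mu_{k,a})$ is exactly the lower-triangular matrix $\nu$ of \eqref{eq:mic}, so inverting it gives \eqref{eq:kcomponentelefteigenII}.

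For item v), the biorthogonality $\tilde w^{\langle N\rangle}_k\cdot u^{\langle N\rangle}_l=\delta_{kl}\,c_k$ together with the normalization says $\mathscr W\mathscr U=I_{N+1}$ where the rows of $\mathscr W$ are the $w^{\langle N\rangle}_k$ and the columns of $\mathscr U$ are the $u^{\langle N\rangle}_k$; since these are square $(N+1)\times(N+1)$ matrices, a one-sided inverse is a two-sided inverse, giving also $\mathscr U\mathscr W=I_{N+1}$, which is \eqref{eq:UW=I}. Finally, item vi) is the spectral decomposition: $T^{[N]}u^{\langle N\rangle}_k=\lambda^{[N]}_k u^{\langle N\rangle}_k$ means $T^{[N]}\mathscr U=\mathscr U D$, hence $T^{[N]}=\mathscr U D\mathscr W$ by \eqref{eq:UW=I}, and raising to the $n$-th power while using $\mathscr W\mathscr U=I$ telescopes to $(T^{[N]})^n=\mathscr U D^n\mathscr W$, which is \eqref{eq:UDnW=Jn}. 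The only genuinely delicate point is the nonvanishing of the pairing scalar $c_k=\tfrac1{h_N}B'_{N+1}(\lambda^{[N]}_k)B_N(\lambda^{[N]}_k)$ — equivalently that $B_N$ and $B_{N+1}$ share no zero — which I would justify from the Hessenberg structure (a common zero would force, via the truncated eigenvalue equation \eqref{eq:truncated_Hessenberg}, a right eigenvector of $T^{[N-1]}$ inside $T^{[N]}$, contradicting that the ladder of characteristic polynomials has strictly interlacing/simple zeros under our standing simplicity assumption); everything else is routine determinant expansion and linear algebra.
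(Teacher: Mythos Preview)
Your proof is correct and follows essentially the same route as the paper. The paper is terser: for (i) it simply invokes the general linear-algebra fact that left and right eigenvectors for distinct simple eigenvalues are automatically orthogonal, whereas you derive the same biorthogonality directly from the Christoffel--Darboux formula \eqref{eq:CD2}; both arguments are equivalent and the paper also uses \eqref{eq:CD2_confluent} for (ii), so the CD machinery enters either way. Your added remark on why the normalization scalar $h_N^{-1}B'_{N+1}(\lambda^{[N]}_k)B_N(\lambda^{[N]}_k)$ is nonzero fills a point the paper leaves implicit.
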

\begin{proof}
\begin{enumerate}
 \item As the zeros are simple we have that left and right eigenvectors are orthogonal, i.e., $\tilde w^{\langle N\rangle}_k u^{\langle N\rangle}_l=\delta_{k,l} \sum_{r=0}^{N}Q_{r,N}\big(\lambda^{[N]}_k\big)B_{r}\big(\lambda^{[N]}_k\big)$. Hence, we divide by $\sum_{r=0}^{N}Q_{r,N}\big(\lambda^{[N]}_k\big)B_{r}\big(\lambda^{[N]}_k\big)$ to get normalized left eigenvectors.
\item It follows from the previous result and Equation \eqref{eq:CD2_confluent}. 
\item 
In Equation \eqref{eq:QNn} expand the determinant in $Q_{n-1,N}$ along its first row.
\item Use \eqref{eq:discrete_linear_form_typeI} for the first $p$ entries
\begin{align*}
 \begin{bNiceMatrix}
w^{\langle N\rangle}_{k,1} \\
\Vdots \\
w^{\langle N\rangle}_{k,p}
 \end{bNiceMatrix} 
&= \begin{bNiceMatrix}
 A^{(1)}_{0} \big(\lambda^{[N]}_k\big)&\Cdots & A^{(p)}_{0}\big(\lambda^{[N]}_k\big) \\[2pt]
% A^{(1)}_{n-1} & \Cdots & A^{(p)}_{n-1} \\[2pt]
% A^{(1)}_{n} & \Cdots & A^{(p)}_{n} \\[2pt]
 \Vdots & \Ddots & \Vdots \\[2pt]
 A^{(1)}_{p} \big(\lambda^{[N]}_k\big)&\Cdots & A^{(p)}_{p}\big(\lambda^{[N]}_k\big) \\[2pt]
 \end{bNiceMatrix} \begin{bNiceMatrix}
 \mu^{[N]}_{k,1} \\
 \Vdots
 \\
 \mu^{[N]}_{k,p}
 \end{bNiceMatrix}
\end{align*} 
and the initial conditions~\eqref{eq:initcondtypeI}
 \begin{align*}
 \begin{bNiceMatrix}
 A^{(1)}_{0} \big(\lambda^{[N]}_k\big)&\Cdots & A^{(p)}_{0}\big(\lambda^{[N]}_k\big) \\[2pt]
 A^{(1)}_{1} \big(\lambda^{[N]}_k\big)&\Cdots & A^{(p)}_{1}\big(\lambda^{[N]}_k\big) \\[2pt]
% A^{(1)}_{n-1} & \Cdots & A^{(p)}_{n-1} \\[2pt]
% A^{(1)}_{n} & \Cdots & A^{(p)}_{n} \\[2pt]
 \Vdots & \Ddots & \Vdots \\[2pt]
 A^{(1)}_{p} \big(\lambda^{[N]}_k\big)&\Cdots & A^{(p)}_{p}\big(\lambda^{[N]}_k\big) \\[2pt]
 \end{bNiceMatrix} = \begin{bNiceMatrix}%[columns-width=2pt]
 1& 0 & \Cdots& && 0 \\
 \nu^{(1)}_1 & 1 & \Ddots&& & \Vdots \\
 \Vdots & \Ddots & \Ddots& && \\
 &&&& &\\&&&&&0\\
 \nu^{(1)}_{p-1} &\Cdots& && \nu^{(p-1)}_{p-1}& 1 
\end{bNiceMatrix}
 \end{align*}
to obtain the result.
 \item It follows from the biorthogonality of the left and right eigenvectors.
 \item Notice that $\mathscr UD^n =\big(T^{[N]}\big)^n\mathscr U $ and use $\mathscr U^{-1}=\mathscr W$ to get $\mathscr UD^n\mathscr W=\big(T^{[N]}\big)^n$ as desired.
 \end{enumerate}

% \begin{align*}
% w^{\langle N\rangle}_{k,1}&=\frac{A_{N+1}^{(2)}\big(\lambda^{[N]}_k\big)}{\sum\limits_{l=1}^{N+1}\Big(A_{N+1}^{(2)}\big(\lambda^{[N]}_k\big) A_{l-1}^{(1)}\big(\lambda^{[N]}_k\big) -A_{N+1}^{(1)}\big(\lambda^{[N]}_k\big) A_{l-1}^{(2)}\big( \lambda_k^{[N]}\big)\Big)B_{l-1}\big(\lambda^{[N]}_k\big)},\\
% w^{\langle N\rangle}_{k,2}&=\frac{A_{N+1}^{(2)}\big(\lambda^{[N]}_k\big)\nu-A_{N+1}^{(1)}
% \big(\lambda^{[N]}_k\big)}{\sum\limits_{l=1}^{N+1}\Big(A_{N+1}^{(2)}\big(\lambda^{[N]}_k\big) A_{l-1}^{(1)}\big(\lambda^{[N]}_k\big) -A_{N+1}^{(1)}\big(\lambda^{[N]}_k\big) A_{l-1}^{(2)}\big( \lambda_k^{[N]}\big)\Big)B_{l-1}\big(\lambda^{[N]}_k\big)},
% \end{align*}
% and, recalling Theorem \ref{theorem:CD}, the result follows.
\end{proof}

\begin{rem}
	Multiple quadrature formulas have been studied in \cite{Coussement-VanAssche,Ulises-Illan-Guillermo}. In particular, in \cite{Ulises-Illan-Guillermo} the convergence properties of simultaneous quadrature rules of a given function with respect to different weights is studied.
%\begin{enumerate}
%% \item If we compare the expressions for the masses in \eqref{eq:mass1} and \eqref{eq:mass2}, in this banded Hessenberg scenario, with the simpler Jacobi case, we immediately see that positivity is not ensured in this case. 
%%Hence, to analyze its positivity it seems more promising \eqref{eq:masses}, and as we will show that will be the case for oscillatory matrices.
%\item Multiple quadrature formulas have been studied in \cite{Ulises-Illan-Guillermo,Coussement-VanAssche}. In particular, in \cite{Ulises-Illan-Guillermo} the convergence properties of simultaneous quadrature rules of a given function with respect to different weights is studied.
%\end{enumerate}
\end{rem}

Let us consider $p$ %\textcolor{blue}{singular} \textcolor{red}{discrete}
 measures supported at the zeros of $B_{N+1}$,
\begin{align}\label{eq:discrete_mesures}
 \mu^{[N]}_{a}&\coloneq \sum_{j=1}^{N+1}\mu^{[N]}_{j,a}\delta\big(x-\lambda^{[N]}_j\big), &a&\in\{ 1, \dots, p\}.
\end{align}
Let us show that the recursion polynomials of type II, $\{B_n\}_{n=0}^N$, and of type I, $\big\{A^{(a)}_n\big\}_{n=0}^N$, $a = 1, \dots , p$, are finite sequences of multiple discrete orthogonal polynomials with respect to these measures.
\begin{teo}[Multiple discrete biorthogonalities]\label{theorem:birothoganality}
 Assume that the recursion polynomials $B_{N+1}$ have simple zeros $\{\lambda_k^{[N]}\}_{k=1}^{N+1}$.
The following biorthogonal relations hold
 \begin{align*}
 \left\langle A^{(1)}_{k}\mu^{[N]}_1+\cdots + A^{(p)}_{k}\mu^{[N]}_p,B_l\right\rangle&=\delta_{k,l}, &k,l&\in\{0,\dots,N\}.
 \end{align*}
\end{teo}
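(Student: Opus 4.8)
The plan is to unfold the pairing and recognise it as a single matrix entry of the completeness relation \eqref{eq:UW=I}. First I would substitute the definition \eqref{eq:discrete_mesures} of the discrete measures, so that
\begin{align*}
 \left\langle A^{(1)}_{k}\mu^{[N]}_1+\cdots+A^{(p)}_{k}\mu^{[N]}_p,B_l\right\rangle
 =\sum_{a=1}^{p}\sum_{j=1}^{N+1}\mu^{[N]}_{j,a}\,A^{(a)}_{k}\big(\lambda^{[N]}_j\big)\,B_l\big(\lambda^{[N]}_j\big).
\end{align*}
Interchanging the two sums, for each fixed $j$ the inner sum $\sum_{a=1}^{p}\mu^{[N]}_{j,a}A^{(a)}_{k}\big(\lambda^{[N]}_j\big)$ is, by formula \eqref{eq:discrete_linear_form_typeI} read with component index $n=k+1$ (which is admissible precisely because $k\in\{0,\dots,N\}$), exactly the $(k+1)$-th entry $w^{\langle N\rangle}_{j,k+1}$ of the normalized left eigenvector of $T^{[N]}$ attached to $\lambda^{[N]}_j$.

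Second, since $u^{\langle N\rangle}_j=B^{\langle N\rangle}\big(\lambda^{[N]}_j\big)$ has entries $B_0\big(\lambda^{[N]}_j\big),\dots,B_N\big(\lambda^{[N]}_j\big)$, the remaining factor is $B_l\big(\lambda^{[N]}_j\big)=u^{\langle N\rangle}_{j,l+1}$. Hence the pairing equals $\sum_{j=1}^{N+1}u^{\langle N\rangle}_{j,l+1}\,w^{\langle N\rangle}_{j,k+1}$, which is precisely the $(l+1,k+1)$ entry of the product $\mathscr U\mathscr W$, where $\mathscr U$ carries the $u^{\langle N\rangle}_j$ as columns and $\mathscr W$ the $w^{\langle N\rangle}_j$ as rows. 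By \eqref{eq:UW=I} we have $\mathscr U\mathscr W=I_{N+1}$, so that entry is $\delta_{l+1,k+1}=\delta_{k,l}$, which is the assertion.

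The only substantial ingredient is the completeness relation $\mathscr U\mathscr W=I_{N+1}$, which is already available (it follows from the biorthogonality $\mathscr W\mathscr U=I_{N+1}$, since a left inverse of a square matrix is two-sided); everything else is just tracking the shift between polynomial degree $k$ (resp.\ $l$) and vector index $k+1$ (resp.\ $l+1$). I expect the only place demanding care is to check that the masses $\mu^{[N]}_{j,a}$ entering here are the same objects as in the Spectral properties proposition — in particular that the determinantal expressions there are genuinely independent of the auxiliary index $n$ in which they are written, which is what makes \eqref{eq:discrete_linear_form_typeI} consistent — and to note that nothing beyond simplicity of the zeros of $B_{N+1}$ is used; no positivity or reality of the $\lambda^{[N]}_j$ is needed at this stage.
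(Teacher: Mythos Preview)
Your argument is correct and follows essentially the same route as the paper: both use \eqref{eq:discrete_linear_form_typeI} to identify the pairing with an entry of $\mathscr U\mathscr W$ and then invoke \eqref{eq:UW=I}. Your write-up simply makes explicit the index bookkeeping that the paper compresses into the phrase ``biorthogonality follows immediately.''
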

\begin{proof}
The matrices
 \begin{align}\label{eq:UW}
 \mathscr U&=\begin{bNiceMatrix}
 B_0\big(\lambda^{[N]}_1 \big)&\Cdots & B_0\big(\lambda^{[N]}_{N+1} \big)\\
 \Vdots & \Ddots & \Vdots\\
 B_N\big(\lambda^{[N]}_1 \big)& \Cdots & B_N\big(\lambda^{[N]}_{N+1}\big)
 \end{bNiceMatrix}, &
 \mathscr W&=\begin{bNiceMatrix}
 w^{[N]}_{1,1} & \Cdots & w^{[N]}_{1,N+1}\\
 \Vdots & \Ddots & \Vdots\\
 w^{[N]}_{N+1,1}& \Cdots & w^{[N]}_{N+1,N+1}
 \end{bNiceMatrix}.
\end{align}
satisfy $UW=I$, and using \eqref{eq:discrete_linear_form_typeI} biorthogonality follows immediately.
\end{proof}

The previous discrete biorthogonality implies a set of multiple orthogonal relations of types I and II.

We can check, using the initial conditions \eqref{eq:initcondtypeI} and the recursion relation \eqref{eq:recursion_dual_A} that 
\begin{align*}
%\deg A^{(1)}_{0}=0, \deg A^{(2)}_{0}=0 \dots \deg A^{(p)}_{0}=0\\
%\vdots \\
%\deg A^{(1)}_{p-1}=0, \deg A^{(2)}_{p-1}=0 \dots \deg A^{(p)}_{p-1}=0\\
\deg A^{(1)}_{p}=1, \deg A^{(2)}_{p}=0 , \dots , \deg A^{(p)}_{p}=0 , \\
\deg A^{(1)}_{p+1}=1, \deg A^{(2)}_{p+1}=1 , \dots , \deg A^{(p)}_{p+1}=0 .
\end{align*}
In general, it holds that $\deg A^{(r)}_{kp+j} = k$, for $r = 1, \dots, j+1$ and 
$\deg A^{(r)}_{kp+j}=k-1$ for $r = j+2, \dots , p$.

 \begin{coro}[Multiple discrete orthogonalities]
 Assume that the recursion polynomials $B_{N+1}$ have simple zeros $\big\{\lambda_k^{[N]}\big\}_{k=1}^{N+1}$.
 For $k$ such that $kp+j \leqslant N$, the following type II multiple orthogonal conditions are satisfied
 \begin{align*}
 \left \langle\mu^{[N]}_r, x^m B_{kp +j}\right\rangle&=0, & m&=0,\dots,k,& r&= 1, \dots, j,\\
 \left \langle\mu^{[N]}_r, x^m B_{kp+j}\right\rangle&=0, & m&=0,\dots,k-1,& r&= j+1 ,\dots, p,
 \end{align*}
with $j = 0 , \ldots , p-1$.
For the recursion polynomials of type I we have the following discrete type I multiple orthogonality 
\begin{align*}
\left\langle A^{(1)}_{kp+j}\mu^{[N]}_1+\cdots +A^{(p)}_{kp+j}\mu^{[N]}_p ,x^n\right\rangle&=0, & n&\in\{0,1,\dots,kp+j-1\}, & k\in\{1,\dots, N\}.
\end{align*}
 \end{coro}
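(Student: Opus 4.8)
The plan is to deduce both families of relations from the multiple discrete biorthogonality of Theorem~\ref{theorem:birothoganality}, together with the degree identities displayed just above, by a change of basis.

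For the type~I relations I would argue directly. Fix $k$, $j\in\{0,\dots,p-1\}$ with $kp+j\leqslant N$, and $n\in\{0,1,\dots,kp+j-1\}$. Since $\deg B_l=l$, the polynomials $B_0,\dots,B_n$ span all polynomials of degree at most $n$, so I can write $x^n=\sum_{l=0}^{n}c_lB_l$. Applying the linear form $A^{(1)}_{kp+j}\mu^{[N]}_1+\dots+A^{(p)}_{kp+j}\mu^{[N]}_p$ and invoking $\langle A^{(1)}_{kp+j}\mu^{[N]}_1+\dots+A^{(p)}_{kp+j}\mu^{[N]}_p,B_l\rangle=\delta_{kp+j,l}$ from Theorem~\ref{theorem:birothoganality} annihilates every term, because $l\leqslant n<kp+j$; this settles the easy half.

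For the type~II relations I would set $n\coloneq kp+j\leqslant N$ and introduce the linear space $V$ of vectors of polynomials $(q^{(1)},\dots,q^{(p)})$ with $\deg q^{(a)}\leqslant k$ for $a\in\{1,\dots,j\}$ and $\deg q^{(a)}\leqslant k-1$ for $a\in\{j+1,\dots,p\}$, noting that $\dim V=j(k+1)+(p-j)k=kp+j=n$. The core step is to prove that the $n$ type~I vectors $(A^{(1)}_l,\dots,A^{(p)}_l)$ with $l=0,\dots,n-1$ form a basis of $V$. Membership in $V$ I would read off the displayed degree identities, comparing $l=k'p+j'<kp+j$ against the defining bounds of $V$ in the two cases $k'<k$ and $k'=k$ (whence $j'\leqslant j-1$); in each entry the required inequality comes out. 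Linear independence I would get from the fact that a relation $\sum_{l=0}^{n-1}c_l(A^{(1)}_l,\dots,A^{(p)}_l)=0$ yields, entrywise, $\sum_l c_lA^{(a)}_l=0$, hence $\sum_{l=0}^{n-1}c_l(A^{(1)}_l\mu^{[N]}_1+\dots+A^{(p)}_l\mu^{[N]}_p)=0$ as a measure, so that pairing with $B_0,\dots,B_{n-1}$ and using Theorem~\ref{theorem:birothoganality} forces all $c_l=0$; the dimension count then promotes this independent family to a basis.

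Granting the claim, for $r$ and $m$ in the stated range — so $m\leqslant k$ when $r\leqslant j$, and $m\leqslant k-1$ when $r>j$ — the vector with $x^m$ in the $r$-th entry and $0$ elsewhere lies in $V$, hence equals $\sum_{l=0}^{n-1}c_l(A^{(1)}_l,\dots,A^{(p)}_l)$ for some constants $c_l$. Reading off each entry and summing against the corresponding measure, this says $x^m\,\d\mu^{[N]}_r=\sum_{l=0}^{n-1}c_l(A^{(1)}_l\mu^{[N]}_1+\dots+A^{(p)}_l\mu^{[N]}_p)$ as measures; pairing with $B_{kp+j}$ and using $\langle A^{(1)}_l\mu^{[N]}_1+\dots+A^{(p)}_l\mu^{[N]}_p,B_{kp+j}\rangle=\delta_{l,kp+j}=0$ for $l\leqslant n-1$ then gives $\langle\mu^{[N]}_r,x^mB_{kp+j}\rangle=0$. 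I expect the main obstacle to be the bookkeeping in the basis claim — checking the degree inequalities uniformly along the staircase and getting the dimension count exactly right — rather than anything conceptually delicate.
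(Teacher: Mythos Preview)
Your proposal is correct and follows precisely the route the paper indicates: the paper gives no explicit proof of this corollary, stating only that it follows from the discrete biorthogonality of Theorem~\ref{theorem:birothoganality} together with the degree identities $\deg A^{(r)}_{kp+j}=k$ for $r\leqslant j+1$ and $\deg A^{(r)}_{kp+j}=k-1$ for $r\geqslant j+2$. Your argument supplies exactly the details one would expect---expanding $x^n$ in the $B_l$ basis for the type~I relations, and for the type~II relations performing the dimension count $\dim V=j(k+1)+(p-j)k=kp+j$ and verifying that the staircase degree pattern makes $\{(A^{(1)}_l,\dots,A^{(p)}_l)\}_{l<kp+j}$ a basis of $V$---so there is nothing to correct or contrast.
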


Frequently, see for example \cite{VanAssche2}, the following notation is used for these recursion polynomials of types I and II, respectively
%\textcolor{red}{latex!!!}
\begin{align*}
%%A^{(r)}_{(\underset{\mathclap{j+1 \text{ times}}}{k+1,\dots, k+1},k, \dots,k)}
A^{(a)}_{
		\begin{pNiceMatrix}[small]
			k+1, & \Cdots, & k+1, & k, & \Cdots & k 
%			\CodeAfter
%			\UnderBrace[shorten]{1-1}{1-3}{\mathclap{\tiny \text{$(j+1)$-times}}}
	\end{pNiceMatrix}
}
&\coloneq A^{(a)}_{kp+j}, &
 a& \in \{1, \dots , p\}, & B_{
 	\begin{pNiceMatrix}[small]
 		k+1, & \Cdots, & k+1, & k, & \Cdots & k 
% 		\CodeAfter
% 		\UnderBrace[shorten]{1-1}{1-3}{\mathclap{ \text{\tiny $j$-times}}}
 	\end{pNiceMatrix}
 }\coloneq B_{kp+j}.
\end{align*}
In the first expression $k+1$ is repeated $j+1$ times, while in the second $j$ times.
%EL VECTOR TIENE $j+1$, $k+1$de manera que el grado total es $kp+j+1$.
%and for the type II recursion polynomials
%$$. %&& r \in\{1, \dots , p\}

%EL VECTOR TIENE $j$, $k+1$de manera que el grado total es $kp+j$.
\begin{pro}[Lebesgue--Stieltjes representation of the measures]
%CAMBIAR EL NOMBRE DE LA MATRIZ CONSTANTE $\nu$
In terms of the following piecewise continuous functions
\begin{align*}
 \psi^{[N]}_{a}&\coloneq \begin{cases}
 0, & x<\lambda^{[N]}_{N+1},\\[2pt]
 \mu^{[N]}_{1,a}+\cdots+\mu^{[N]}_{k,a}, & \lambda^{[N]}_{k+1}\leqslant x\leqslant \lambda^{[N]}_{k}, \quad k\in\{1,\dots,N\},\\[2pt]
 \mu^{[N]}_{1,a}+\cdots+\mu^{[N]}_{N+1,a} = (\nu^{-\top})_{1,a},
 & x > \lambda^{[N]}_{1},
 \end{cases}
\end{align*}
we can write 
$\mu_{a} ^{[N]}=\d\psi^{[N]}_{a}$ for $a\in\{1, \dots, p\}$.
\end{pro}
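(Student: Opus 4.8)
The plan is to split the statement into a routine part and a single genuine computation. The routine part is the correspondence between a finitely atomic measure and its (piecewise constant, hence piecewise continuous) distribution function: the function $\psi^{[N]}_a$ is constant on $(-\infty,\lambda^{[N]}_{N+1})$, on each closed interval $[\lambda^{[N]}_{k+1},\lambda^{[N]}_k]$ with $k=1,\dots,N$, and on $(\lambda^{[N]}_1,\infty)$, it has bounded variation, and its only discontinuities are the $N+1$ spectral nodes $\lambda^{[N]}_1>\cdots>\lambda^{[N]}_{N+1}$; the difference of the two branch values meeting at $\lambda^{[N]}_j$ is precisely the atomic weight $\mu^{[N]}_{j,a}$ (at the two extreme nodes one uses the branch values $0$ and $(\nu^{-\top})_{1,a}$). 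Thus $\d\psi^{[N]}_a=\sum_{j=1}^{N+1}\mu^{[N]}_{j,a}\,\delta\big(x-\lambda^{[N]}_j\big)=\mu^{[N]}_a$ by \eqref{eq:discrete_mesures}, so everything reduces to the identity implicit in the third branch, namely $\mu^{[N]}_{1,a}+\cdots+\mu^{[N]}_{N+1,a}=(\nu^{-\top})_{1,a}$ for $a\in\{1,\dots,p\}$.

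To establish that identity I would start from \eqref{eq:kcomponentelefteigenII}, which gives, for each $k\in\{1,\dots,N+1\}$, the column vector $\big(\mu^{[N]}_{k,1},\dots,\mu^{[N]}_{k,p}\big)^\top=\nu^{-1}\big(w^{\langle N\rangle}_{k,1},\dots,w^{\langle N\rangle}_{k,p}\big)^\top$, with $\nu$ the lower unitriangular matrix \eqref{eq:mic}. Summing over $k=1,\dots,N+1$ and using that $\nu^{-1}$ is a fixed linear map reduces the task to computing $\sum_{k=1}^{N+1}w^{\langle N\rangle}_{k,n}$ for $n=1,\dots,p$. The observation that does the job is that $\big(\sum_k w^{\langle N\rangle}_{k,1},\dots,\sum_k w^{\langle N\rangle}_{k,N+1}\big)$ is the row vector $\1^\top\mathscr W$, where $\1$ is the all-ones column vector in $\R^{N+1}$, and that $\1^\top$ is exactly the first row of $\mathscr U$, because the columns of $\mathscr U$ are the vectors $B^{\langle N\rangle}\big(\lambda^{[N]}_k\big)$ and $B_0=1$. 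Hence, using the biorthogonality relation $\mathscr U\mathscr W=I_{N+1}$ of \eqref{eq:UW=I},
\begin{align*}
\1^\top\mathscr W=(\text{first row of }\mathscr U)\,\mathscr W=\text{first row of }(\mathscr U\mathscr W)=\text{first row of }I_{N+1}=e_1^\top,
\end{align*}
so that $\sum_{k=1}^{N+1}w^{\langle N\rangle}_{k,n}=\delta_{n,1}$. Feeding this back, $\big(\sum_k\mu^{[N]}_{k,1},\dots,\sum_k\mu^{[N]}_{k,p}\big)^\top=\nu^{-1}e_1$, i.e. the first column of $\nu^{-1}$, whose $a$-th entry is $(\nu^{-1})_{a,1}=(\nu^{-\top})_{1,a}$; this is the required identity and the proposition follows.

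I do not expect a real obstacle here. The entire argument hinges on one point: the monic normalization $B_0=1$ makes the all-ones vector the first row of $\mathscr U$, so that the spectral identity $\mathscr U\mathscr W=I_{N+1}$ instantly collapses the column sums of the left-eigenvector matrix $\mathscr W$ to a coordinate vector; after that it is a one-line application of $\nu^{-1}$. The passage from the atomic measure $\mu^{[N]}_a$ to the step function $\psi^{[N]}_a$ is entirely routine, requiring only mild attention to the decreasing ordering $\lambda^{[N]}_1>\cdots>\lambda^{[N]}_{N+1}$ of the nodes when reading off the jumps.
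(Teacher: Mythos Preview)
Your proof is correct and follows essentially the same approach as the paper: both reduce the claim to the sum identity $\sum_{k}\mu^{[N]}_{k,a}=(\nu^{-\top})_{1,a}$, derive it from \eqref{eq:kcomponentelefteigenII} together with the biorthogonality $\mathscr U\mathscr W=I_{N+1}$, and use that the first row of $\mathscr U$ is the all-ones vector since $B_0=1$. The paper packages the same computation as a matrix identity (your \eqref{eq:mu_W_e_nu}) multiplied on the left by $\begin{bNiceMatrix}1&\Cdots&1\end{bNiceMatrix}$, which is just the transposed form of your column-sum argument.
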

\begin{proof}
It is clear that $\d\psi^{[N]}_a=\mu_{a} ^{[N]}$.

Let us write \eqref{eq:kcomponentelefteigenII} in the alternative form
 \begin{align}\label{eq:mu_W_e_nu}
 \begin{bNiceMatrix}
 \mu^{[N]}_{1,1} &\Cdots&\mu^{[N]}_{1,p} \\
 \Vdots & & \Vdots \\
 \mu^{[N]}_{N+1,1} &\Cdots&\mu^{[N]}_{N+1,p}
 \end{bNiceMatrix}
 &= \begin{bNiceMatrix}
 w^{\langle N\rangle}_{1,1} &\Cdots &w^{\langle N\rangle}_{1,p} \\
 \Vdots & & \Vdots \\
 w^{\langle N\rangle}_{N+1,1} &\Cdots &w^{\langle N\rangle}_{N+1,p}
 \end{bNiceMatrix} \begin{bNiceMatrix}%[columns-width=2pt]
 1& 0 & \Cdots& && 0 \\
 \nu^{(1)}_1 & 1 & \Ddots&& & \Vdots \\
 \Vdots & \Ddots & \Ddots& && \\
 &&&& &\\&&&&&0\\
 \nu^{(1)}_{p-1} &\Cdots& && \nu^{(p-1)}_{p-1}& 1 
\end{bNiceMatrix}^{-\top} .
\end{align} 
Multiplying on the left this equation by $ \begin{bNiceMatrix}1 &\Cdots &1 \end{bNiceMatrix}$ and using the biorthogonality $UW = I$, we get
\begin{align*}
\begin{bNiceMatrix}1 &\Cdots &1 \end{bNiceMatrix} \begin{bNiceMatrix}
 \mu^{[N]}_{1,1}&\Cdots&\mu^{[N]}_{1,p} \\
 \Vdots & & \Vdots \\
 \mu^{[N]}_{N+1,1} &\Cdots&\mu^{[N]}_{N+1,p}
 \end{bNiceMatrix} = \begin{bNiceMatrix}1 & 0 & \Cdots &0 \end{bNiceMatrix}
\begin{bNiceMatrix}%[columns-width=2pt]
	1& 0 & \Cdots& && 0 \\
	\nu^{(1)}_1 & 1 & \Ddots&& & \Vdots \\
	\Vdots & \Ddots & \Ddots& && \\
	&&&& &\\&&&&&0\\
	\nu^{(1)}_{p-1} &\Cdots& && \nu^{(p-1)}_{p-1}& 1 
\end{bNiceMatrix}^{-\top} ,
\end{align*}
so that $ \mu^{[N]}_{1,a}+\cdots+\mu^{[N]}_{N+1,a} = (\nu^{-\top})_{1,a}$.
\end{proof}

\section{Spectral theory}

We now introduce the very important idea of positive bidiagonal factorization. This factorization is very natural for banded matrices as all the subdiagonals may be constructed in terms of simpler bidiagonal matrices. 
\begin{defi}[Positive bidiagonal factorization]
We say that a banded Hessenberg matrix $ T$ as in \eqref{eq:monic_Hessenberg} admits a positive bidiagonal factorization if
%the following factorization holds
 \begin{align}\label{eq:bidiagonal}
 T= L_{1} L_{2} \cdots L_{p} U,
 \end{align}
with bidiagonal matrices given respectively by 
\begin{align}\label{eq:bidiagonal_factors}
\left\{
\begin{aligned}
	L_k&\coloneq \begin{bNiceMatrix}[columns-width =auto]
 1 &0&\Cdots&&&\\
 ( L_{k} )_{1,0}& 1 &\Ddots&& &\\
 0& ( L_{k} )_{2,1}& 1& &&\\
 \Vdots&\Ddots& ( L_{k} )_{3,2}& 1 & &\\
 && &\Ddots & \Ddots&\\
 \phantom{h}&\phantom{h}&\phantom{h}&\phantom{h}&\phantom{h}&\phantom{h}&\phantom{h}
% \CodeAfter\line{3-1}{6-1}\line{3-1}{6-4} \line{4-3}{6-6}\line{4-4}{5-6} 
% \line{1-2}{1-6}
% \line{1-2}{4-6} 
% \SubMatrix[{1-1}{5-6}]
 \end{bNiceMatrix}, & (L_k)_{j+1,j}&=\alpha_{k+1+j(p+1)}, & j&\in\N_0,\\
U& \coloneq
 \left[\begin{NiceMatrix}[columns-width = .9cm]
U_{0,0}& 1 &0&\Cdots&\\
 0& U_{1,1}& 1 &\Ddots&\\
 \Vdots&\Ddots&U_{2,2}&\Ddots&\\
 & & \Ddots &\Ddots &
 \end{NiceMatrix}\right], & U_{j,j}&=\alpha{1+j(p+1)}, & j&\in\N_0,
\end{aligned}\right.
\end{align}
and such that the positivity constraints $\alpha_i >0,$ for $i\in\N$, are satisfied. 
\end{defi}
% The following relations for the matrix entries hold 
% \begin{align}\label{eq:alphas}
% \left\{\begin{aligned}
% c_n & = \alpha_{3n+1} + \alpha_{3n} + \alpha_{3n-1} , \\
% b_n & = \alpha_{3n} \alpha_{3n-2}+ \alpha_{3n-1} \alpha_{3n-2}+ \alpha_{3n-1} \alpha_{3n-3}, \\
% T_n & = \alpha_{3n-1} \alpha_{3n-3} \alpha_{3n-5}.
% \end{aligned}\right.
% \end{align}
\begin{rem}
	Notice that, $L_1,\dots, L_p,U\in \operatorname{InTN}$. %The positivity condition is the requirement that $ \alpha_i>0$, $i\in\N$.
\end{rem}
\begin{pro}
	The above positive bidiagonal factorization of $T$ induces the following positive bidiagonal factorization for the leading principal submatrix $ T^{[N]}$ given in Definition \ref{def: Jtruncated} 
 \begin{align}\label{eq:bidiagonal_HessenbergN}
 T^{[N]}= L^{[N]}_{1} L_{2}^{[N]} \cdots L_{p}^{[N]} U^{[N]},
 \end{align}
 with bidiagonal matrices given by
\begin{align}\label{eq:L1L2U_truncated}
 L^{[N]}_{k}&=
 \begin{bNiceMatrix}
 1 &0&\Cdots&&0\\
 \alpha_{k+1} &\Ddots &\Ddots&&\\
 0& \alpha_{k+1+(p+1)} & & &\Vdots\\
 \Vdots& \Ddots& \Ddots & \Ddots&0\\
 0&\Cdots&0& \alpha_{k+1+(p+1)N}&1
 \end{bNiceMatrix},
%& L_{2}^{[N]}&=
% \begin{bNiceMatrix}
% 1 &0&\Cdots&&0\\
% \alpha_{3} &\Ddots &\Ddots&&\\
% 0& \alpha_{4+p} & & &\Vdots\\
% \Vdots& \Ddots& \Ddots & \Ddots&0\\
% 0&\Cdots&0& \alpha_{(p+1)N+3}&1
% \end{bNiceMatrix},
& 
U^{[N]}& =
 \left[\begin{NiceMatrix}
 \alpha_1 & 1 &0&\Cdots&0\\
 0& \alpha_{1+(p+1)} & \Ddots &\Ddots&\Vdots\\
 \Vdots&\Ddots& \alpha_{1+2(p+1)}&&0\\
 & & \Ddots &\Ddots &1\\
 0 &\Cdots&&0& \alpha_{1+(p+1)N}
 \end{NiceMatrix}\right].
\end{align} 
\end{pro}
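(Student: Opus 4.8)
The plan is to verify the factorization by restricting each infinite bidiagonal factor in \eqref{eq:bidiagonal} to the relevant index set and checking that the product of the restrictions reproduces $T^{[N]}$. The key observation is that bidiagonal matrices interact very well with leading-principal truncation: if $L$ is lower bidiagonal (ones on the diagonal, the subdiagonal entries $(L)_{j+1,j}$ below), then its $(N+1)\times(N+1)$ leading principal submatrix $L^{[N]}$ only involves the entries $(L)_{j+1,j}$ for $j=0,\dots,N-1$, and likewise $U^{[N]}$ involves $U_{j,j}$ for $j=0,\dots,N$ together with the ones on the first superdiagonal. Reading off these entries from \eqref{eq:bidiagonal_factors} with the parametrization $(L_k)_{j+1,j}=\alpha_{k+1+j(p+1)}$ and $U_{j,j}=\alpha_{1+j(p+1)}$ gives exactly the matrices $L^{[N]}_k$ and $U^{[N]}$ displayed in \eqref{eq:L1L2U_truncated}, so the content of the proposition is precisely the identity $T^{[N]}=L^{[N]}_1\cdots L^{[N]}_p U^{[N]}$.

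First I would establish the general truncation lemma for a product of banded matrices whose nonzero entries lie on a band bounded below by the main diagonal: namely, that if $M=M_1\cdots M_r$ where each $M_i$ is lower triangular banded (possibly with entries on some superdiagonals, but all with lower bandwidth controlled), then $(M_1\cdots M_r)^{[N]}$ need not equal $M_1^{[N]}\cdots M_r^{[N]}$ in general — the obstruction being the ``spillover'' from columns/rows beyond index $N$. However, for our specific factors the structure is favorable: each $L_k$ is lower triangular, so $L_k^{[N]}$ is the honest leading principal block and $(L_1\cdots L_p)^{[N]}=L_1^{[N]}\cdots L_p^{[N]}$ holds exactly because lower triangular matrices form a subalgebra closed under truncation (no entries from beyond index $N$ can feed back into the top-left block under left-multiplication by lower triangular matrices). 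The only factor with a superdiagonal is $U$, so I would isolate the single step $(L_1\cdots L_p\, U)^{[N]}$ versus $(L_1\cdots L_p)^{[N]} U^{[N]}$.

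Concretely, write $L\coloneq L_1\cdots L_p$, which is lower triangular with bandwidth $p$ (nonzero entries on diagonals $0,-1,\dots,-p$). Then $(LU)_{i,j}=\sum_k L_{i,k}U_{k,j}$; since $U$ is bidiagonal upper, $U_{k,j}$ is nonzero only for $k=j$ or $k=j-1$, so $(LU)_{i,j}=L_{i,j}U_{j,j}+L_{i,j-1}$. For $i,j\le N$ this expression involves only $L_{i,j},L_{i,j-1}$ with $i,j\le N$, i.e.\ only entries of $L^{[N]}$, and only $U_{j,j}$ with $j\le N$, i.e.\ only entries of $U^{[N]}$; moreover the same formula computes $(L^{[N]}U^{[N]})_{i,j}$. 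Hence $(LU)^{[N]}=L^{[N]}U^{[N]}$, and combined with $(L_1\cdots L_p)^{[N]}=L_1^{[N]}\cdots L_p^{[N]}$ this yields \eqref{eq:bidiagonal_HessenbergN}. Finally I would note that the positivity $\alpha_i>0$ is inherited verbatim, and that the entries of $L_k^{[N]},U^{[N]}$ are exactly as claimed by substituting the index ranges $j=0,\dots,N-1$ (for the subdiagonals) and $j=0,\dots,N$ (for $U^{[N]}$) into the parametrization.

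The main obstacle is purely bookkeeping: one must check carefully that no entry with row or column index exceeding $N$ contributes to the $(i,j)$ entry of the product for $i,j\le N$. For the purely lower triangular part this is automatic; for the product with $U$ it requires the one-line computation above, using that $U$ has no subdiagonal entries so that left-truncating $L$ and right-truncating $U$ loses nothing in the top-left block. I would also remark that one could alternatively deduce the statement from the general principle that leading principal submatrices of products factor through leading principal submatrices whenever the left factors are lower triangular (``lower triangular times anything, then truncate, equals truncate-then-truncate''), applied here with the lower triangular block $L_1\cdots L_p$ and the remaining factor $U$ handled by direct inspection.
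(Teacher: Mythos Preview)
Your argument is correct. The paper states this proposition without proof, treating it as immediate from the structure of the factors, so your verification is actually more detailed than what the authors supply. Your key computation---that $(LU)_{i,j}=L_{i,j}U_{j,j}+L_{i,j-1}$ for the lower-triangular $L=L_1\cdots L_p$ and the upper-bidiagonal $U$, and that for $i,j\leqslant N$ this only involves entries of the truncations---is exactly the right way to see why no spillover from indices beyond $N$ occurs; combined with the (automatic) fact that truncation commutes with products of lower-triangular matrices, this settles the identity cleanly.
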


\begin{pro}
If $T$ admits a positive bidiagonal factorization then its truncations 	$T^{[N]} $ are oscillatory.
\end{pro}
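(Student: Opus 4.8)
The plan is to show that each truncated factor $L_k^{[N]}$ and $U^{[N]}$ in \eqref{eq:L1L2U_truncated} is an oscillatory matrix, and then invoke the multiplicative closure of the oscillatory class together with the Gantmacher--Krein criterion (Theorem \ref{teo:Gantmacher--Krein Criterion}). Concretely, a lower bidiagonal matrix with $1$'s on the diagonal and strictly positive entries $\alpha_{k+1+j(p+1)}>0$ on the first subdiagonal is totally nonnegative (all its minors are easily seen to be either $0$ or products of the $\alpha$'s and $1$'s, hence nonnegative), is nonsingular (determinant $=1$), and is irreducible since the full subdiagonal is nonzero; thus by the Gantmacher--Krein criterion $L_k^{[N]}$ is oscillatory. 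Similarly $U^{[N]}$, being upper bidiagonal with strictly positive diagonal $\alpha_{1+j(p+1)}>0$ and $1$'s on the first superdiagonal, is totally nonnegative, nonsingular (determinant $=\prod_j \alpha_{1+j(p+1)}\neq 0$), and irreducible, hence oscillatory.

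Next I would appeal to the fact, recalled in the excerpt right after the definition of oscillatory matrices (via Cauchy--Binet and \cite[Theorem 1.1.2]{Fallat-Johnson}), that the set of oscillatory matrices is closed under ordinary matrix multiplication. Since $T^{[N]}=L_1^{[N]}L_2^{[N]}\cdots L_p^{[N]}U^{[N]}$ by Proposition (the one stating \eqref{eq:bidiagonal_HessenbergN}), and each factor is oscillatory, the product $T^{[N]}$ is oscillatory. One small point worth spelling out is that the factorization \eqref{eq:bidiagonal_HessenbergN} is genuinely inherited from \eqref{eq:bidiagonal}: this is the content of the preceding proposition, so I can simply cite it.

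Alternatively — and perhaps cleaner to state — one can observe that each $L_k^{[N]}$ and $U^{[N]}$ is already in $\operatorname{InTN}$ (nonsingular totally nonnegative), as noted in the Remark following the positive bidiagonal factorization definition, so the product $T^{[N]}$ lies in $\operatorname{InTN}$; then it remains only to check irreducibility, equivalently (by Gantmacher--Krein) that the entries of $T^{[N]}$ on the first subdiagonal and first superdiagonal are strictly positive. The first superdiagonal entries of $T^{[N]}$ are all equal to $1$ by the monic normalization \eqref{eq:Hessenberg_truncation}, so positivity there is automatic. For the first subdiagonal, a direct multiplication of the bidiagonal factors shows $(T^{[N]})_{n+1,n}$ is a sum of products of the positive $\alpha$'s (it includes, for instance, the term $\alpha_{1+n(p+1)+p}+\alpha_{2+n(p+1)+p}+\cdots$ coming from the various ways a single subdiagonal step can be taken among the $p+1$ factors), hence strictly positive.

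The main obstacle is the bookkeeping in this last verification: one must be careful that the index shifts in \eqref{eq:L1L2U_truncated} are consistent so that the relevant $\alpha$'s actually appear in $(T^{[N]})_{n+1,n}$ with positive coefficients and no cancellation occurs — but since all $\alpha_i>0$ and all the combinatorial coefficients arising from multiplying nonnegative bidiagonal matrices are nonnegative, cancellation is impossible and the argument goes through. Everything else is routine once the oscillatory-closure-under-products fact and the Gantmacher--Krein criterion are in hand.
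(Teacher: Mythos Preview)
Your first approach contains a genuine error: the individual bidiagonal factors $L_k^{[N]}$ and $U^{[N]}$ are \emph{not} oscillatory. The Gantmacher--Krein criterion (Theorem~\ref{teo:Gantmacher--Krein Criterion}) requires strictly positive entries on \emph{both} the first subdiagonal and the first superdiagonal. A lower bidiagonal matrix $L_k^{[N]}$ has all zeros on its first superdiagonal, so the criterion fails; equivalently, $L_k^{[N]}$ is not irreducible, since in the associated directed graph one can move from node $i$ only to nodes $i$ and $i-1$, never upward. The same applies to $U^{[N]}$ with the roles of sub- and superdiagonal reversed. So the sentence ``is irreducible since the full subdiagonal is nonzero'' is simply false, and you cannot conclude that each factor is oscillatory, nor invoke closure of oscillatory matrices under products.

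Your alternative approach, however, is correct and is exactly what the paper does: each factor lies in $\operatorname{InTN}$, the product therefore lies in $\operatorname{InTN}$, and then one checks directly that the first super- and subdiagonal of $T^{[N]}$ are strictly positive (the superdiagonal is all $1$'s; the subdiagonal entries are sums of products of positive $\alpha$'s) so that Gantmacher--Krein applies to the \emph{product}. You should discard the first argument entirely and present only this one.
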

\begin{proof}
	As all factors are InTN the product matrix is InTN. Moreover, as all parameters in the bidiagonal factors are positive then using Theorem~\ref{teo:Gantmacher--Krein Criterion} we get that the matrix is oscillatory.
\end{proof}

For the tetradiagonal case, $p=2$, it can be proven that if $T^{[N]} $ is an oscillatory matrix there exists such positive bidiagonal factorizations. If for each $N$ the matrix $T^{[N]} $ is oscillatory there exists a positive bidiagonal factorization for the semi-infinite matrix $T$ except for $ \alpha_2 $ that we can only guarantee only to be nonnegative \cite{bidiagonal_factorization_paper}.

\subsection{Interlacing properties and positivity of Christoffel coefficients}
%Let us assume that $T$ given in \eqref{eq:monic_Hessenberg} is a regular oscillatory matrix, so the matrices $T^{[N]}$ are oscillatory. 

%The matrices $T^{[N]}$ are oscillatory so we conclude that 
%all the matrices $ T^{[N,k]}$, $k\in\{0,1,\dots,N+1\}$ are also oscillatory. See Theorem \ref{teo:contigous}. Then, all the eigenvalues $ \lambda_1^{[N,k]}> \lambda_2^{[N,k]}>\cdots > \lambda_{N+1-k}^{[N,k]}>0$ of $ T^{[N,k]}$ are simple and positive. In fact, as we have seen, these eigenvalues are the zeros of the polynomial $B^{[k]}_{N+1}$. If we take out the first column and first row of the matrix $ T^{[N,k]}$ we get the matrix $ T^{[N,k+1]}$, and if we take out the last column and the last row of the matrix $ T^{[N,k]}$ we get the matrix $ T^{[N-1,k]}$, so again using the Theorem \ref{teo:contigous}
%we get the strict interlacing properties
%\begin{align*}
% \lambda_1^{[N,k]}> \lambda_1^{[N,k+1]}> \lambda_2^{[N,k]}> \lambda_2^{[N,k+1]}>\cdots> \lambda_{N-k}^{[N,k+1]}> \lambda_{N+1-k}^{[N,k]}>0,\\
% \lambda_1^{[N,k]}> \lambda_1^{[N-1,k]}> \lambda_2^{[N,k]}> \lambda_2^{[N-1,k]}>\cdots> \lambda_{N-k}^{[N-1,k]}> \lambda_{N+1-k}^{[N,k]}>0.
%\end{align*}
%That is, $B^{[k]}_{N+1}$ interlaces $B^{[k+1]}_{N+1}$ and $B^{[k]}_{N}$.
%

We now explore some consequences that a positive bidiagonal factorization has. For that aim we introduce the idea of Darboux transformation of a banded Hessenberg matrix. Darboux transformations for banded Hessenberg matrices (beyond the tridiagonal situation) were discussed in \cite{dolores}. In \cite{proximo} for the tetradiagonal case, and corresponding multiple orthogonal polynomials in the step-line with two weights, the PBF factorization is given in terms of the values of the orthogonal polynomials of type I and II at $0$ and, consequently, an spectral interpretation of the Darboux transformation is given.
\begin{defi}[Darboux transformations]
	Let us assume that $T$ admits a bidiagonal factorization (not necessarily positive). For each of its truncations $T^{[N]}$ we consider a chain of new auxiliary matrices, called Darboux transformations, given by the consecutive permutation of the triangular matrices in the factorization~\eqref{eq:bidiagonal_HessenbergN},
\begin{align}\label{eq:Darboux}
\left\{\begin{aligned}
 \hat T^{[N,1]}&=L_2 ^{[N]}\cdots L_p ^{[N]} U^{[N]} L_1^{[N]}, \\
 \hat T^{[N,2]}&=L_3 ^{[N]}\cdots L_p ^{[N]} U^{[N]} L_1^{[N]}L_2 ^{[N]}, \\ &\hspace*{5pt} \vdots\\
 \hat T^{[N,p-1]}&=L_p ^{[N]} U^{[N]} L_1^{[N]}L_2 ^{[N]} \cdots L_{p-1} ^{[N]},\\
 \hat T^{[N,p]}&= U^{[N]} L_1^{[N]}L_2 ^{[N]} \cdots L_{p} ^{[N]}.
\end{aligned}\right.
\end{align}
\end{defi}
%and denote by $\hat B_{N+1}$ its corresponding second kind polynomial. Consequently, 
%The auxiliary matrices are ``Darboux transformations'' of the submatrices of the banded Hessenberg matrix. %, see \S \ref{S:Darboux}. 
\begin{lemma}
These Darboux transformations are banded Hessenberg matrices with only its first $p$ subdiagonals different from zero.
\end{lemma}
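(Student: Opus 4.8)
The plan is to analyze the band structure of each Darboux transformation $\hat T^{[N,k]}$ directly from its defining product in \eqref{eq:Darboux}. The key observation is that each factor $L_j^{[N]}$ is lower bidiagonal (nonzero entries on the main diagonal and the first subdiagonal only) and $U^{[N]}$ is upper bidiagonal (nonzero entries on the main diagonal and the first superdiagonal only). When one multiplies matrices, the bandwidths add: more precisely, if $M_1$ has nonzero entries only in positions $(i,j)$ with $-r_1 \le i-j \le s_1$ and $M_2$ only in positions with $-r_2 \le i-j \le s_2$, then $M_1 M_2$ has nonzero entries only in positions with $-(r_1+r_2) \le i-j \le s_1+s_2$. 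So I would first record this elementary band-addition lemma (it is an immediate consequence of the matrix product formula, since $(M_1M_2)_{i,j} = \sum_l (M_1)_{i,l}(M_2)_{l,j}$ and a nonzero term requires $i-l$ and $l-j$ both in range).

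Next I would apply this to $\hat T^{[N,k]} = L_{k+1}^{[N]}\cdots L_p^{[N]} U^{[N]} L_1^{[N]} \cdots L_k^{[N]}$. There are $p$ factors altogether (the same $p$ bidiagonal factors as in $T^{[N]}$ itself, merely cyclically permuted): namely $p-1$ lower bidiagonal factors ($L_{k+1},\dots,L_p,L_1,\dots,L_k$, that's $p-1$ of them since $U$ takes one slot) — wait, let me recount: the factors are $L_{k+1},\dots,L_p$ ($p-k$ of them), then $U$, then $L_1,\dots,L_k$ ($k$ of them), for a total of $(p-k) + 1 + k = p+1$ factors. Among these, $p$ are lower bidiagonal and one ($U$) is upper bidiagonal. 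By the band-addition lemma, the product has nonzero entries only in positions $(i,j)$ with $i - j \le 1$ (one superdiagonal, from the single $U$) and $i - j \ge -p$ (from the $p$ lower bidiagonal factors). Hence $\hat T^{[N,k]}$ is lower Hessenberg with at most its first $p$ subdiagonals nonzero, which is exactly the claim.

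The only subtlety — and the place I'd expect to need a sentence of care — is making sure the permuted product really is lower Hessenberg, i.e. that no extra superdiagonals appear. This follows because only one of the $p+1$ factors, $U^{[N]}$, contributes a superdiagonal and it contributes only one; all the $L$-factors contribute zero superdiagonals. One should also note that the Darboux matrices are genuinely $(N+1)\times(N+1)$ (the truncation $[N]$ is preserved under reordering of factors because each factor is $(N+1)\times(N+1)$), so "banded Hessenberg matrix" is meaningful. No irreducibility or nonvanishing of the subdiagonals is asserted in this lemma, so nothing further is needed; that refinement, if wanted, would come later from the positivity of the $\alpha_i$ via the Cauchy--Binet / Gantmacher--Krein circle of ideas already invoked in the excerpt. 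I would therefore keep this proof to the two ingredients above: the band-addition lemma and the count of $p$ lower bidiagonal factors plus one upper bidiagonal factor.
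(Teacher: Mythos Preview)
Your argument is correct: the band-addition observation together with the count of $p$ lower bidiagonal factors $L_j^{[N]}$ and one upper bidiagonal factor $U^{[N]}$ immediately gives one superdiagonal and at most $p$ subdiagonals. The paper states this lemma without proof, so there is nothing to compare against; your approach is the natural one and would be an appropriate justification to insert. The mid-paragraph recount where you first say ``$p$ factors altogether'' and then correct to $p+1$ should be cleaned up in a final write-up, but the final count ($p$ factors of type $L$, one of type $U$, hence lower bandwidth $p$ and upper bandwidth $1$) is right.
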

%\begin{rem}\label{rem:hat T is not tilde T}
% These auxiliary matrices $\hat T^{[N]}$ are not the $m$-th leading principal submatrix of the matrix $\hat T\coloneq L_2 \dots L_p UL_1$. The difference is in the last diagonal entry.
% \end{rem}
%The entries of $\hat T$ are
%\begin{align}\label{eq:entries hat_T}
%&\left\{\begin{aligned}
%\hat c_n & = \alpha_{3n+2} + \alpha_{3n+1} + \alpha_{3n} , \\ 
%\hat b_n & = \alpha_{3n} \alpha_{3n-1}+ \alpha_{3n+1} \alpha_{3n-1}+ \alpha_{3n} \alpha_{3n-2} , \\
%\hat T_n & = \alpha_{3n} \alpha_{3n-2} \alpha_{3n-4} ,
%\end{aligned}\right.&
%\end{align}
%All the entries of the $(N+1)$-th leading principal submatrix of $\,\hat T$ coincide with those of $\hat T^{[N]}$ but for the last diagonal entry, as
%$(\hat T^{[N]})_{N+1,N+1}= \alpha_{3N}+ \alpha_{3N+1}$ while $\hat c_{N+1}= \alpha_{3N}+ \alpha_{3N+1}+ \alpha_{3N+2}$.
%\end{rem}

\begin{lemma}\label{lemma:poly}
Let us assume that the positive bidiagonal factorization \eqref{eq:bidiagonal_HessenbergN} holds.
Then, for $k\in\{1,\dots,p\}$, we find
 \begin{enumerate}
 \item The Darboux transformation $\hat{ T}^{[N,k]}$ is oscillatory.
\item The characteristic polynomial of the Darboux transformation 
$\hat{ T}^{[N,k]}$ is $B_{N+1}$. 
\item If $w$ is a left eigenvector of $ T^{[N]}$, then $\hat w=w L_{1}^{[N]}\cdots L_{k}^{[N]}$ is a left eigenvector of $\hat{ T}^{[N,k]}$.
\end{enumerate}
\end{lemma}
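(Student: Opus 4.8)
The plan is to reduce the whole lemma to one structural observation: the cyclic permutation of the bidiagonal factors in \eqref{eq:Darboux} is realised by a similarity transformation. Each $L_j^{[N]}$ is lower bidiagonal with unit diagonal, hence invertible, so setting $P_k\coloneq L_1^{[N]}\cdots L_k^{[N]}$ and using $T^{[N]}=L_1^{[N]}\cdots L_p^{[N]}U^{[N]}$ from \eqref{eq:bidiagonal_HessenbergN} one gets
\[
\hat T^{[N,k]}=L_{k+1}^{[N]}\cdots L_p^{[N]}U^{[N]}L_1^{[N]}\cdots L_k^{[N]}=P_k^{-1}\,T^{[N]}\,P_k .
\]
Item (ii) then follows at once, since similar matrices share the characteristic polynomial and by Proposition~\ref{pro:determintal_second_kind} that of $T^{[N]}$ is $B_{N+1}$. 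Item (iii) is equally short: if $w T^{[N]}=\lambda w$, then $\hat w\coloneq w P_k=wL_1^{[N]}\cdots L_k^{[N]}$ satisfies $\hat w\,\hat T^{[N,k]}=w T^{[N]}P_k=\lambda w P_k=\lambda\hat w$.

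For item (i) I would argue separately, because being similar to an oscillatory matrix via an $\operatorname{InTN}$ conjugator does not by itself force total nonnegativity (inverses of TN matrices need not be TN). Instead I would use that $\hat T^{[N,k]}$ is, literally as written in \eqref{eq:Darboux}, a product of the matrices $L_1^{[N]},\dots,L_p^{[N]},U^{[N]}$ in permuted order; since all the parameters $\alpha_i$ are positive, each of these bidiagonal factors lies in $\operatorname{InTN}$, and $\operatorname{InTN}$ is closed under matrix products (Cauchy--Binet). Hence $\hat T^{[N,k]}\in\operatorname{InTN}$, i.e. it is nonsingular and totally nonnegative, and by the Gantmacher--Krein criterion (Theorem~\ref{teo:Gantmacher--Krein Criterion}) it only remains to check that its first superdiagonal and first subdiagonal entries are strictly positive. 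This is a direct computation from the bidiagonal structure: since the $L_j^{[N]}$ are lower triangular and $U^{[N]}$ is upper bidiagonal, the only source of superdiagonal entries is the superdiagonal of $U^{[N]}$, and passing it through the unit-diagonal lower factors shows that all superdiagonal entries of $\hat T^{[N,k]}$ equal $1$; similarly, each entry of the first subdiagonal is a positive combination of the $\alpha_i$ (a strictly positive term, a product of a subdiagonal $\alpha$ with a diagonal $\alpha$ of $U^{[N]}$, plus further nonnegative terms), hence is positive. Therefore $\hat T^{[N,k]}$ is oscillatory, which is (i).

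The only genuinely delicate point is item (i), and within it the verification of the Gantmacher--Krein positivity conditions for the first sub- and super-diagonals of the permuted product; items (ii) and (iii) are immediate corollaries of the similarity identity $\hat T^{[N,k]}=P_k^{-1}T^{[N]}P_k$. Everything else is bookkeeping with the explicit bidiagonal factors \eqref{eq:L1L2U_truncated}.
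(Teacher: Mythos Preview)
Your proof is correct and follows essentially the same route as the paper: the similarity identity $\hat T^{[N,k]}=P_k^{-1}T^{[N]}P_k$ with $P_k=L_1^{[N]}\cdots L_k^{[N]}$ disposes of (ii) and (iii) immediately, while (i) is handled independently by noting that $\hat T^{[N,k]}$ is a product of $\operatorname{InTN}$ bidiagonal factors and then verifying the Gantmacher--Krein criterion via the positivity of the first sub- and super-diagonals. Your extra remark that similarity through an $\operatorname{InTN}$ conjugator does not by itself transport total nonnegativity is a helpful clarification of why (i) cannot be deduced from the similarity alone.
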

\begin{proof}
\begin{enumerate}
 \item Each bidiagonal factor belongs to InTN. Then, the Darboux transformation $\hat{ T}^{[N,k]}$ is a product of matrices in InTN and, consequently, belongs to InTN. Moreover, the entries in the first superdiagonal are all $1$, while the entries in the first subdiagonal are sum of products of $\alpha$'s. Thus, all entries in these two diagonals are positive. According to Gantmacher--Krein Criterion, Theorem~\ref{teo:Gantmacher--Krein Criterion}, is an oscillatory matrix.
 \item As
${\hat T}^{[N,k]}=( L_{1}^{[N]}\cdots L_{k}^{[N]} )^{-1} T^{[N]} L_{1}^{[N]}\cdots L_{k}^{[N]} $ its characteristic polynomial is $B_{N+1}$. 
%Consequently, from the theory of oscillatory matrices, $B_{N+1}$ also interlaces $\hat B_{N+1}^{[1]}$.

\item
We see that
\begin{align*}
\lambda \hat w=\lambda w L_{1}^{[N]}\cdots L_{k}^{[N]} =w L^{[N]}_{1} \cdots L_{k}^{[N]} L^{[N]}_{k+1} \cdots L_{p}^{[N]} U^{[N]}L_{1}^{[N]}\cdots L_{k}^{[N]} =\hat w \hat{ T}^{[N]}.
\end{align*}
\end{enumerate}
\end{proof}

In order to show the positivity of the Christoffel coefficients we require of some preliminary notation.

\begin{defi}\label{def:L_alpha}
Let us define the matrix
 \begin{align*}
 \mathscr L\coloneq\begin{bNiceMatrix}
 \mathscr L_1 & \mathscr L_2&\Cdots& \mathscr L_p
 \end{bNiceMatrix}\in\R^{p\times p}
 \end{align*}
 with columns
 \begin{align*}
 \mathscr L_1 &\coloneq \begin{bNiceMatrix}
 1\\0\\\Vdots\\0
 \end{bNiceMatrix},& \mathscr L_k&\coloneq\frac{1}{d_k}
 L_1^{[p-1]} \cdots L_{k-1}^{[p-1]} 
 \begin{bNiceMatrix}
 1\\0\\\Vdots\\0
 \end{bNiceMatrix}, & d_k\coloneq %& \alpha_{k+1} \alpha_{k+(p+1)} \alpha_{k-1+2(p+1)} \alpha_{k-2+3(p+1)}\cdots \alpha_{2+(k-1)(p+1)},
 & \alpha_{k} \alpha_{k+p} \alpha_{k+2p} \alpha_{k+3p}\cdots \alpha_{k+(k-2)p},
 & k&\in\{2,\dots, p\}.
 \end{align*}
% and
% \begin{align*}
% d_k\coloneq %& \alpha_{k+1} \alpha_{k+(p+1)} \alpha_{k-1+2(p+1)} \alpha_{k-2+3(p+1)}\cdots \alpha_{2+(k-1)(p+1)},
% & \alpha_{k} \alpha_{k+p} \alpha_{k+2p} \alpha_{k+1+3p}\cdots \alpha_{k+(k-2)p}.
% \end{align*}
\end{defi}

\begin{lemma}
$\mathscr L$ is a non-negative upper unitriangular matrix.
\end{lemma}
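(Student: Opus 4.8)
The plan is to examine the columns of $\mathscr L$ one at a time, using that each truncated factor $L_i^{[p-1]}$ is a $p\times p$ lower bidiagonal matrix with $1$'s on the main diagonal and subdiagonal entries $(L_i^{[p-1]})_{m,m-1}=\alpha_{i+1+(m-1)(p+1)}$, all positive by the constraints $\alpha_j>0$. Non-negativity of $\mathscr L$ is then immediate: $\mathscr L_1$ is the first column of the identity, and for $2\leqslant k\leqslant p$ the column $\mathscr L_k$ is $d_k^{-1}$ times a product of matrices with non-negative entries applied to a non-negative vector, while $d_k>0$. So every entry of $\mathscr L$ is non-negative.

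For the triangular structure I would read the product defining $\mathscr L_k$ from right to left. Put $u_0\coloneq(1,0,\dots,0)^\top\in\R^p$ and $u_m\coloneq L_{k-m}^{[p-1]}u_{m-1}$ for $m=1,\dots,k-1$, so that $u_{k-1}=L_1^{[p-1]}\cdots L_{k-1}^{[p-1]}(1,0,\dots,0)^\top=d_k\,\mathscr L_k$. The elementary step to iterate is that if a vector $w\in\R^p$ has its nonzero entries confined to the zero-based positions $0,\dots,m-1$, then $L_i^{[p-1]}w$ has its nonzero entries confined to $0,\dots,m$, with the entry in position $m$ equal to $(L_i^{[p-1]})_{m,m-1}$ times the entry of $w$ in position $m-1$; this is clear from $(L_i^{[p-1]}w)_r=(L_i^{[p-1]})_{r,r-1}w_{r-1}+w_r$, which vanishes for $r>m$. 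By induction on $m$ the vector $u_m$ is supported on positions $0,\dots,m$, and since $m\leqslant k-1\leqslant p-1$ there is no overflow of the $p\times p$ frame. In particular $\mathscr L_k=d_k^{-1}u_{k-1}$ has nonzero entries only in rows $1,\dots,k$, which makes $\mathscr L$ upper triangular, and $\mathscr L_1=(1,0,\dots,0)^\top$ covers the case $k=1$.

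To identify the diagonal entries I would follow the bottom (``pivot'') entry of $u_m$. It equals $1$ in $u_0$, and at stage $m$ it is multiplied by $(L_{k-m}^{[p-1]})_{m,m-1}=\alpha_{(k-m)+1+(m-1)(p+1)}=\alpha_{k+(m-1)p}$, the middle terms cancelling. Hence the pivot of $u_{k-1}$, which occupies row $k$, is $\alpha_k\alpha_{k+p}\alpha_{k+2p}\cdots\alpha_{k+(k-2)p}=d_k$, so $(\mathscr L_k)_{k}=d_k^{-1}d_k=1$; for $k=1$ the diagonal entry is trivially $1$. Combining the three observations gives that $\mathscr L$ is a non-negative upper unitriangular matrix.

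The only genuine obstacle is index bookkeeping: checking that $(L_{k-m}^{[p-1]})_{m,m-1}$ collapses to $\alpha_{k+(m-1)p}$ and that the product of these factors over $m=1,\dots,k-1$ reproduces exactly the string $d_k$ written down in Definition~\ref{def:L_alpha}. Beyond the bidiagonality of the factors there is no conceptual difficulty.
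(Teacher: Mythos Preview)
Your argument is correct. The paper states this lemma without proof, so there is nothing to compare against; your approach---tracking the support of $u_m=L_{k-m}^{[p-1]}u_{m-1}$ to get the upper-triangular shape, and tracking the pivot entry via $(L_{k-m}^{[p-1]})_{m,m-1}=\alpha_{k+(m-1)p}$ to recover the diagonal value $d_k$---is exactly the natural one, and your index computation $k-m+1+(m-1)(p+1)=k+(m-1)p$ checks out.
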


Then, we can deduce the following important result given conditions that ensure the Christoffel coefficients positivity.

\begin{teo}[Christoffel coefficients positivity]\label{theorem:Christoffel_positivy}
Let us assume that $T^{[N]}$ has a positive bidiagonal factorization \eqref{eq:bidiagonal_HessenbergN} and that the initial conditions in \eqref{eq:mic}, is such that
\begin{align*}
\nu^{-\top} = \mathscr L\mathscr C
\end{align*}
for some nonnegative upper unitriangular matrix $\mathscr C$.
Then, the Christoffel coefficients \eqref{eq:kcomponentelefteigenII} of the discrete measures given in \eqref{eq:discrete_mesures} for $T^{[N]}$ are positive, i.e.,
\begin{align*}
\mu^{[N]}_{n,a}&>0, & n&\in\{1,\dots,N+1\},& a&\in\{1,\dots,p\}.
\end{align*}
\end{teo}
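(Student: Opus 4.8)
The plan is to combine the factored formula \eqref{eq:mu_W_e_nu} for the Christoffel coefficients with the oscillatory structure of the Darboux transforms \eqref{eq:Darboux}. First I would write \eqref{eq:mu_W_e_nu} as $\mathcal M=\mathcal W_p\,\nu^{-\top}$, where $\mathcal M=\big[\mu^{[N]}_{k,a}\big]$ and $\mathcal W_p=\big[w^{\langle N\rangle}_{k,a}\big]$ are the $(N+1)\times p$ matrices with rows indexed by $k\in\{1,\dots,N+1\}$ and columns by $a\in\{1,\dots,p\}$, and substitute the hypothesis $\nu^{-\top}=\mathscr L\mathscr C$ to obtain $\mathcal M=(\mathcal W_p\mathscr L)\,\mathscr C$. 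Since $\mathscr C$ is nonnegative and upper unitriangular, $\mathcal M_{k,a}=(\mathcal W_p\mathscr L)_{k,a}+\sum_{b<a}(\mathcal W_p\mathscr L)_{k,b}\,\mathscr C_{b,a}$, so if every entry of $\mathcal W_p\mathscr L$ is strictly positive then every $\mu^{[N]}_{k,a}$ is strictly positive. The whole statement is thus reduced to the positivity of $\mathcal W_p\mathscr L$.

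The second step identifies the columns of $\mathcal W_p\mathscr L$ with first components of left eigenvectors along the Darboux chain. By Definition~\ref{def:L_alpha} the $j$-th column of $\mathscr L$ is $d_j^{-1}$ times the first column of $L^{[p-1]}_1\cdots L^{[p-1]}_{j-1}$; since each $L_k$ is lower bidiagonal with unit diagonal, this first column is supported in the rows $0,\dots,j-1\le p-1$ and agrees there with the first column of $L^{[N]}_1\cdots L^{[N]}_{j-1}$. Hence $(\mathcal W_p\mathscr L)_{k,j}$ equals $d_j^{-1}$ times the first entry of the row vector $\hat w^{\langle N\rangle}_k\coloneq w^{\langle N\rangle}_k L^{[N]}_1\cdots L^{[N]}_{j-1}$, and $d_j>0$ because it is a product of the positive parameters $\alpha_i$. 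By Lemma~\ref{lemma:poly}(iii) the vector $\hat w^{\langle N\rangle}_k$ is a left eigenvector of $\hat T^{[N,j-1]}$ (with $\hat T^{[N,0]}\coloneq T^{[N]}$), which is oscillatory by Lemma~\ref{lemma:poly}(i); since $\hat T^{[N,j-1]}$ is conjugate to $T^{[N]}$ it has the same spectrum, so $\hat w^{\langle N\rangle}_k$ corresponds to the $k$-th largest eigenvalue $\lambda^{[N]}_k$. It therefore remains to show that, for all $j$ and $k$, the first entry of $\hat w^{\langle N\rangle}_k$ is positive.

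The final step is the sign analysis, which I expect to be the main obstacle. Right multiplication by the lower bidiagonal unit-diagonal matrices $L^{[N]}_1,\dots,L^{[N]}_{j-1}$ does not change the last entry of a row vector, so the last entry of $\hat w^{\langle N\rangle}_k$ equals the last entry of $w^{\langle N\rangle}_k$, which by Proposition~\ref{pro:properties_Q}(ii) and the confluent Christoffel--Darboux identity \eqref{eq:CD2_confluent} equals $Q_{N,N}\!\big(\lambda^{[N]}_k\big)\big/\!\sum_l Q_{l,N}\!\big(\lambda^{[N]}_k\big)B_l\!\big(\lambda^{[N]}_k\big)=1/B'_{N+1}\!\big(\lambda^{[N]}_k\big)$; because $B_{N+1}(x)=\prod_m\big(x-\lambda^{[N]}_m\big)$ with $\lambda^{[N]}_1>\cdots>\lambda^{[N]}_{N+1}>0$ (Theorem~\ref{teo:eigennvalues_TN}), this last entry has sign $(-1)^{k-1}$. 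On the other hand, $\hat T^{[N,j-1]}$ is oscillatory, so by Theorem~\ref{teo:eigenvectorII}(i) its left eigenvector $\hat w^{\langle N\rangle}_k$ for the $k$-th largest eigenvalue has nonzero first and last entries and $v_m=v_M=k-1$ sign changes; consequently $\sgn(\text{first entry})\cdot\sgn(\text{last entry})=(-1)^{k-1}$, which forces the first entry to have sign $(-1)^{k-1}(-1)^{k-1}=+1$. Thus $(\mathcal W_p\mathscr L)_{k,j}>0$ for all $k$ and $j$, and the theorem follows from the reduction of the first step. The delicate point is precisely that this sign bookkeeping must be carried out uniformly in $k$ and consistently along the whole Darboux chain; it works because the relevant normalization is the one induced by the right eigenvectors $B^{\langle N\rangle}\!\big(\lambda^{[N]}_k\big)$, whose leading entry $B_0=1$ stays positive under the unitriangular conjugations defining the Darboux transforms.
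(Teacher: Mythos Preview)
Your proposal is correct and follows essentially the same approach as the paper's own proof: reduce via $\nu^{-\top}=\mathscr L\mathscr C$ to the positivity of the entries of $\mathcal W_p\mathscr L$, identify these entries (up to the positive factors $d_j^{-1}$) with first components of the left eigenvectors $w^{\langle N\rangle}_k L^{[N]}_1\cdots L^{[N]}_{j-1}$ of the oscillatory Darboux transforms $\hat T^{[N,j-1]}$, and then use the Gantmacher--Krein sign-variation property together with the invariance of the last entry under right multiplication by unit lower bidiagonal matrices and the sign of $B'_{N+1}\big(\lambda^{[N]}_k\big)$. The paper carries this out by working through $j=1,2,3$ explicitly before stating the general pattern, while you organize the argument top--down; the content is the same.
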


\begin{proof}
 Let us begin proving that $\mu^{[N]}_{n,1} >0, \, n\in\{1,\dots,N+1\}$.
We consider the left eigenvector 
\begin{align*}
 Q^{\langle N\rangle} \big(\lambda^{[N]}_k\big) =
\begin{bNiceMatrix}
 Q_{0,N}\big(\lambda^{[N]}_k\big) & Q_{1,N} \big(\lambda^{[N]}_k\big)&\Cdots& Q_{N,N}\big(\lambda^{[N]}_k\big)
\end{bNiceMatrix}
\end{align*}
and recall that according to Theorem \ref{teo:eigenvectorII} the last entry of any eigenvector is nonzero, i.e. $ Q_{N,N}\big(\lambda^{[N]}_k\big)\neq 0$, so that 
we can normalize the last entry to $1$ to get the following rescaled left eigenvector of $T^{[N]}$
\begin{align*}
\omega^{\langle N\rangle}_k\coloneq \begin{bNiceMatrix}
 \frac{Q_{0,N}\big(\lambda^{[N]}_k\big)}{Q_{N,N}\big(\lambda^{[N]}_k\big)} &\frac{Q_{1,N} \big(\lambda^{[N]}_k\big)}{Q_{N,N}\big(\lambda^{[N]}_k\big)}& \Cdots & 1 
\end{bNiceMatrix}.
\end{align*}
We observe that this normalization is not the biorthogonal normalization leading to the left eigenvector $w^{\langle N\rangle}_k$.
Observe also that according to Theorem \ref{teo:eigenvectorII} the first entry is not zero $\frac{Q_{0,N}\big(\lambda^{[N]}_k\big)}{Q_{N,N}\big(\lambda^{[N]}_k\big)} \neq 0$.

Now, as the last entry is positive the change sign properties described in Theorem~\ref{teo:eigenvectorII}, leads to
\begin{align*}
 \frac{Q_{0,N}\big(\lambda^{[N]}_1\big)}{Q_{N,N}\big(\lambda^{[N]}_1\big)} &>0, & 
\frac{Q_{0,N}\big(\lambda^{[N]}_2\big)}{Q_{N,N}\big(\lambda^{[N]}_2\big)} &<0,& \frac{Q_{0,N}\big(\lambda^{[N]}_3\big)}{Q_{N,N}\big(\lambda^{[N]}_3\big)} &>0, & 
\end{align*}
and so on, alternating the sign. %Using the interlacing properties of the polynomial $B^{\prime}_{N+1}$ with respect to the polynomial $B_{N+1}$
Recall that as the polynomial $B_{N+1}$ is monic for the derivative $B'_{N+1}$ evaluated at the zeros $\lambda^{[N]}_k$ we have
\begin{align*}
 B_{N+1}'(\lambda^{[N]}_1)&>0, & B_{N+1}'(\lambda^{[N]}_2)&<0, & B_{N+1}'(\lambda^{[N]}_3)&>0,
\end{align*}
and so on, alternating the sign.
Then, it holds that
\begin{align*}
\frac{Q_{0,N}\big(\lambda^{[N]}_k\big)}{Q_{N,N}\big(\lambda^{[N]}_k\big)B^{\prime}_{N+1} \big(\lambda^{[N]}_k \big)} &>0, &k&\in\{1, \dots, N+1\}.
\end{align*}
From this using \eqref{eq:kcomponentelefteigenI} and \eqref{eq:kcomponentelefteigenII}, we get that this expression is equal to
$\mu^{[N]}_{k,1}$, $ k\in\{1, \dots, N+1\}$, and we get the statement for this case.

To discuss the Christoffel coefficient $\mu_{k,2}^{[N]} $, for $k\in\{1, \dots, N+1\}$ we use Lemma \ref{lemma:poly}. Recall that $\hat{T}^{[N,1]}$ is an oscillatory matrix with characteristic polynomial $B_{N+1}$. Then, a left eigenvector for the eigenvalue $\lambda^{[N]}_k $ can be chosen as 
\begin{align*}
\omega^{\langle N\rangle}_k L^{[N]}_1=\begin{bNiceMatrix}
 \frac{Q_{0,N}(\lambda^{[N]}_k)}{Q_{N,N}(\lambda^{[N]}_k\big)} &\frac{Q_{1,N} (\lambda^{[N]}_k)}{Q_{N,N}(\lambda^{[N]}_k)}& \Cdots & 1 
\end{bNiceMatrix}L_1^{[N]} 
= \begin{bNiceMatrix}
 \frac{(Q_{0,N}+ \alpha_2 Q_{1,N})(\lambda^{[N]}_k)}{Q_{N,N}(\lambda^{[N]}_k)} & \Cdots & 1 
\end{bNiceMatrix}.
\end{align*}
Again using the sign properties of the left eigenvector associated to an oscillatory matrix we get 
\begin{align*}
\frac{(Q_{0,N}+ \alpha_2 Q_{1,N})(\lambda^{[N]}_1)}{Q_{N,N}(\lambda^{[N]}_1)}&>0, &
\frac{(Q_{0,N}+ \alpha_2 Q_{1,N})(\lambda^{[N]}_2)}{Q_{N,N}(\lambda^{[N]}_2)}&<0, &\frac{(Q_{0,N}+ \alpha_2 Q_{1,N})(\lambda^{[N]}_3)}{Q_{N,N}(\lambda^{[N]}_3)}&>0, 
\end{align*}
and so on, alternating the sign. As before, using the interlacing properties of the polynomial $B^{\prime}_{N+1}$ with respect to the polynomial $B_{N+1}$, and after normalization it holds that
\begin{align*}
\tilde \mu_{k,2}^{[N]}\coloneq \frac{(\frac{1}{ \alpha_2}Q_{0,N}+Q_{1,N})(\lambda^{[N]}_k)}{Q_{N,N}(\lambda^{[N]}_k)B^{\prime}_{N+1} \big(\lambda^{[N]}_k \big)} &>0, & k&\in\{1, \dots, N+1\}.
\end{align*}

%This last quantity is not $\mu_{k,2}^{[N]} $, so we will denote it by $$. Using that
%\begin{align*}
%\mu_{k,2}^{[N]} = - \nu_{1,1} \frac{Q_{0,N}\big(\lambda^{[N]}_k\big)}{B^{\prime}_{N+1} \big(\lambda^{[N]}_k \big)Q_{N,N}\big(\lambda^{[N]}_k\big)} + \frac{Q_{1,N} \big(\lambda^{[N]}_k\big)} {B^{\prime}_{N+1} \big(\lambda^{[N]}_k \big)Q_{N,N}\big(\lambda^{[N]}_k\big)}
%\end{align*}
%it holds that choosing $- \nu_{1,1} = \frac{1}{ \alpha_2}$, we get that
%$$
%\mu_{k,2}^{[N]} = \frac{1}{ \alpha_2}\tilde \mu_{k,2}^{[N]} >0, \, k=1, \dots, N+1.
%$$

Let us discuss on $\mu_{k,3}^{[N]}$, for $k\in\{1, \dots, N+1\}$. Again we use Lemma \ref{lemma:poly}, with $\hat{T}^{[N,2]}$ an oscillatory matrix with characteristic polynomial $B_{N+1}$. Then, a corresponding left eigenvector for the eigenvalue $\lambda^{[N]}_k $ can be chosen as 
\begin{align*}
 \omega^{\langle N\rangle}_k L^{[N]}_1L^{[N]}_2=\begin{bNiceMatrix}
 \frac{Q_{0,N}\big(\lambda^{[N]}_k\big)}{Q_{N,N}\big(\lambda^{[N]}_k\big)} &\frac{Q_{1,N} \big(\lambda^{[N]}_k\big)}{Q_{N,N}\big(\lambda^{[N]}_k\big)}& \Cdots & 1 
 \end{bNiceMatrix}L_1^{[N]} L_2^{[N]} 
 = \begin{bNiceMatrix}
 \frac{(Q_{0,N}+( \alpha_2 + \alpha_3)Q_{1,N}+ \alpha_3 \alpha_{3+p}Q_{2,N})(\lambda^{[N]}_k)}{Q_{N,N}\big(\lambda^{[N]}_k\big)} & \Cdots & 1 
 \end{bNiceMatrix}.
\end{align*}
Again using the sign properties of the left eigenvector associated to an oscillatory matrix we get 
\begin{align*}
 \frac{(Q_{0,N}+( \alpha_2 + \alpha_3)Q_{1,N}+ \alpha_3 \alpha_{3+p}Q_{2,N})(\lambda^{[N]}_1)}{Q_{N,N}\big(\lambda^{[N]}_1\big)} &>0, &
\frac{(Q_{0,N}+( \alpha_2 + \alpha_3)Q_{1,N}+ \alpha_3 \alpha_{3+p}Q_{2,N})(\lambda^{[N]}_2)}{Q_{N,N}\big(\lambda^{[N]}_2\big)} &<0, %&\frac{(Q_{0,N}+( \alpha_2 + \alpha_3)Q_{1,N}+ \alpha_3 \alpha_{3+p}Q_{2,N})(\lambda^{[N]}_3)}{Q_{N,N}\big(\lambda^{[N]}_3\big)} &>0, 
\end{align*}
and so on, alternating the sign. As above we get
\begin{align*}
\tilde \mu^{[N]}_{k,3}\coloneq \frac{\big(\frac{1}{ \alpha_3 \alpha_{3+p}}Q_{0,N}+\frac{ \alpha_2 + \alpha_3}{ \alpha_3 \alpha_{3+p}}Q_{1,N}+Q_{2,N}\big)(\lambda^{[N]}_k)}{
 Q_{N,N}\big(\lambda^{[N]}_k\big)B^{\prime}_{N+1} \big(\lambda^{[N]}_k \big)}&>0, & k&\in\{1, \dots, N+1\}.
\end{align*}
%This last quantity is not $\mu_{k,2}^{[N]} $, so we will denote it by $\tilde \mu_{k,2}^{[N]}$. Using that
%\begin{align*}
% \mu_{k,2}^{[N]} = - \nu_{1,1} \frac{Q_{0,N}\big(\lambda^{[N]}_k\big)}{B^{\prime}_{N+1} \big(\lambda^{[N]}_k \big)Q_{N,N}\big(\lambda^{[N]}_k\big)} + \frac{Q_{1,N} \big(\lambda^{[N]}_k\big)} {B^{\prime}_{N+1} \big(\lambda^{[N]}_k \big)Q_{N,N}\big(\lambda^{[N]}_k\big)}
%\end{align*}
%Using now inductively this idea, we are generating the numbers
%$$
%\tilde \mu_{k,1}^{[N]}, \tilde\mu_{k,2}^{[N]}, \dots, \tilde \mu_{k,p}^{[N]}
%$$
%that are strictly positive, given by

We see that we have gotten quantities $\mu_{k,1}^{[N]},\tilde \mu_{k,2}^{[N]},\tilde \mu_{k,3}^{[N]}$ which are linear combinations of the entries of $w^{\langle N\rangle}_k$ and positive. Proceeding recursively we can get positive numbers $\tilde \mu_{k,a}^{[N]}>0$, for $a\in\{1, \dots, p\}$. We can see that the matrices providing the adequate linear combinations are the matrices $\mathscr L_j$ given in Definition \ref{def:L_alpha}.
These positive numbers can be proven to be
\begin{align*}
 \tilde \mu_{k,1}^{[N]} &\coloneq
 \begin{bNiceMatrix}
 w^{\langle N\rangle}_{k,1} &
 \Cdots &
 w^{\langle N\rangle}_{k,p} 
 \end{bNiceMatrix}
 \begin{bNiceMatrix}
 1\\0\\\Vdots\\0
 \end{bNiceMatrix},&
\tilde \mu_{k,j}^{[N]} &\coloneq 
\begin{bNiceMatrix}
 w^{\langle N\rangle}_{k,1} &
\Cdots &
w^{\langle N\rangle}_{k,p} 
\end{bNiceMatrix}
 \mathscr L_j,
 & j&\in\{2,\dots, p\}.
\end{align*}
Therefore,
$ \begin{bNiceMatrix}[small]
 \tilde \mu^{[N]}_{k,1} &
 \tilde \mu^{[N]}_{k,2} &
 \Cdots
 &
 \tilde \mu^{[N]}_{k,p}
 \end{bNiceMatrix}=\begin{bNiceMatrix}[small]
 w^{\langle N\rangle}_{k,1} &
 \Cdots &
 w^{\langle N\rangle}_{k,p} 
\end{bNiceMatrix}\mathscr L
$
is a entrywise positive row vector. 

We connect now this set of positive numbers $\big\{ \tilde \mu^{[N]}_{k,a}\big\}_{a=1}^p$ with our set of Christoffel coefficients $ \big\{ \mu^{[N]}_{k,a}\big\}_{a=1}^p$.
For that aim, let us write Equation \eqref{eq:kcomponentelefteigenII} in the alternative form
 \begin{align*}
 \begin{bNiceMatrix}
 \mu^{[N]}_{k,1} &
 \mu^{[N]}_{k,2} &
 \Cdots
 &
 \mu^{[N]}_{k,p}
 \end{bNiceMatrix}
 &= \begin{bNiceMatrix}
 w^{\langle N\rangle}_{k,1} &
 w^{\langle N\rangle}_{k,2} &
 \Cdots &
 w^{\langle N\rangle}_{k,p}
 \end{bNiceMatrix} \nu^{-\top} ,
\end{align*} 
and let us assume that $\mathscr C$ is a nonnegative upper unitriangular matrix. Then, we choose the initial conditions such that
$\nu^{-\top}=\mathscr L \mathscr C$.
Thus,
 \begin{align*}
 \begin{bNiceMatrix}
 \mu^{[N]}_{k,1} &
 \mu^{[N]}_{k,2} &
 \Cdots
 &
 \mu^{[N]}_{k,p}
 \end{bNiceMatrix}&= \begin{bNiceMatrix}
 w^{\langle N\rangle}_{k,1} &
 w^{\langle N\rangle}_{k,2} &
 \Cdots &
 w^{\langle N\rangle}_{k,p}
\end{bNiceMatrix} \mathscr L\mathscr C= \begin{bNiceMatrix}
 \tilde \mu^{[N]}_{k,1} &
 \tilde \mu^{[N]}_{k,2} &
 \Cdots
 &
 \tilde \mu^{[N]}_{k,p}
 \end{bNiceMatrix}\mathscr C
\end{align*} 
is an entrywise positive row vector.
%To finish the proof let us notice that we can take a diagonal matrix $D$ such that
%we get an upper triangular matrix, with entries equal $1$ in the main diagonal.
%\begin{align*}
% \left[ e_1^T, L_1^{[N]} e_1^T,L_1^{[N]}L_2^{[N]} e_1^T \dots, L_1^{[N]} \dots L_{p-1}^{[N]} e_1^T \right] D 
%\end{align*}
%So, now we choose the initial values, such that
%$$
%\nu^{-T} = \left[ e_1^T, L_1^{[N]} e_1^T,L_1^{[N]}L_2^{[N]} e_1^T \dots, L_1^{[N]} \dots L_{p-1}^{[N]} e_1^T \right] D
%$$
%and it is proven, noticing that the elements of the matrix $D$ are strictly positive.
 \end{proof}

\subsection{Truncated polynomials and recursion polynomials of the second kind}

\begin{defi}\label{def: k-truncated}
We consider
\begin{enumerate}
 \item
The submatrices 
 \begin{align}\label{eq:Hessenberg_k_truncation}
 T^{[N,k]}\coloneq
\begin{bNiceMatrix}[columns-width = auto]
 T_{k,k} & 1& 0 & \Cdots& &&&0\\
 T_{k+1,k} & T_{k+1,k+1} & 1&\Ddots& &&&\Vdots\\
 \Vdots & & \Ddots& \Ddots&&&&\\
 T_{p,k}& & & & &&&\\[7pt]
 0&\Ddots&&&&&\\
 \Vdots &\Ddots & &&&&&0\\
 \Vdots & & &&&&&1 \\
 0 & \Cdots&&0&T_{N,N-p}& \Cdots & &T_{N,N} 
 %\CodeAfter\line{4-1}{8-5} \line{5-1}{8-4} 
\end{bNiceMatrix}.
 \end{align}
\item The \emph{truncated} polynomials
 \begin{align}\label{eq:det_B-[k]}
 B^{[k]}_{N+1}(x)&\coloneq\det\big(x I_{N+1}- T^{[N,k]}\big).
% =\begin{vNiceMatrix}%[columns-width = 5pt]
% x- T_{k,k} & -1& 0 & \Cdots& &&&0\\
% -T_{k+1,k} & x-T_{k+1,k+1} & -1&\Ddots& &&&\Vdots\\
% \Vdots & & \Ddots& \Ddots&&&&\\
% -T_{p,k}& & & & &&&\\[3pt]
% 0&&&&&&\\
% \Vdots & & &&&&&0\\
% \Vdots & & &&&&&-1 \\
% 0 & \Cdots&&0&-T_{N,N-p}& \Cdots & &x-T_{N,N} 
%% \CodeAfter\line{4-1}{8-5} \line{5-1}{8-4} 
% \end{vNiceMatrix}.
\end{align}
\end{enumerate}
\end{defi}

Notice that $B_{N+1}=B^{[0]}_{N+1}$. For convenience, we also introduce
$B^{[N+1]}_{N+1}\coloneq 1$ and $B^{[N+k]}_{N+1}\coloneq 0$, $k\in\{2,\dots,p\}$,
with $\deg B^{[k]}_{N+1}=N+1-k$.
Expanding this determinant along its first column we find:

\begin{lemma}[Dual recursion for truncated polynomials]
For the truncated polynomials the following $(p+2)$ terms linear homogeneous recurrence holds
\begin{align}\label{eq:recurrence_bitruncation}
T_{p,k}B^{[k+p+1]}_{N+1}+\cdots+ T_{k+1,k}B^{[k+2]}_{N+1}+T_{k,k}B^{[k+1]}_{N+1}+ B^{[k]}_{N+1} &= x B^{[k+1]}_{N+1}, &k &\in\{0,1,\dots,N\}.
\end{align}
\end{lemma}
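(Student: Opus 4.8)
The plan is to compute $B^{[k]}_{N+1}(x)=\det\big(xI-T^{[N,k]}\big)$ by Laplace expansion along its first column, and to recognize each minor that arises as a truncated polynomial of the same family. In $xI-T^{[N,k]}$ the first column reads, from top to bottom, $x-T_{k,k}$, then $-T_{k+1,k},-T_{k+2,k},\dots,-T_{k+p,k}$, and then zeros; these are, up to signs, exactly the coefficients occurring in \eqref{eq:recurrence_bitruncation}, so the expansion should reproduce the asserted recurrence once the minors are evaluated.

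First I would dispose of the top entry: deleting the first row and the first column of $xI-T^{[N,k]}$ leaves precisely $xI-T^{[N,k+1]}$, so this term contributes $(x-T_{k,k})\,B^{[k+1]}_{N+1}$. Then, for each $j\in\{1,\dots,p\}$, I would evaluate the minor $M_{j+1,1}$ obtained by deleting row $j+1$ (the row carrying $T_{k+j,k}$) and column $1$. The key structural fact — which I regard as the only step that is not pure bookkeeping — is that, because $T$ is lower Hessenberg with a single nonzero superdiagonal, this minor is \emph{block lower triangular} with respect to the splitting of its index set into its first $j$ and its remaining entries: in the first $j$ rows all entries lying in columns $>j$ vanish (they correspond to entries of $xI-T$ sitting two or more places above the superdiagonal), the top-left $j\times j$ block is itself lower triangular with every diagonal entry equal to $-1$ (these diagonal entries are superdiagonal entries of $xI-T$), and the complementary diagonal block is exactly $xI-T^{[N,k+j+1]}$. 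Therefore $M_{j+1,1}=(-1)^{j}\,B^{[k+j+1]}_{N+1}$, and multiplying by the matrix entry $-T_{k+j,k}$ and by the cofactor sign $(-1)^{(j+1)+1}=(-1)^{j}$ produces the net contribution $-T_{k+j,k}\,B^{[k+j+1]}_{N+1}$. Adding the top term and these $p$ contributions gives $B^{[k]}_{N+1}=(x-T_{k,k})B^{[k+1]}_{N+1}-\sum_{j=1}^{p}T_{k+j,k}B^{[k+j+1]}_{N+1}$, which is \eqref{eq:recurrence_bitruncation} after moving everything but $xB^{[k+1]}_{N+1}$ to the left-hand side.

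Finally I would verify the boundary regime $k+p>N$, where the finite matrix $T^{[N,k]}$ carries fewer than $p+1$ nonzero entries in its first column. There the minors whose indices would formally exceed $N+1$ degenerate either to the determinant of an empty block or to $xI-T^{[N,N+1]}$, and the conventions $B^{[N+1]}_{N+1}=1$ and $B^{[N+k]}_{N+1}=0$ for $k\in\{2,\dots,p\}$ recorded in Definition~\ref{def: k-truncated} make the recurrence hold verbatim; for $k=N$ it reduces to the trivial identity $T_{N,N}\cdot 1+(x-T_{N,N})=x$. Thus the whole argument hinges on the block-triangular identification of $M_{j+1,1}$; once that is in place only sign bookkeeping and the stated conventions remain.
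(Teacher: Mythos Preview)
Your proof is correct and follows exactly the approach the paper indicates: the paper's entire argument is the single clause ``Expanding this determinant along its first column we find'' placed immediately before the lemma statement. You have supplied the details the paper omits --- the block lower-triangular structure of the minors $M_{j+1,1}$, the sign bookkeeping, and the boundary checks using the conventions $B^{[N+1]}_{N+1}=1$, $B^{[N+k]}_{N+1}=0$ --- but the method is identical.
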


% Let us introduce the notation
%
%\begin{align*}
% \Omega^{\langle N\rangle}\coloneq \begin{bNiceMatrix}
% B^{[1]}_{N+1}& \Cdots & B^{[N+1]}_{N+1}
% \end{bNiceMatrix}.
%\end{align*}

\begin{pro}
The vectors
 \begin{align}\label{eq:left_eigenvectors}
 \omega^{\langle N\rangle}_{n}&\coloneq 
\begin{bNiceMatrix}
 B^{[1]}_{N+1}& \Cdots & B^{[N+1]}_{N+1}
\end{bNiceMatrix}\bigg|_{x=\lambda^{[N]}_{n}}, &n&\in\{1,\dots,N+1\},
 \end{align}
are the left eigenvectors of $T^{[N]}$ with last entry normalized to $1$. 
\end{pro}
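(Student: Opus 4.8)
The plan is to prove, for every $x$, the identity
\begin{align*}
 \omega^{\langle N\rangle}(x)\,T^{[N]} + \begin{bNiceMatrix} B_{N+1}(x) & 0 & \Cdots & 0\end{bNiceMatrix} = x\,\omega^{\langle N\rangle}(x),
 \qquad
 \omega^{\langle N\rangle}(x)\coloneq\begin{bNiceMatrix} B^{[1]}_{N+1}(x) & \Cdots & B^{[N+1]}_{N+1}(x)\end{bNiceMatrix},
\end{align*}
which is the left counterpart of the identity \eqref{eq:truncated_Hessenberg} used for the right eigenvectors. Granting it, evaluation at $x=\lambda^{[N]}_n$ kills the rank-one correction term, because $B_{N+1}=B^{[0]}_{N+1}$ vanishes at its zeros, and yields $\omega^{\langle N\rangle}_n T^{[N]}=\lambda^{[N]}_n\,\omega^{\langle N\rangle}_n$; the vector is nonzero since its last entry equals $B^{[N+1]}_{N+1}=1$ by the convention fixed above, so it is a left eigenvector in the asserted normalization.

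To establish the identity I read off the $k$-th entry, $k\in\{0,\dots,N\}$, of the row vector $\omega^{\langle N\rangle}(x)\,T^{[N]}$ from the banded Hessenberg shape of $T^{[N]}$: in its $k$-th column the only possibly nonzero entries are the superdiagonal $1=T_{k-1,k}$ (present only for $k\geqslant 1$), the diagonal $T_{k,k}$, and the subdiagonals $T_{k+1,k},\dots,T_{\min(k+p,N),k}$. Hence for $k\geqslant 1$ this entry is
\begin{align*}
 B^{[k]}_{N+1}+T_{k,k}B^{[k+1]}_{N+1}+T_{k+1,k}B^{[k+2]}_{N+1}+\cdots+T_{\min(k+p,N),k}B^{[\min(k+p,N)+1]}_{N+1},
\end{align*}
which, using the conventions $B^{[N+1]}_{N+1}=1$ and $B^{[N+j]}_{N+1}=0$ for $j\geqslant 2$, is exactly the left-hand side of the dual recursion \eqref{eq:recurrence_bitruncation} for that $k$: for $k+p\leqslant N$ the two expressions agree termwise, and for $k+p>N$ the surplus terms of \eqref{eq:recurrence_bitruncation} have either a vanishing polynomial factor or a coefficient $T_{i,k}$ with $i>N$, i.e. a row absent from $T^{[N]}$. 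Thus \eqref{eq:recurrence_bitruncation} identifies this entry with $x\,B^{[k+1]}_{N+1}$, the $k$-th component of $x\,\omega^{\langle N\rangle}(x)$.

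Finally, for the column $k=0$ the same count applies except there is no superdiagonal entry above the first row, so the extra term $B^{[0]}_{N+1}=B_{N+1}$ appearing in \eqref{eq:recurrence_bitruncation} for $k=0$ is not reproduced by $\omega^{\langle N\rangle}(x)\,T^{[N]}$; this single discrepancy is precisely what the correction row $\begin{bNiceMatrix}B_{N+1}(x) & 0 & \Cdots & 0\end{bNiceMatrix}$ absorbs, and the identity follows. The only delicate point is the bookkeeping at the two corners of $T^{[N]}$ — checking that the truncation of the lowest subdiagonals is consistent with the extended conventions for the $B^{[m]}_{N+1}$ (for example, column $k=N$ reduces to the identity $B^{[N]}_{N+1}=x-T_{N,N}$, which is true since $T^{[N,N]}$ is the $1\times1$ matrix $T_{N,N}$), and that the absence of a superdiagonal entry in column $0$ is exactly what produces the $B_{N+1}$ correction — but this is routine index-chasing and poses no real difficulty.
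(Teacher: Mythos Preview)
Your proof is correct and follows essentially the same route as the paper: you derive the row identity
\[
\begin{bNiceMatrix} B^{[1]}_{N+1} & \Cdots & B^{[N+1]}_{N+1}\end{bNiceMatrix} T^{[N]} + \begin{bNiceMatrix} B^{[0]}_{N+1} & 0 & \Cdots & 0\end{bNiceMatrix} = x\begin{bNiceMatrix} B^{[1]}_{N+1} & \Cdots & B^{[N+1]}_{N+1}\end{bNiceMatrix}
\]
from the dual recursion \eqref{eq:recurrence_bitruncation} and then evaluate at $x=\lambda^{[N]}_n$. The paper states this in one line; you have simply unpacked the entrywise verification and the corner bookkeeping, which is exactly the content the paper leaves implicit.
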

\begin{proof}
Equation \eqref{eq:recurrence_bitruncation} implies
 \begin{align*}
 \begin{bNiceMatrix}
 B^{[1]}_{N+1}& \Cdots & B^{[N+1]}_{N+1}
 \end{bNiceMatrix}T^{[N]}+ \begin{bNiceMatrix}
 B^{[0]}_{N+1} & 0&\Cdots & 0
 \end{bNiceMatrix}=x \begin{bNiceMatrix}
 B^{[1]}_{N+1} & \Cdots & B^{[N+1]}_{N+1}
\end{bNiceMatrix},
 \end{align*}
and the result follows.
\end{proof}

As byproduct we get:

\begin{pro}[Christoffel--Darboux type formulas for truncated polynomials]\label{pro:CD2}
 \begin{enumerate}
 \item For the truncated polynomials the following Christoffel--Darboux type relation holds
 \begin{align}\label{eq:CD1}
 \sum_{n=0}^{N}B_{N+1}^{[n+1]}(x)B_{n}(y)=\frac{B_{N+1}(x)-B_{N+1}(y) }{x-y}.
 \end{align}
 \item The following confluent Christoffel--Darboux type formula is satisfied
 \begin{align}\label{eq:CD1_confluent}
 \sum_{n=0}^{N}B_{N+1}^{[n+1]}B_{n}=B'_{N+1}.
 \end{align}
 \end{enumerate}
\end{pro}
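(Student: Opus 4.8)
The plan is to mimic the standard telescoping argument that produced the Christoffel–Darboux formula \eqref{eq:CD2}, but now using the left eigenvector relation \eqref{eq:left_eigenvectors} for the truncated polynomials instead of the dual relation \eqref{eq:dual}. First I would record the two matrix identities in play: the right eigenvector relation \eqref{eq:truncated_Hessenberg},
\[
 T^{[N]}B^{\langle N\rangle}(y)+e_{N+1}B_{N+1}(y)=y\,B^{\langle N\rangle}(y),
\]
and the relation behind \eqref{eq:left_eigenvectors},
\[
 \widetilde B(x)\,T^{[N]}+B_{N+1}(x)\,e_1^\top=x\,\widetilde B(x),\qquad \widetilde B(x)\coloneq\begin{bNiceMatrix} B^{[1]}_{N+1}(x)&\Cdots& B^{[N+1]}_{N+1}(x)\end{bNiceMatrix},
\]
which is exactly the content of equation \eqref{eq:recurrence_bitruncation} read as a vector identity. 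Here $e_1,e_{N+1}$ are the first and last vectors of the standard basis of $\R^{N+1}$.

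Next I would form the scalar $\widetilde B(x)\,T^{[N]}\,B^{\langle N\rangle}(y)$ and evaluate it two ways: multiplying the first identity on the left by $\widetilde B(x)$ gives $y\,\widetilde B(x)B^{\langle N\rangle}(y)-\widetilde B(x)e_{N+1}B_{N+1}(y)$, while multiplying the second on the right by $B^{\langle N\rangle}(y)$ gives $x\,\widetilde B(x)B^{\langle N\rangle}(y)-B_{N+1}(x)e_1^\top B^{\langle N\rangle}(y)$. Equating the two and noting that $e_1^\top B^{\langle N\rangle}(y)=B_0(y)=1$ while $\widetilde B(x)e_{N+1}=B^{[N+1]}_{N+1}(x)=1$ by the convention introduced just before the lemma, one obtains
\[
 (x-y)\sum_{n=0}^{N}B^{[n+1]}_{N+1}(x)B_n(y)=B_{N+1}(x)-B_{N+1}(y),
\]
which is \eqref{eq:CD1}. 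The confluent formula \eqref{eq:CD1_confluent} then follows by letting $y\to x$, using that $B_{N+1}$ is a polynomial so the difference quotient on the right tends to $B'_{N+1}(x)$.

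The only point that needs a little care — and the place I would expect the argument to be slightly delicate — is the bookkeeping of the boundary terms: one must check that the extra rank-one pieces $e_{N+1}B_{N+1}(y)$ and $B_{N+1}(x)e_1^\top$ really do collapse to the single terms $-B_{N+1}(y)$ and $-B_{N+1}(x)$ after contracting with $\widetilde B(x)$ and $B^{\langle N\rangle}(y)$ respectively, and that the index ranges match up (the sum runs $n=0,\dots,N$, pairing $B^{[n+1]}_{N+1}$ with $B_n$). Both are immediate from $B_0=1$ and $B^{[N+1]}_{N+1}=1$, so no genuine obstacle remains; this is the truncated-polynomial analogue of the computation already carried out for \eqref{eq:CD2}, with the roles of the type I/II systems replaced by the right eigenvector $B^{\langle N\rangle}$ and the left eigenvector built from truncated characteristic polynomials.
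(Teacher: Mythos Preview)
Your proposal is correct and follows essentially the same approach as the paper: the paper writes down the two identities \eqref{eq:truncated_Hessenberg} and the vector form of \eqref{eq:recurrence_bitruncation}, contracts each against the other vector, subtracts to cancel the $T^{[N]}$ terms, and reads off the boundary contributions $B^{[0]}_{N+1}(x)B_0(y)-B^{[N+1]}_{N+1}(x)B_{N+1}(y)=B_{N+1}(x)-B_{N+1}(y)$, then passes to the limit $x\to y$ for the confluent version. Your framing via computing $\widetilde B(x)\,T^{[N]}\,B^{\langle N\rangle}(y)$ two ways is just a repackaging of the same subtraction, and the boundary bookkeeping you flag is exactly what the paper uses.
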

\begin{proof}
 \begin{enumerate}
 \item
It follows from \eqref{eq:truncated_Hessenberg} and \eqref{eq:left_eigenvectors} that
 \begin{align*}
 \begin{bNiceMatrix}
 B^{[1]}_{N+1}(x)& \Cdots & B^{[N+1]}_{N+1}(x)
 \end{bNiceMatrix}T^{[N]}+ \begin{bNiceMatrix}
 B^{[0]}_{N+1} (x)& 0&\Cdots & 0
 \end{bNiceMatrix}&=x \begin{bNiceMatrix}
 B^{[1]}_{N+1}(x) & \Cdots & B^{[N+1]}_{N+1}(x)
 \end{bNiceMatrix},\\
 T^{[N]}\begin{bNiceMatrix} B_0(y)\\\Vdots\\
 B_N(y)
 \end{bNiceMatrix}+\begin{bNiceMatrix}
 0\\\Vdots\\0\\B_{N+1} (y)
 \end{bNiceMatrix}&=y \begin{bNiceMatrix} B_0(y)\\\Vdots\\
 B_N(y)
 \end{bNiceMatrix},
 \end{align*}
 so that
 \begin{multline*}
 \begin{bNiceMatrix}
 B^{[0]}_{N+1} (x)& 0&\Cdots & 0
 \end{bNiceMatrix}\begin{bNiceMatrix} B_0(y)\\\Vdots\\
 B_N(y)
 \end{bNiceMatrix}- \begin{bNiceMatrix}
 B^{[1]}_{N+1}(x)& \Cdots & B^{[N+1]}_{N+1}(x)
 \end{bNiceMatrix}\begin{bNiceMatrix}
 0\\\Vdots\\0\\B_{N+1} (y)
 \end{bNiceMatrix}\\=(x-y ) \begin{bNiceMatrix}
 B^{[1]}_{N+1}(x)& \Cdots & B^{[N+1]}_{N+1}(x)
 \end{bNiceMatrix}\begin{bNiceMatrix} B_0(y)\\\Vdots\\
 B_N(y)
 \end{bNiceMatrix},
 \end{multline*}
 and \eqref{eq:CD1} follows immediately.
 \item To get \eqref{eq:CD1_confluent} take the limit $x\to y$ in \eqref{eq:CD1}.
 \end{enumerate}
\end{proof}

\begin{pro}
 For $k\in\{0,1,\dots,N\}$, the following holds
 \begin{align*}
 \frac{Q_{n,N}\big(\lambda^{[N]}_k\big)}{Q_{N,N}\big(\lambda^{[N]}_k\big)} &=B^{[n+1]}_{N+1}\big(\lambda^{[N]}_k\big),&
 w^{\langle N\rangle}_{k,n}&=\frac{ B_{N+1}^{[n]}\big(\lambda^{[N]}_k\big)
 }{B'_{N+1}\big(\lambda^{[N]}_k\big)}.
 \end{align*}
\end{pro}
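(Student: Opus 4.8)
The plan is to identify the two claimed identities with results already established for the polynomials $Q_{n,N}$ and the truncated polynomials $B^{[k]}_{N+1}$. I expect both identities to follow from uniqueness of eigenvectors of the oscillatory matrix $T^{[N]}$ together with the determinantal/recursive bookkeeping assembled earlier; the only mild obstacle is keeping track of the two different normalizations of the left eigenvector in play.

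First I would treat the first identity. From Proposition~\ref{pro:properties_Q} we know $Q_N T = x Q_N$ together with $Q_{N+1,N}=\cdots=Q_{N+p-1,N}=0$, so the truncated row vector $Q^{\langle N\rangle}$ satisfies~\eqref{eq:dual}. Evaluating at a zero $\lambda^{[N]}_k$ of $B_{N+1}$ — where, by Proposition~\ref{pro:properties_Q}(ii), the extra term $T_{N+p,N}Q_{N+p,N}=(-1)^{p-1}h_{N+1}^{-1}T_{N+p,N}B_{N+1}$ vanishes — shows $Q^{\langle N\rangle}(\lambda^{[N]}_k)$ is a left eigenvector of $T^{[N]}$. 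On the other hand, by the Proposition preceding this statement, $\omega^{\langle N\rangle}_n = \big(B^{[1]}_{N+1},\dots,B^{[N+1]}_{N+1}\big)\big|_{x=\lambda^{[N]}_n}$ is the left eigenvector with last entry normalized to $1$. Since the eigenvalue $\lambda^{[N]}_k$ is simple, these two eigenvectors are proportional, and the proportionality constant is fixed by comparing last entries: $Q_{N,N}(\lambda^{[N]}_k)$ versus $B^{[N+1]}_{N+1}=1$. Dividing $Q^{\langle N\rangle}(\lambda^{[N]}_k)$ by $Q_{N,N}(\lambda^{[N]}_k)$ therefore gives exactly $\omega^{\langle N\rangle}_k$, i.e. $Q_{n,N}(\lambda^{[N]}_k)/Q_{N,N}(\lambda^{[N]}_k)=B^{[n+1]}_{N+1}(\lambda^{[N]}_k)$ for all $n\in\{0,\dots,N\}$, which is the first claim.

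For the second identity I would start from formula~\eqref{eq:kcomponentelefteigenI}, which already expresses the biorthonormal left eigenvector component as
\[
w^{\langle N\rangle}_{k,n}=\frac{Q_{n-1,N}\big(\lambda^{[N]}_k\big)}{Q_{N,N}\big(\lambda^{[N]}_k\big)B'_{N+1}\big(\lambda^{[N]}_k\big)}.
\]
Now substitute the first identity (with index shifted to $n-1$, valid since $0\le n-1\le N$ for $n\in\{1,\dots,N+1\}$) to get $Q_{n-1,N}(\lambda^{[N]}_k)/Q_{N,N}(\lambda^{[N]}_k)=B^{[n]}_{N+1}(\lambda^{[N]}_k)$, hence $w^{\langle N\rangle}_{k,n}=B^{[n]}_{N+1}(\lambda^{[N]}_k)/B'_{N+1}(\lambda^{[N]}_k)$. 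Alternatively — and this is worth recording as the conceptual reason — one can bypass~\eqref{eq:kcomponentelefteigenI} and argue directly: the biorthonormal left eigenvector is the scalar multiple of $\omega^{\langle N\rangle}_k$ whose pairing with the right eigenvector $u^{\langle N\rangle}_k=B^{\langle N\rangle}(\lambda^{[N]}_k)$ equals $1$, and by the confluent Christoffel–Darboux formula~\eqref{eq:CD1_confluent} this pairing is $\sum_{n=0}^N B^{[n+1]}_{N+1}(\lambda^{[N]}_k)B_n(\lambda^{[N]}_k)=B'_{N+1}(\lambda^{[N]}_k)$, so the normalizing factor is precisely $1/B'_{N+1}(\lambda^{[N]}_k)$ and $w^{\langle N\rangle}_{k,n}=\omega^{\langle N\rangle}_{k,n}/B'_{N+1}(\lambda^{[N]}_k)=B^{[n]}_{N+1}(\lambda^{[N]}_k)/B'_{N+1}(\lambda^{[N]}_k)$. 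Either route completes the proof; I would present the short substitution argument and remark on the Christoffel–Darboux interpretation.
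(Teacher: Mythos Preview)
Your proposal is correct and follows essentially the same route as the paper: the paper's proof is the one-liner ``Use the normalized left eigenvectors $\omega^{\langle N\rangle}_k$ and Equation~\eqref{eq:kcomponentelefteigenI},'' which is exactly your argument---identify $Q^{\langle N\rangle}(\lambda^{[N]}_k)/Q_{N,N}(\lambda^{[N]}_k)$ with $\omega^{\langle N\rangle}_k$ by uniqueness of the (last-entry-normalized) left eigenvector, then plug into~\eqref{eq:kcomponentelefteigenI}. Your added remark via the confluent Christoffel--Darboux formula~\eqref{eq:CD1_confluent} is a nice alternative normalization check but not needed.
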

\begin{proof}
	Use the normalized left eigenvectors $\omega^{\langle N\rangle}_k$ given in \eqref{eq:recurrence_bitruncation} and Equation \eqref{eq:kcomponentelefteigenI}.
\end{proof}

\begin{defi}[Second kind polynomials]
 The second kind polynomials $B^{(k)}_{N+1}$, $k\in\{1,\dots, p\}$, are the entries of the following row vector
 \begin{align*}
 \begin{bNiceMatrix}
 B^{(1)}_{N+1}& \Cdots & B^{(p)}_{N+1}
 \end{bNiceMatrix}= \begin{bNiceMatrix}
 B^{[1]}_{N+1}& \Cdots & B^{[p]}_{N+1}
\end{bNiceMatrix}
\nu^{-\top}.
 \end{align*}
\end{defi}
If $\{e_1,\dots,e_p\}$ is the canonical basis of $\R^p$ we have the modified basis
\begin{align*}
& \nu^{-\top}e_a, &a&\in\{1,\dots,p\}.
\end{align*}
Now, to get $ e_a^\nu$ we embed these vectors into $\R^N$, by adding zeros after the $p$-th entry.
For example,
$ e_1^\nu=e_1$, $e_2^{\nu}=e_2-\nu^{(1)}_{1}e_1$ and $ e_3^{\nu}= e_3-\nu^{(2)}_{2}e_2+\big(\nu^{(1)}_{1}\nu^{(2)}_{2}-\nu^{(1 )}_{2}\big)e_1$.
In the next result $\operatorname{adj}A$ denotes the adjugate matrix of the matrix $A$, see \cite{Horn-Johnson}.
\begin{pro}
 The second kind polynomials are given by
 \begin{align*}
 B^{(a)}_{N+1}(x)&\coloneq e_1^\top \operatorname{adj}(xI_{N+1}-T^{[N]})e^\nu_a, &a&\in\{1,\dots,p\}.
 \end{align*}
\end{pro}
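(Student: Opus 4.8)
The plan is to first establish, as an identity of polynomial row vectors in $x$, the formula
\begin{align*}
\begin{bNiceMatrix} B^{[1]}_{N+1}(x) & \Cdots & B^{[N+1]}_{N+1}(x)\end{bNiceMatrix} = e_1^\top\operatorname{adj}\big(xI_{N+1}-T^{[N]}\big),
\end{align*}
and then to read off the statement by multiplying on the right by $\begin{bNiceMatrix} e_1 & \Cdots & e_p\end{bNiceMatrix}\nu^{-\top}$, using the definition of the second kind polynomials together with the observation that the $a$-th column of $\begin{bNiceMatrix} e_1 & \Cdots & e_p\end{bNiceMatrix}\nu^{-\top}$ is $\sum_{k=1}^p(\nu^{-\top})_{k,a}e_k=e^\nu_a$.

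To obtain the row-vector formula I would start from the identity used in the proof that the $\omega^{\langle N\rangle}_n$ are left eigenvectors, namely that the dual recurrence \eqref{eq:recurrence_bitruncation} is equivalent to
\begin{align*}
\begin{bNiceMatrix} B^{[1]}_{N+1}& \Cdots & B^{[N+1]}_{N+1}\end{bNiceMatrix}T^{[N]}+\begin{bNiceMatrix} B^{[0]}_{N+1} & 0 & \Cdots & 0\end{bNiceMatrix}=x\begin{bNiceMatrix} B^{[1]}_{N+1}& \Cdots & B^{[N+1]}_{N+1}\end{bNiceMatrix}.
\end{align*}
Since $B^{[0]}_{N+1}=B_{N+1}$, this rearranges to $\begin{bNiceMatrix} B^{[1]}_{N+1}& \Cdots & B^{[N+1]}_{N+1}\end{bNiceMatrix}\big(xI_{N+1}-T^{[N]}\big)=B_{N+1}(x)\,e_1^\top$. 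I would then multiply this on the right by $\operatorname{adj}(xI_{N+1}-T^{[N]})$ and invoke the elementary adjugate identity $M\operatorname{adj}(M)=(\det M)I$ together with \eqref{eq:det_B}, which gives $\det(xI_{N+1}-T^{[N]})=B_{N+1}(x)$; the left-hand side then collapses and one is left with the previous display multiplied through by the scalar $B_{N+1}(x)$.

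The remaining step is to cancel the factor $B_{N+1}(x)$. This is legitimate at the level of polynomials: both sides are polynomial row vectors in $x$ and $B_{N+1}$ is a nonzero polynomial (monic of degree $N+1$), so the entries agree at all but finitely many $x$ and hence identically. In particular $B^{[k]}_{N+1}(x)=e_1^\top\operatorname{adj}(xI_{N+1}-T^{[N]})e_k$ for $k\in\{1,\dots,N+1\}$; restricting to $k\in\{1,\dots,p\}$ and composing with $\nu^{-\top}$ yields $B^{(a)}_{N+1}(x)=e_1^\top\operatorname{adj}(xI_{N+1}-T^{[N]})e^\nu_a$ for $a\in\{1,\dots,p\}$. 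I do not anticipate a genuine obstacle: the argument is pure linear algebra plus the polynomial-identity principle, and the only point deserving attention is precisely this cancellation, which must be phrased for polynomials rather than carried out by dividing by $B_{N+1}(x)$ for a fixed $x$.
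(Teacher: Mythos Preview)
Your argument is correct. The paper states this proposition without proof, so there is nothing to compare against directly; your route via the identity $M\operatorname{adj}(M)=(\det M)I$ and polynomial cancellation is clean and fully rigorous, and your handling of the cancellation (comparing polynomial entries rather than dividing at a fixed $x$) is exactly the right care to take.

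For what it is worth, the formula can also be read off by computing the first row of the adjugate entrywise: the $(1,k)$ entry of $\operatorname{adj}(xI_{N+1}-T^{[N]})$ is $(-1)^{1+k}$ times the minor obtained by deleting row $k$ and column $1$, and because $xI_{N+1}-T^{[N]}$ is lower Hessenberg this minor is block lower triangular, with a $(k-1)\times(k-1)$ upper-triangular block carrying $-1$'s on its diagonal and the complementary block equal to $xI_{N+1-k}-T^{[N,k]}$; the signs cancel and one recovers $B^{[k]}_{N+1}(x)$ directly. This is presumably what the authors have in mind when they later cite ``adjugate expressions'' in the proof of Proposition~\ref{pro:Weyl_functions}. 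Your approach trades this explicit cofactor computation for the already-established left-eigenvector relation, which is a perfectly good exchange.
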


The following technical result will be useful.
\begin{lemma}\label{eq:truco_o_trato}
	For matrices $\mathscr U$ and $\mathscr W$ as given in \eqref{eq:UW} we find
	\begin{align*}
		e_1^\top \mathscr U&=\begin{bNiceMatrix}
			1 &\Cdots &1
		\end{bNiceMatrix}, &
	\mathscr W e_a^\nu&= \begin{bNiceMatrix}
		\mu^{[N]}_{1,a}\\\Vdots\\ \mu^{[N]}_{N+1,a}
	\end{bNiceMatrix}, & a&\in\{1,\dots,p\}.
	\end{align*}
\end{lemma}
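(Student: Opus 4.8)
The statement to prove is Lemma~\ref{eq:truco_o_trato}, namely that $e_1^\top\mathscr U=\begin{bNiceMatrix}1&\Cdots&1\end{bNiceMatrix}$ and $\mathscr W e_a^\nu$ has entries $\mu^{[N]}_{j,a}$. Both identities are direct consequences of what has already been set up, so the proof will essentially be a bookkeeping argument. First I would observe that the first identity is immediate from the explicit form of $\mathscr U$ in \eqref{eq:UW}: its first row consists of the entries $B_0\big(\lambda^{[N]}_k\big)$ for $k=1,\dots,N+1$, and since $B_0=1$ identically (this is the initial condition in Definition~\ref{def:typeII}), every entry of $e_1^\top\mathscr U$ equals $1$. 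So the first identity needs only one line.

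For the second identity, the strategy is to combine the determinantal expression for $w^{\langle N\rangle}_{k,n}$ in terms of type~I polynomials with the change-of-basis encoded by $\nu$. I would start from \eqref{eq:discrete_linear_form_typeI}, which says that the $n$-th entry of the $k$-th left eigenvector is $w^{\langle N\rangle}_{k,n}=\sum_{a=1}^p A^{(a)}_{n-1}\big(\lambda^{[N]}_k\big)\mu^{[N]}_{k,a}$. Viewing the $k$-th row of $\mathscr W$ as this vector and pairing it with $e_a^\nu$, I would note that $e_a^\nu$ was constructed precisely so that the linear form $\sum_n (e_a^\nu)_n A^{(\cdot)}_{n-1}$ picks out the $a$-th type~I sequence; more concretely, the relation $\begin{bNiceMatrix}B^{(1)}_{N+1}&\Cdots&B^{(p)}_{N+1}\end{bNiceMatrix}=\begin{bNiceMatrix}B^{[1]}_{N+1}&\Cdots&B^{[p]}_{N+1}\end{bNiceMatrix}\nu^{-\top}$ together with \eqref{eq:kcomponentelefteigenII} exhibits the matrix $\nu^{-\top}$ as exactly the transition sending $w^{\langle N\rangle}_{k,1},\dots,w^{\langle N\rangle}_{k,p}$ to $\mu^{[N]}_{k,1},\dots,\mu^{[N]}_{k,p}$. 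Thus, $\mathscr W e_a^\nu$ computes, row by row in $k$, the $a$-th entry of $\begin{bNiceMatrix}w^{\langle N\rangle}_{k,1}&\Cdots&w^{\langle N\rangle}_{k,p}\end{bNiceMatrix}\nu^{-\top}$, which by \eqref{eq:kcomponentelefteigenII} is $\mu^{[N]}_{k,a}$.

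The one point that needs care — and which I'd expect to be the main (minor) obstacle — is matching the index conventions: $\mathscr W$ is the full $(N+1)\times(N+1)$ matrix of left eigenvector entries $w^{\langle N\rangle}_{k,n}$ for $n=1,\dots,N+1$, whereas $e_a^\nu$ lives in $\R^{N+1}$ with support only in its first $p$ coordinates (the embedding ``adding zeros after the $p$-th entry'' described just before the statement). So the product $\mathscr W e_a^\nu$ only sees the first $p$ columns of $\mathscr W$, i.e. $w^{\langle N\rangle}_{k,1},\dots,w^{\langle N\rangle}_{k,p}$, and the zeros kill the rest; this is exactly the truncation under which \eqref{eq:kcomponentelefteigenII} was stated. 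Once this alignment is made explicit, the computation
\[
\big(\mathscr W e_a^\nu\big)_k=\sum_{n=1}^{p} w^{\langle N\rangle}_{k,n}\,(\nu^{-\top})_{n,a}=\mu^{[N]}_{k,a}
\]
closes the argument, using \eqref{eq:kcomponentelefteigenII} in its row form. I would write the proof in two short paragraphs, one per identity, with the second invoking \eqref{eq:discrete_linear_form_typeI} or directly \eqref{eq:kcomponentelefteigenII} and the definition of $e_a^\nu$.
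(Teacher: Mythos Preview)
Your proposal is correct and matches the paper's approach exactly: the first identity follows since the first row of $\mathscr U$ consists of $B_0\big(\lambda^{[N]}_k\big)=1$, and the second follows because $e_a^\nu$ has support only in its first $p$ entries, so $\mathscr W e_a^\nu$ is the linear combination of the first $p$ columns of $\mathscr W$ prescribed by $\nu^{-\top}$, which by \eqref{eq:kcomponentelefteigenII} (equivalently \eqref{eq:mu_W_e_nu}) yields $\mu^{[N]}_{k,a}$. Your care about the index alignment and the zero-padding of $e_a^\nu$ is exactly the right point to make explicit.
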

\begin{proof}
On the one hand, the entries in the first row of $\mathscr U$ are all $1$. On the other hand,
$	\mathscr W e_a^\nu$, is a adequate linear combination of the first $p$ columns of $\mathscr W$ with coefficients dictated by $\nu^{-\top}$. Now use \eqref{eq:mu_W_e_nu} to identify the corresponding column.
\end{proof}

The moments of these $p$ discrete measures are linked to the components of the powers of $T^{[N]}$:
\begin{pro}[Discrete moments]\label{pro:eq:moments_discrete}
	For the discrete moments we have
	\begin{align}\label{eq:moments_discrete}
		\left	\langle \mu^{[N]}_a, x^n\right\rangle&=\sum_{k=1}^{N+1}\mu_{k,a}^{[N]}\big( \lambda_k^{[N]}\big)^n
		=e_1^\top \big(T^{[N]}\big)^ne^\nu_a, & a&\in\{1,\dots, p\}.
	\end{align}
	\begin{proof}
		We have that $	e_1^\top \big(T^{[N]}\big)^ne^\nu_a=	e_1^\top \mathscr U D^n \mathscr We^\nu_a$, and \eqref{eq:truco_o_trato} leads to
		\begin{align*}
			e_1^\top \big(T^{[N]}\big)^ne^\nu_a=\begin{bNiceMatrix}
				1&\Cdots&1
			\end{bNiceMatrix}D^n
			\begin{bNiceMatrix}
				\mu^{[N]}_{1,a}\\\Vdots\\ \mu^{[N]}_{N+1,a}
			\end{bNiceMatrix}
		\end{align*}
		and the result follows.
	\end{proof}

We now discuss on the resolvent matrix $ R^{[N]}_{z}$ of the leading principal submatrix $ T^{[N]}$; i.e.,
\begin{align*}
 R_{z}^{[N]}\coloneq \big(z I_{N+1}- T^{[N]}\big)^{-1}=\frac{\operatorname{adj}\big(z I_{N+1}- T^{[N]}\big)}{\det(z I_{N+1}-T^{[N]})}.
\end{align*}
Notice that, from the spectral decomposition of the matrix $ T^{[N]}$, we obtain 
 \begin{align}\label{eq:spectral_resolvent}
 R_{z}^{[N]}
 & =\mathscr U(zI_{N+1}-D)^{-1}\mathscr W. \end{align}
\begin{defi}[Weyl's functions]
 We consider the set of Weyl functions $\big\{S_a^{[N]}\big\}_{a=1}^p$ defined by
\begin{align*}
 S_{a}^{[N]}(z)&\coloneq e_1^\top\big(zI_{N+1}- T^{[N]}\big)^{-1} e^\nu_a.
\end{align*}
\end{defi}

\begin{pro}\label{pro:Weyl_functions}
The Weyl functions can be expressed as follows
 \begin{align*}
 S_{a}^{[N]}(z)&= \frac{B^{(a)}_{N+1}(z)}{B_{N+1}(z)}=\sum_{n=1}^{N+1}\frac{\mu^{[N]}_{n,a}}{z-\lambda^{[N]}_n}=\left\langle
 \mu_a^{[N]},\frac{1}{z-x}
 \right\rangle, & a&\in\{1,\dots, p\}.
 \end{align*}
\end{pro}
\begin{proof}
The first equalities follow from adjugate expressions in
Proposition \ref{pro:determintal_second_kind}.
The second expressions can be deduced from \eqref{eq:spectral_resolvent}. Indeed, recalling \eqref{eq:truco_o_trato} we get that
 the Weyl functions are
\begin{align*}
 S_a^{[N]}(z)&= \begin{bNiceMatrix}
 1&\Cdots&1
 \end{bNiceMatrix}(zI_{N+1}-D)^{-1}
 \begin{bNiceMatrix}
 \mu^{[N]}_{1,a}\\\Vdots\\ \mu^{[N]}_{N+1,a}
 \end{bNiceMatrix}=\sum_{n=1}^{N+1}\frac{\mu^{[N]}_{n,a}}{z-\lambda^{[N]}_{n}}.
\end{align*}
\end{proof}
\begin{rem}
 The Christoffel coefficients are residues at the simple poles of these Weyl functions.
\end{rem}

	\end{pro}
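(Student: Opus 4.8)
The plan is to establish the two equalities in \eqref{eq:moments_discrete} in turn. The first, $\langle \mu^{[N]}_a, x^n\rangle = \sum_{k=1}^{N+1}\mu_{k,a}^{[N]}\big(\lambda_k^{[N]}\big)^n$, is immediate from the definition \eqref{eq:discrete_mesures} of $\mu^{[N]}_a$ as a sum of Dirac masses located at the zeros $\lambda^{[N]}_k$ of $B_{N+1}$ with weights $\mu^{[N]}_{k,a}$: pairing this linear functional against the monomial $x^n$ evaluates $x^n$ at each support point, weighted by the corresponding mass. So this step is purely definitional and requires nothing beyond \eqref{eq:discrete_mesures}.

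For the second (and substantive) equality, the engine is the spectral decomposition \eqref{eq:UDnW=Jn}, which reads $\big(T^{[N]}\big)^n=\mathscr U D^n\mathscr W$, with $\mathscr U$ the matrix of right eigenvectors, $\mathscr W$ the biorthogonal matrix of left eigenvectors, and $D=\diag\big(\lambda_1^{[N]},\dots,\lambda_{N+1}^{[N]}\big)$. Substituting this into the right-hand target gives $e_1^\top\big(T^{[N]}\big)^ne^\nu_a=e_1^\top\mathscr U\,D^n\,\mathscr W e^\nu_a$, so the computation reduces to evaluating the two boundary contractions $e_1^\top\mathscr U$ and $\mathscr W e^\nu_a$ and then sandwiching the diagonal factor $D^n$ between them.

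Here I would invoke Lemma \ref{eq:truco_o_trato}, which is tailored precisely to these two contractions. It identifies $e_1^\top\mathscr U$ as the all-ones row vector, because the first row of $\mathscr U$ in \eqref{eq:UW} consists of the values $B_0\big(\lambda_k^{[N]}\big)=1$; and it identifies $\mathscr W e^\nu_a$ as the column $\big[\mu^{[N]}_{1,a},\dots,\mu^{[N]}_{N+1,a}\big]^\top$ of Christoffel coefficients, because contracting $\mathscr W$ against the modified basis vector $e^\nu_a$ forms exactly the linear combination of columns dictated by $\nu^{-\top}$, which by \eqref{eq:mu_W_e_nu} reproduces the masses. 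Since $D^n$ carries the entries $\big(\lambda_k^{[N]}\big)^n$ on its diagonal, the resulting product collapses to $\sum_{k=1}^{N+1}\big(\lambda_k^{[N]}\big)^n\mu^{[N]}_{k,a}$, matching the middle expression and closing the chain of equalities.

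The argument is an assembly of previously established facts, so there is no genuine analytic obstacle; the one point demanding care is the bookkeeping of the modified basis vector $e^\nu_a$ through $\mathscr W$, ensuring that $\mathscr W e^\nu_a$ yields the masses $\mu^{[N]}_{k,a}$ rather than the raw left-eigenvector components $w^{\langle N\rangle}_{k,a}$. This is exactly what the twist by $\nu^{-\top}$ in the definition of $e^\nu_a$ accomplishes, and it is precisely the content of Lemma \ref{eq:truco_o_trato}, so once that lemma is in hand the proof is short.
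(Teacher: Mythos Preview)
Your proposal is correct and follows essentially the same approach as the paper: both invoke the spectral decomposition \eqref{eq:UDnW=Jn} to write $e_1^\top\big(T^{[N]}\big)^ne^\nu_a=e_1^\top\mathscr U D^n\mathscr W e^\nu_a$, then apply Lemma~\ref{eq:truco_o_trato} to evaluate the two boundary contractions and collapse the product to the desired sum. Your write-up is slightly more explicit in separating out the first (definitional) equality, but the substance is identical.
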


\subsection{Favard theorem}

At this point we are ready to give one of the main results of the paper, that establish the existence of multiple orthogonal polynomials and corresponding positive
Lebesgue--Stieltjes
measures for a given bounded banded Hessenberg matrix that admit a positive bidiagonal factorization. The result is based in the positivity of the Christoffel coefficients established in Theorem \ref{theorem:Christoffel_positivy}.

Being $T^{[N]}$ oscillatory, so that the zeros of $B_{N+1}(x)$ strictly interlaces the ones of $B_N(x)$, we get that the positive sequences of eigenvalues $\{\lambda_1^{[N]}\}_{N=1}^\infty$ ($\{\lambda_{N+1}^{[N]})\}_{N=1}^\infty$ are strictly increasing (decreasing) sequences.
Moreover, for bounded operators $\|T\|_\infty <\infty$ we have $\|T^{[N]}\|_\infty <\|T\|_\infty <\infty$. Therefore, there exists the limits
\begin{align*}
	\xi&\coloneq \lim_{N\to\infty }\lambda_{N+1}^{[N]}, & \eta&\coloneq \lim_{N\to\infty }\lambda_1^{[N]}, 
\end{align*} 
with $\xi\geqslant 0$ and $\eta\leqslant \|T\|_\infty$. Following \cite{Chihara} we call $\Delta\coloneq [\xi,\eta]\subseteq [0,\|T\|_\infty]$ the true interval of orthogonality, that is the smallest interval containing all zeros of the type II polynomials $B_n$, i.e. eigenvalues of the truncations of $T$. By construction the measures in clear that $\operatorname{supp}\d\psi_a\subseteq \Delta$. However, the reverse inclusion is not as clear as in the simple orthogonality. For example it holds for AT systems.
%However, 
%for Angelescu and AT systems the zeros of the orthogonal polynomials belong 

\begin{teo}[Favard spectral representation]\label{theorem:spectral_representation_bis}
 Let us assume that
 \begin{enumerate}
 	\item The banded Hessenberg matrix $T$ is bounded and admit the positive bidiagonal factorization \eqref{eq:bidiagonal}.
 \item The sequences $\big\{A^{(1)}_n,\dots,A^{(p)}_n\big\}_{n=0}^\infty$ of recursion polynomials of type I, see Definition \ref{def:typeI}, are determined by the initial condition matrix $\nu$ such that $\nu^{-\top}=\mathscr L\mathscr C$, where $\mathscr L$ is given in Definition \ref{def:L_alpha} and $\mathscr C\in \R^{p\times p}$ is a nonnegative upper unitriangular matrix.
 \end{enumerate}
%Then, 
% The corresponding sequence $\{B_n\}_{n=0}^\infty$ of recursion polynomials of type II, see Definition\ref{def:typeII}, 
 Then, there exists $p$ non decreasing functions $\{\psi_k\}_{k=1}^p$, and corresponding positive Lebesgue--Stieltjes measures $\d\psi_k$ with compact support $\Delta$ such that the following biorthogonality between the sequence $\{B_n\}_{n=0}^\infty$ of recursion polynomials of type II and the mentioned type I recursion polynomials, see Definition\ref{def:typeII}, holds
 \begin{align*}
 \int_{\Delta} \big( A^{(1)}_{k}(x)\d \psi_1(x)+\cdots+A^{(p)}_{k}(x)\d \psi_p(x)\big)B_l(x)&= \delta_{k,l}, &k,l&\in\N_0.
 \end{align*}
\end{teo}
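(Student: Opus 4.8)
The plan is to pass to the limit $N\to\infty$ in the discrete picture already assembled in the previous subsections. Under hypothesis (2), Theorem~\ref{theorem:Christoffel_positivy} guarantees that the Christoffel coefficients $\mu^{[N]}_{k,a}$ are positive, so the discrete measures $\mu^{[N]}_a$ of \eqref{eq:discrete_mesures} are genuine positive measures and the corresponding distribution functions $\psi^{[N]}_a$ of the Lebesgue--Stieltjes representation are non-decreasing. Since $T^{[N]}$ is oscillatory, all zeros $\lambda^{[N]}_j$ of $B_{N+1}$ lie in $[0,\|T\|_\infty]$, hence $\operatorname{supp}\mu^{[N]}_a\subseteq[0,\|T\|_\infty]$; moreover the total mass $\psi^{[N]}_a(+\infty)=(\nu^{-\top})_{1,a}$ is a constant independent of $N$. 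Thus, for each $a\in\{1,\dots,p\}$, the family $\{\psi^{[N]}_a\}_N$ consists of uniformly bounded non-decreasing functions supported on a fixed compact interval.

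First I would invoke Helly's selection theorem: as $p$ is finite, one single subsequence $N_j\to\infty$ can be chosen along which $\psi^{[N_j]}_a$ converges at every continuity point to a bounded non-decreasing function $\psi_a$, for all $a=1,\dots,p$ simultaneously; the limiting measures $\d\psi_a$ are positive and supported in $[0,\|T\|_\infty]$, hence in the true interval of orthogonality $\Delta=[\xi,\eta]$ (recall $\lambda^{[N]}_1\downarrow\eta$ and $\lambda^{[N]}_{N+1}\uparrow\xi$). Next I would control the moments. By Proposition~\ref{pro:eq:moments_discrete}, $\langle\mu^{[N]}_a,x^n\rangle=e_1^\top (T^{[N]})^n e^\nu_a$, and because $T$ is banded lower Hessenberg while $e^\nu_a$ is supported on the first $p$ entries, this quantity does not depend on $N$ once $N$ is large relative to $n$ (the relevant entries of $(T^{[N]})^n$ coincide with those of $T^n$): the discrete moments stabilise to $m_{n,a}:=e_1^\top T^n e^\nu_a$. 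Testing the weak convergence against $x\mapsto x^n$ on the compact support then yields $\int_\Delta x^n\,\d\psi_a(x)=m_{n,a}$ for all $n\in\N_0$; since the support is compact the Hausdorff moment problem is determinate, so $\d\psi_a$ is the unique positive measure with these moments, the limit is independent of the subsequence, and the full sequence $\psi^{[N]}_a$ converges.

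Finally I would push the discrete biorthogonality of Theorem~\ref{theorem:birothoganality} to the limit. Fixing $k,l\in\N_0$, for every $N\geqslant\max\{k,l\}$ one has $\langle A^{(1)}_{k}\mu^{[N]}_1+\cdots+A^{(p)}_{k}\mu^{[N]}_p,B_l\rangle=\delta_{k,l}$, and the left-hand side expands as a finite linear combination, with coefficients independent of $N$, of the discrete moments $\langle\mu^{[N]}_a,x^m\rangle$ with $m\leqslant\deg A^{(a)}_{k}+l$. By the moment stabilisation above this combination is eventually constant and equals $\sum_{a=1}^{p}\int_\Delta A^{(a)}_{k}(x)B_l(x)\,\d\psi_a(x)$; hence $\int_\Delta\big(A^{(1)}_{k}\,\d\psi_1+\cdots+A^{(p)}_{k}\,\d\psi_p\big)B_l=\delta_{k,l}$, which is the assertion.

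I expect the only delicate point to be the bookkeeping that makes the limit legitimate: one must handle all $p$ distribution functions along a common subsequence (immediate since $p<\infty$), check that the limiting functions are honest non-decreasing functions of bounded variation so that $\d\psi_a$ is a bona fide positive Lebesgue--Stieltjes measure, and observe that the moment stabilisation is \emph{exact} rather than merely asymptotic thanks to the band structure of $T$ — it is precisely this that allows the \emph{polynomial} pairings, and not only continuous test functions, to pass to the limit. The substantive input is the positivity of the Christoffel coefficients from Theorem~\ref{theorem:Christoffel_positivy}; the rest is a compactness-plus-moments argument.
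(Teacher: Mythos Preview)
Your argument is correct and follows essentially the same route as the paper's proof: positivity of the Christoffel coefficients (Theorem~\ref{theorem:Christoffel_positivy}) makes the $\psi^{[N]}_a$ uniformly bounded and non-decreasing on a fixed compact interval, and Helly's selection theorem produces the limiting non-decreasing functions $\psi_a$. Your write-up is in fact more complete than the paper's three-sentence proof, which does not spell out the passage of the biorthogonality to the limit; your observation that the discrete moments $e_1^\top\big(T^{[N]}\big)^n e^\nu_a$ stabilise \emph{exactly} for large $N$ by the band structure (so the polynomial pairings pass to the limit without any approximation) and the appeal to Hausdorff determinacy for uniqueness are exactly the missing details.
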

\begin{proof}
 The sequences $\big\{\psi_{k}^{[N]}\big\}_{N=0}^\infty$, $k\in\{1,\dots,p\}$ are uniformly bounded and nondecreasing.
 Consequently, following Helly's results, see \cite[\S II]{Chihara} there exist subsequences that converge when $N\to\infty$ to nondecreasing functions $\psi_1,\dots,\psi_p$. Being $T$ bounded its eigenvalues lay in a bounded set $\Delta$, and we deduce that these measures have compact support.
\end{proof}

Consequently,

\begin{coro}\label{cor:mor}
The multiple orthogonal relations of type II 
\begin{align*}
 \int_\Delta x^m B_{kp +j}\d \psi_r(x)&=0, & m&=0,\dots,k,& r=1, \dots, j ,\\
 \int_\Delta x^m B_{kp +j}\d \psi_r(x)&=0, & m&=0,\dots,k-1,& r=j+1, \dots, p ,\end{align*}
 with $j = 0 , \ldots , p-1$,
and of type I
\begin{align*}
 \int_{\Delta} \big( A^{(1)}_{kp+j}(x)\d \psi_1(x)+\cdots + A^{(p)}_{kp+j}(x)\d \psi_p(x)\big)x^n&=0, & n&\in\{0,1,\dots,kp+j-1\},
\end{align*}
are satisfied.
\end{coro}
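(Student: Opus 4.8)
The statement is Corollary~\ref{cor:mor}, which extracts the standard multiple orthogonality relations of types~I and~II from the biorthogonality established in Theorem~\ref{theorem:spectral_representation_bis}. Since that biorthogonality has already been proven, the entire task is to translate ``$\int_\Delta (A^{(1)}_k\d\psi_1+\cdots+A^{(p)}_k\d\psi_p)B_l=\delta_{k,l}$ for all $k,l\in\N_0$'' into the component form indexed by $kp+j$. The plan is to argue, just as in the discrete setting above (where the analogous Corollary on discrete multiple orthogonalities was deduced from Theorem~\ref{theorem:birothoganality}), that the span of $\{B_0,\dots,B_m\}$ equals the space of polynomials of degree at most $m$, and that the span of the linear forms $\{A^{(1)}_k\d\psi_1+\cdots+A^{(p)}_k\d\psi_p\}_{k=0}^{m}$ is a space of forms whose degree structure is controlled by the degree count $\deg A^{(r)}_{kp+j}=k$ for $r\le j+1$ and $\deg A^{(r)}_{kp+j}=k-1$ for $r\ge j+2$, which was recorded in the excerpt immediately before the discrete corollary.

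For the type~II relations: fix $k,j$ with $kp+j\le$ anything (the limit $N\to\infty$ removes the truncation constraint). Since $\deg B_l=l$ and $B_l$ is monic, $\{B_0,\dots,B_{kp+j-1}\}$ is a basis of $\R_{kp+j-1}[x]$, so each monomial $x^m$ with $m\le kp+j-1$ is a linear combination of $B_0,\dots,B_{kp+j-1}$. Pairing $B_{kp+j}$ against such an $x^m$ under the form $A^{(1)}_l\d\psi_1+\cdots+A^{(p)}_l\d\psi_p$ for the appropriate $l<kp+j$ and invoking $\delta_{k,l}=0$ will give $\int_\Delta x^m B_{kp+j}\d\psi_r=0$ for every individual $r$ — but here the care is that the biorthogonality only controls the \emph{combined} form. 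To separate the measures one argues as follows: from the initial conditions \eqref{eq:initcondtypeI}, the lowest-index type~I polynomials $A^{(1)}_0,\dots,A^{(p)}_{p-1}$ reproduce (via the triangular matrix $\nu$) the first $p$ coordinate forms; iterating the recursion \eqref{eq:recursion_dual_A} and tracking degrees via the stated formula, the forms $\{A^{(1)}_l\d\psi_1+\cdots+A^{(p)}_l\d\psi_p\}_{l=0}^{kp+j-1}$ have, as $l$ ranges, exactly enough freedom in the $r$-th slot to force $\int_\Delta x^m B_{kp+j}\d\psi_r=0$ for $m\le k$ when $r\le j$ and for $m\le k-1$ when $r\ge j+1$. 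In other words, the diagonal structure of the degrees is precisely what distributes the vanishing correctly among $\psi_1,\dots,\psi_p$; this is the same bookkeeping that yielded the discrete corollary, so I would write ``the argument is identical to that preceding the discrete corollary, now using Theorem~\ref{theorem:spectral_representation_bis} in place of Theorem~\ref{theorem:birothoganality}.''

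For the type~I relations: since $\deg B_l=l$, the monomials $1,x,\dots,x^{kp+j-1}$ span the same space as $B_0,\dots,B_{kp+j-1}$, so it suffices to compute $\int_\Delta(A^{(1)}_{kp+j}\d\psi_1+\cdots+A^{(p)}_{kp+j}\d\psi_p)B_l$ for $l\le kp+j-1$, and each such integral is $\delta_{kp+j,l}=0$. Expanding $x^n$ ($n\le kp+j-1$) in the basis $\{B_l\}$ and using linearity immediately gives $\int_\Delta(A^{(1)}_{kp+j}\d\psi_1+\cdots+A^{(p)}_{kp+j}\d\psi_p)x^n=0$ for all $n\in\{0,1,\dots,kp+j-1\}$.

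The only real obstacle is the degree-separation step for the type~II relations: one must be confident that the degree table $\deg A^{(r)}_{kp+j}=k$ ($r\le j+1$), $=k-1$ ($r\ge j+2$) together with the triangular initial-condition matrix $\nu$ really does let one isolate each $\psi_r$. But this is exactly the content already established for the discrete measures $\mu^{[N]}_a$ in the Corollary on discrete multiple orthogonalities, and taking $N\to\infty$ (using weak convergence $\psi^{[N]}_a\to\psi_a$ from Theorem~\ref{theorem:spectral_representation_bis}, applied to the fixed polynomial integrand $x^m B_{kp+j}$, which is continuous on the compact set $\Delta$) preserves each vanishing relation. Hence the corollary follows by passing to the limit in the discrete multiple orthogonality relations.
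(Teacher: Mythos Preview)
Your proposal is correct and in fact supplies more detail than the paper does: the paper gives no proof at all, simply writing ``Consequently,'' before the corollary and relying on the reader to carry over verbatim the argument implicit in the earlier discrete Corollary on multiple discrete orthogonalities (which itself is stated without proof). Your two routes---either reproducing the degree-counting argument directly from the continuous biorthogonality of Theorem~\ref{theorem:spectral_representation_bis}, or passing to the limit in the discrete multiple orthogonality relations via Helly's theorem applied to the fixed polynomial integrands---are both valid and are exactly what the paper expects the reader to fill in.
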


\begin{pro}[Spectral representation of moments and Stieltjes--Markov functions]\label{pro:spectral_moments_Weyl}
 In terms of the spectral functions $\psi_1,\dots,\psi_p$ we find
\begin{align*}
 e_1^\top T^n e^\nu_a&=\int_\Delta x^n\d\psi_a(x), &\hat \psi_a\coloneq e_1^\top (z I- T)^{-1}e^\nu_a&=\int_\Delta\frac{\d\psi_a(x)}{z-x}, & a&\in\{1,\dots,p\}.
\end{align*}
\end{pro}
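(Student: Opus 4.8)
The plan is to derive both identities by passing to the limit $N\to\infty$ in the corresponding statements for the truncations $T^{[N]}$ — namely Proposition~\ref{pro:eq:moments_discrete} for the moments and Proposition~\ref{pro:Weyl_functions} for the Weyl functions — using the weak convergence $\psi_a^{[N]}\to\psi_a$ supplied by Helly's theorem in the proof of Theorem~\ref{theorem:spectral_representation_bis} (see \cite[\S II]{Chihara}).

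For the moments I would first observe a stabilization: fix $n$ and $a$. Since $T$ is banded lower Hessenberg and $e^\nu_a$ is supported in the first $p$ entries, a path-counting argument shows that $e_1^\top T^n e^\nu_a$ only involves the entries $T_{r,s}$ with $r,s\leqslant n+p$; hence $e_1^\top\big(T^{[N]}\big)^n e^\nu_a = e_1^\top T^n e^\nu_a$ for all $N\geqslant n+p$. By Proposition~\ref{pro:eq:moments_discrete} the left-hand side equals $\langle\mu_a^{[N]},x^n\rangle=\int x^n\,\d\psi_a^{[N]}$. Taking $N\to\infty$ along the subsequence used in the proof of Theorem~\ref{theorem:spectral_representation_bis} and applying the Helly--Bray theorem (the integrand $x^n$ is continuous and all supports lie in the fixed compact set $[0,\|T\|_\infty]$), I obtain $e_1^\top T^n e^\nu_a=\int_\Delta x^n\,\d\psi_a(x)$.

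For the Stieltjes--Markov functions I would work first on the region $|z|>\|T\|_\infty$. There the Neumann expansions $\big(zI_{N+1}-T^{[N]}\big)^{-1}=\sum_{n\geqslant0}z^{-n-1}\big(T^{[N]}\big)^n$ and $(zI-T)^{-1}=\sum_{n\geqslant0}z^{-n-1}T^n$ converge, uniformly in $N$, because $\|T^{[N]}\|_\infty\leqslant\|T\|_\infty$; combining this with the stabilization above gives $S_a^{[N]}(z)=e_1^\top\big(zI_{N+1}-T^{[N]}\big)^{-1}e^\nu_a\to e_1^\top(zI-T)^{-1}e^\nu_a$ for such $z$. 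On the other hand, Proposition~\ref{pro:Weyl_functions} gives $S_a^{[N]}(z)=\int\frac{\d\psi_a^{[N]}(x)}{z-x}$, and since $x\mapsto(z-x)^{-1}$ is continuous on a fixed compact neighbourhood of all the supports, the Helly--Bray theorem yields $S_a^{[N]}(z)\to\int_\Delta\frac{\d\psi_a(x)}{z-x}$. Hence $e_1^\top(zI-T)^{-1}e^\nu_a=\int_\Delta\frac{\d\psi_a(x)}{z-x}$ for $|z|>\|T\|_\infty$; as the right-hand side is holomorphic on $\C\setminus\Delta$, the Weyl function $\hat\psi_a$ extends holomorphically there and the identity persists on all of $\C\setminus\Delta$ by analytic continuation.

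The routine ingredients are the path-counting stabilization and the two applications of the Helly--Bray theorem. The only point that needs genuine care is interchanging the limit in $N$ with the Neumann summation; this is exactly what the uniform bound $\|T^{[N]}\|_\infty\leqslant\|T\|_\infty$ secures, the tails being dominated by a convergent geometric series once $|z|>\|T\|_\infty$. I do not expect a serious obstacle: the statement is essentially the semi-infinite shadow of Propositions~\ref{pro:eq:moments_discrete} and~\ref{pro:Weyl_functions}.
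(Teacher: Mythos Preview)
Your proposal is correct and follows essentially the same route as the paper: invoke Propositions~\ref{pro:eq:moments_discrete} and~\ref{pro:Weyl_functions} for the truncations and pass to the limit via Helly's second theorem. The paper's own proof is a one-line sketch citing exactly these ingredients; you have simply supplied the details (the stabilization $e_1^\top\big(T^{[N]}\big)^n e^\nu_a = e_1^\top T^n e^\nu_a$ for large $N$, the uniform control of the Neumann tails via $\|T^{[N]}\|_\infty\leqslant\|T\|_\infty$, and the analytic continuation to $\C\setminus\Delta$) that the paper leaves implicit.
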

\begin{proof}
Propositions \ref{pro:eq:moments_discrete} and \ref{pro:Weyl_functions} and	Helly's second theorem leads to the spectral representation for the moments and Stieltjes--Markov functions of $T$.
\end{proof}

\begin{rem}
	For the Weyl functions in Proposition \ref{pro:Weyl_functions} we have in $\bar\C\setminus \Delta$ uniform convergence to the Stieltjes--Markov functions 
	\begin{align*}
		S^{[N]}_a&\xrightrightarrows [N\to\infty]{}\hat \psi_a, & a&\in\{1,\dots,p\}.
	\end{align*}
\end{rem}

Multiple quadrature formulas have been studied in \cite{Borges, Coussement-VanAssche,Ulises-Illan-Guillermo}. In particular, in \cite{Ulises-Illan-Guillermo} the convergence properties of simultaneous quadrature rules of a given function with respect to different weights is studied.

From our developments we can derive such multiple Gauss quadrature formulas easily. 
\begin{defi}
	The degrees of precision or orders $d_a(N)$, $a\in\{1,\dots,p\}$, are the largest natural numbers such that
	\begin{align*}
		e_1^\top T^n e^\nu_a &=e_1^\top \big(T^{[N]}\big)^n e^\nu_a, & 0&\leqslant n\leqslant d_a(N), & a&\in\{1,\dots,p\}.
	\end{align*}
\end{defi}
We us the ceiling $\lceil x\rceil$ function maps $x$ to the least integer greater than or equal to $x$.
\begin{pro}
In terms of the number of nodes $\mathcal N\coloneq N+1$, 	for $a\in\{1,\dots,p\}$, the degrees of precision~are
	\begin{align*}
	d_a&=\mathcal N-1+\upnu_a, &\upnu_a& \coloneq \left\lceil\frac{\mathcal N+1-a}{p}\right\rceil.
\end{align*}
\end{pro}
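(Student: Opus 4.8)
The plan is to turn the identity
$d_a(N)=\max\{D: e_1^\top T^n e^\nu_a=e_1^\top(T^{[N]})^n e^\nu_a\ \text{for all}\ 0\leqslant n\leqslant D\}$
— which is exactly the definition once one recalls, from Propositions \ref{pro:eq:moments_discrete} and \ref{pro:spectral_moments_Weyl}, that the two sides are $\langle\mu^{[N]}_a,x^n\rangle$ and $\int_\Delta x^n\d\psi_a$ — into a count of weighted lattice paths. Expanding matrix powers, $(T^n)_{1,j}=\sum T_{1,i_1}T_{i_1,i_2}\cdots T_{i_{n-1},j}$ over all index sequences, while $((T^{[N]})^n)_{1,j}$ is the same sum restricted to the $\mathcal N=N+1$ indices carried by $T^{[N]}$; since $e^\nu_a$ is supported on the first $a$ indices, the difference $e_1^\top\big(T^n-(T^{[N]})^n\big)e^\nu_a=\sum_{j=1}^{a}(e^\nu_a)_j\big((T^n)_{1,j}-((T^{[N]})^n)_{1,j}\big)$ is governed by those paths from the first index to one of the first $a$ indices that leave the first $\mathcal N$ indices.

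First I would record two positivity facts. From the positive bidiagonal factorization \eqref{eq:bidiagonal}, every band entry of $T$ (and of $T^{[N]}$) is strictly positive: $T_{i,j}$ with $i-p\leqslant j\leqslant i+1$ contains the positive summand in which $L_1,\dots,L_{i-j}$ contribute their sub-diagonal $\alpha$'s and $L_{i-j+1},\dots,L_p$ together with $U$ their diagonal entries. Hence every path weight is a product of positive numbers; in particular $(T^n)_{1,j}-((T^{[N]})^n)_{1,j}\geqslant 0$, and it is $>0$ exactly when some path from the first index to the $j$-th index of length $n$ leaves the first $\mathcal N$ indices. Secondly, under the standing hypothesis $\nu^{-\top}=\mathscr L\mathscr C$ of Theorem \ref{theorem:spectral_representation_bis}, the vector $e^\nu_a$ is the $a$-th column of the nonnegative upper unitriangular matrix $\mathscr L\mathscr C$, so $(e^\nu_a)_j\geqslant 0$ for all $j$ and $(e^\nu_a)_a=1$.

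Next comes the length estimate. A path starting at the first index that ever reaches an index outside the first $\mathcal N$ needs at least $\mathcal N$ factors to do so, because each factor $T_{i_{l-1},i_l}$ raises the index by at most $1$, and at the moment it first leaves it sits at the $(\mathcal N+1)$-th index; to end at the $j$-th index it must then decrease the index by at least $\mathcal N+1-j$, and each factor lowers it by at most $p$, so at least $\upnu_j\coloneq\lceil(\mathcal N+1-j)/p\rceil$ further factors are needed. Thus every such ``bad'' path to the $j$-th index has length at least $\mathcal N+\upnu_j$, and since $\upnu_j$ is nonincreasing in $j$ we get $\upnu_j\geqslant\upnu_a$ for $j\leqslant a$. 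Consequently, for $n\leqslant\mathcal N-1+\upnu_a$ there are no bad paths to any of the first $a$ indices, whence $e_1^\top T^n e^\nu_a=e_1^\top(T^{[N]})^n e^\nu_a$ and $d_a(N)\geqslant\mathcal N-1+\upnu_a$. For the reverse inequality take $n_0\coloneq\mathcal N+\upnu_a$ and exhibit one bad path to the $a$-th index of this length: ascend along the super-diagonal from the first index to the $(\mathcal N+1)$-th ($\mathcal N$ factors, each of weight $1$), then descend to the $a$-th index in $\upnu_a$ factors, taking $\upnu_a-1$ drops of $p$ and a final drop of $(\mathcal N+1-a)-(\upnu_a-1)p\in\{1,\dots,p\}$; its weight is a product of positive band entries, so $(T^{n_0})_{1,a}-((T^{[N]})^{n_0})_{1,a}>0$.

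The only delicate point is to rule out cancellation at $n=n_0$, because besides $j=a$ several columns $j<a$ with $\upnu_j=\upnu_a$ may contribute to $\sum_{j=1}^{a}(e^\nu_a)_j\big((T^{n_0})_{1,j}-((T^{[N]})^{n_0})_{1,j}\big)$. Here the two positivity facts finish the proof: every summand is nonnegative (the brackets by the first fact, the coefficients by the second), while the $j=a$ summand equals $(T^{n_0})_{1,a}-((T^{[N]})^{n_0})_{1,a}>0$ by the path just constructed; hence the whole sum is strictly positive and $d_a(N)=\mathcal N-1+\upnu_a$. I expect this no-cancellation step to be the main obstacle, since it is precisely where the positivity of the bidiagonal factorization and the condition $\nu^{-\top}=\mathscr L\mathscr C$ are genuinely used — the rest is bookkeeping with lattice paths. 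As a sanity check, $p=1$ gives $\upnu_1=\mathcal N$ and $d_1=2\mathcal N-1$, the classical Gaussian degree of precision.
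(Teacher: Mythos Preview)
Your argument is correct and follows essentially the same idea as the paper's proof --- expand $(T^n)_{0,j}$ as a sum over index paths, identify the minimal length of a path that escapes $T^{[N]}$ and returns, and invoke positivity of the band entries together with the nonnegativity of $e^\nu_a=\mathscr L\mathscr C e_a$ to rule out cancellation at the critical power. The paper establishes the same bound by working out the cases $p=1,2$ explicitly and then appealing to ``induction on $p$'' together with the remark that ``$e_a^\nu=\mathscr L\mathscr C e_a$ is a positive vector, so all the objects involved imply positive contributions''; your write-up makes the general length estimate $n\geqslant \mathcal N+\lceil(\mathcal N+1-j)/p\rceil$ and the no-cancellation step explicit, which is a cleaner execution of the same approach.
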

\begin{proof}
	For $p=1$ our matrix $T$ is tridiagonal, i.e. the nonzero entries of $T$ are of the form $T_{n,n+1}, T_{n,n}$ and $T_{n+1,n}$ and we are handling a Jacobi matrix, and we are asking for which entries of $T$ are involved in $(T^n)_{0,0}$. 
The degrees of precision we are seeking are determined by the property that in $(T^n)_{0,0}$ only factors that are entries of 
	$T^{[N]}$ contribute. 	For $N=1$, we have the following nonzero possibilities $T_{0,0} $, ($n=1$), $T_{0,1}T_{1,0}$ ($n=2$) and $T_{0,1}T_{1,1}T_{1,0}$ ($n=3$), and we stop here as for $n=4$ you have a nonzero term of the type $T_{0,1}T_{1,2}T_{2,1}T_{1,0}$ that involves entries from the $N=2$ truncation, and $d(1)=3$.
	For $N=2$, you have the previous term coming for $n=4$ but also for $n=5$ no entries from the truncation $N=3$ appear in $(T^5)_{0,0}$. However,
	for $n=6$ you have nonzero terms of the form $T_{0,1}T_{1,2}T_{2,3}T_{3,2}T_{2,1}$, involving elements from $T^{[3]}$,
	thus $d(2)=5$.
	For $N=3$, we have that up to $n=7$ no nonzero terms from $T^{[4]}$ appear, but do appear for $n=8$, consequently $d(3)=7$. By induction, we can show that
	$d(N)=2N+1$. 
	
	For $p=2$ the matrix $T$ is tetradiagonal with nonzero entries $T_{n,n+1}, T_{n,n}$, $T_{n+1,n}$ and $T_{n+2,n}$. 
	First we discuss $(T^n)_{0,0}$. 	For $N=1$ we find that $n=2$. When $N=2$ we have the powers up to $n=4$ are fine, however for $n=5$ the nonzero term
	$T_{0,1}T_{1,2}T_{2,3}T_{3,1}T_{1,0}$ tell us that $d_1(2)=4$. For $N=3$ the power $n=5$ is okay, but for $n=6$ we have nonzero terms such as
	$T_{0,1}T_{1,2}T_{2,3}T_{3,4}T_{4,2}T_{2,0}$, involving $T^{[4]}$, so that $d_1(3)=5$. For $N=4$, we can consider powers up to $n=7$, as for $n=8$ we have nonzero terms like 
	$T_{0,1}T_{1,2}T_{2,3}T_{3,4}T_{4,5}T_{5,3}T_{3,1}T_{1,0}$ and $d_1(4)=7$. The truncation $N=5$ admits up to $n=8$ as for $n=9$ we have nonzero terms such as
	$T_{0,1}T_{1,2}T_{2,3}T_{3,4}T_{4,5}T_{5,6}T_{6,4}T_{4,2}T_{2,0}$., then $d_1(5)=8$. For $N=6$ we can go up to $n=10$ as indicates the nonzero term
$T_{0,1}T_{1,2}T_{2,3}T_{3,4}T_{4,5}T_{5,6}T_{6,7}T_{7,5}T_{5,3}T_{3,1}T_{1,0}$. All these numbers can be gathered together by the formula
\begin{align*}
	d_1(N)=\left\lceil\frac{3N+1}{2}\right\rceil.
\end{align*}
Similar arguments for $(T^n)_{0,1}$ leads to the numbers $\left\lceil\frac{3N}{2}\right\rceil$. But $e_1^\top T^ne^\nu_1=(T^n)_{0,0}$ and
$e_1^\top T^ne^\nu_2=(T^n)_{0,2}-\nu^{(1)}_1(T^n)_{0,1}$. Consequently, the degrees of precision is the smaller of $\left\lceil\frac{3N+1}{2}\right\rceil$ and $\left\lceil\frac{3N}{2}\right\rceil$, so that
\begin{align*}
	d_2 (N)=\left\lceil\frac{3N}{2}\right\rceil.
\end{align*}

Induction on $p$ leads to 
\begin{align*}
	d_1(N)&=\left\lceil\frac{(p+1)N+1}{p}\right\rceil, &
	d_2(N)&=\left\lceil\frac{(p+1)N}{p}\right\rceil, &&\dots,
	& d_p(N)&=\left\lceil\frac{(p+1)N+2-p}{p}\right\rceil.
\end{align*}
Using that the number of nodes is $\mathcal N=N+1$, we find for $a\in\{1,\dots,p\}$
	\begin{align*}
	d_a=\left\lceil\frac{(p+1)(\mathcal N-1)+2-a}{p}\right\rceil=\mathcal N-1+\left\lceil\frac{\mathcal N+1-a}{p}\right\rceil, 
\end{align*}
and the result follows immediately.

\end{proof}

\begin{teo}[Multiple Gaussian quadrature formulas]
The following Gauss quadrature formulas hold
\begin{align}\label{eq:multiple_Gauss_quadrature}
	\int_\Delta x^n\d\psi_a(x)&=\sum_{k=1}^{N+1}\mu_{k,a}^{[N]}\big( \lambda_k^{[N]}\big)^n, & 0&\leqslant n\leqslant d_a(N),
	& a&\in\{1,\dots,p\}.
\end{align}
Here the degrees of precision $d_a$ are optimal (for any power largest than $n$ a positive remainder appears, an exactness is lost). 
\end{teo}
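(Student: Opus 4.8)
The statement to prove is the multiple Gaussian quadrature formula \eqref{eq:multiple_Gauss_quadrature} together with the optimality of the degrees of precision $d_a(N)$. The plan is to reduce everything to facts already established. First I would recall from Proposition~\ref{pro:eq:moments_discrete} that for every $N$ and every $a\in\{1,\dots,p\}$ one has the exact identity
\begin{align*}
	\sum_{k=1}^{N+1}\mu_{k,a}^{[N]}\big(\lambda_k^{[N]}\big)^n = e_1^\top\big(T^{[N]}\big)^n e^\nu_a, \qquad n\in\N_0.
\end{align*}
On the other hand, Proposition~\ref{pro:spectral_moments_Weyl} gives $\int_\Delta x^n\,\d\psi_a(x)=e_1^\top T^n e^\nu_a$. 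Therefore the quadrature formula \eqref{eq:multiple_Gauss_quadrature} holds for a given $n$ precisely when $e_1^\top T^n e^\nu_a = e_1^\top\big(T^{[N]}\big)^n e^\nu_a$, which is exactly the defining property of the degree of precision $d_a(N)$. Thus the content of the theorem is: the largest $n$ for which this matching holds is $d_a(N)$, and this is supplied verbatim by the preceding Proposition computing $d_a$ in closed form.

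The second assertion, optimality, I would address by the combinatorial argument already sketched in the proof of that Proposition: for $n=d_a(N)+1$ there is at least one nonzero walk contribution to $e_1^\top T^n e^\nu_a$ — a path in the banded graph of $T$ that reaches a node with index strictly larger than $N$ (of the explicit form $T_{0,1}T_{1,2}\cdots T_{N,N+1}T_{N+1,N-1}\cdots$ for $a=1$, and the analogous shifted walks for general $a$) — which is absent in $e_1^\top\big(T^{[N]}\big)^n e^\nu_a$. Because $T$ admits a positive bidiagonal factorization, all the entries $T_{i,j}$ appearing in such a walk are positive (the subdiagonal entries are sums of products of the $\alpha$'s, hence $>0$), so this extra term is strictly positive and cannot cancel against the truncated sum; hence the remainder $\int_\Delta x^n\,\d\psi_a - \sum_k \mu_{k,a}^{[N]}(\lambda_k^{[N]})^n$ is strictly positive for $n=d_a(N)+1$. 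This is where the positive bidiagonal factorization hypothesis (carried over from Theorem~\ref{theorem:spectral_representation_bis}, which guarantees the measures $\d\psi_a$ exist and are positive) is essential: without positivity one could not rule out cancellation, and the "exactness is lost with a positive remainder" phrasing would fail.

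The main obstacle — and it is a bookkeeping obstacle rather than a conceptual one — is to verify carefully that the \emph{first} walk of length $d_a(N)+1$ that escapes the truncation window really does contribute a nonzero term to $e_1^\top T^n e^\nu_a$ and not merely to some $(T^n)_{0,j}$ with $j$ outside the relevant range; this requires tracking how the change of basis $e^\nu_a$ (with $e_1^\nu=e_1$, $e_2^\nu=e_2-\nu_1^{(1)}e_1$, etc.) redistributes the column index, exactly as in the passage $e_1^\top T^n e^\nu_2=(T^n)_{0,2}-\nu_1^{(1)}(T^n)_{0,1}$ for $p=2$. One must check that the $\lceil(\mathcal N+1-a)/p\rceil$ formula correctly accounts for the interplay between the $p$ subdiagonals of $T$, the single superdiagonal, and the shift by $a$; since the preceding Proposition already carries out this induction on $p$ in detail, I would simply invoke it. Combining the two halves, \eqref{eq:multiple_Gauss_quadrature} holds for $0\leqslant n\leqslant d_a(N)$ and fails with positive remainder at $n=d_a(N)+1$, which is the claim.
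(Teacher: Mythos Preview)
Your proposal is correct and follows essentially the same route as the paper: both arguments combine Proposition~\ref{pro:eq:moments_discrete} and Proposition~\ref{pro:spectral_moments_Weyl} to rewrite each side of \eqref{eq:multiple_Gauss_quadrature} as $e_1^\top\big(T^{[N]}\big)^n e^\nu_a$ and $e_1^\top T^n e^\nu_a$ respectively, then invoke the definition of $d_a(N)$ and the preceding Proposition for its explicit value. For the optimality claim the paper phrases the positivity slightly differently---it notes that $T^n$ has nonnegative entries and that $e^\nu_a=\mathscr L\mathscr C e_a$ is an entrywise nonnegative vector (this last point is what guarantees no cancellation for $a\geqslant 2$, and you should make it explicit rather than leaving it implicit in your reference to Theorem~\ref{theorem:spectral_representation_bis})---but your walk-based justification is equivalent.
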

\begin{proof}
On the one hand, from Proposition \ref{pro:spectral_moments_Weyl} we have that $ e_1^\top T^n e^\nu_a=\int_\Delta x^n\d\psi_a(x)$. On the other hand, from Proposition \ref{pro:eq:moments_discrete}, we know that
$e_1^\top \big(T^{[N]}\big)^ne^\nu_a=\sum_{k=1}^{N+1}\mu_{k,a}^{[N]}\big( \lambda_k^{[N]}\big)^n$. Hence, as we have
\begin{align*}
e_1^\top T^n e^\nu_a &=e_1^\top \big(T^{[N]}\big)^n e^\nu_a, & 0&\leqslant n\leqslant d_a(N), & a&\in\{1,\dots,p\},
\end{align*}
we get \eqref{eq:multiple_Gauss_quadrature}. Notice that for $n>d_a(N)$ a positive remainder will appear and exactness will be lost. 
Observe $T^n$ is oscillatory and that $e_a^\nu=\mathscr L\mathscr C e_a$ is a positive vector, so all the objects involved imply positive contributions.

%\begin{align*}
%\int_\Delta t^n\d\psi_a(t)&=\sum_{k=1}^{N+1}\mu_{k,a}^{[N]}\big( \lambda_k^{[N]}\big)^n, & a&\in\{1,\dots,p\},
%\end{align*}
%with degree of precision $d_a(N)$.
\end{proof}

\begin{rem}
	As $d_a\geqslant \mathcal N-1$ the quadrature is interpolating \cite{Coussement-VanAssche}.
\end{rem}

\begin{rem}
	For $p=1$, the simple or non-multiple situation, we get the well-known Gauss quadrature that holds for $0\leqslant n\leqslant 2\mathcal N-1$, see \cite{Chihara, Ismail}.
	\end{rem}
\begin{rem}
A remarkable achievement here is to determine the degree of precision and not just lower bounds for them. 
The determination of the numbers $\upnu_a$, $a\in\{1,\dots,p\}$ is missing in the works \cite{Coussement-VanAssche,Ulises-Illan-Guillermo} .
\end{rem}

%We observe that from $T^nB=x^n B$ we get 
%\begin{align*}
%	x^n = (T^n)_{0,0}+(T^n)_{0,1} B_1(x)+\cdots+ (T^n)_{0,n}B_n(x),
%\end{align*}
%so that
%\begin{align*}
%	\int_{\Delta} x^n \d\psi_a(x)&=(T^n)_{0,0}	\int_{\Delta} \d\psi_a(x)+(T^n)_{0,1} \int_{\Delta} B_1(x)\d\psi_a(x)+\cdots+ (T^n)_{0,n}\int_{\Delta} B_n(x)\d\psi_a(x), 
%\end{align*}
%for $a\in\{1,\dots,p\}$. Recalling the multiple orthogonality relations of type II in Corollary \ref{cor:mor} we get
%\begin{align*}
%	\int_{\Delta} x^n \d\psi_1(x)&=(T^n)_{0,0}	\int_{\Delta} \d\psi_1(x), \\
%		\int_{\Delta} x^n \d\psi_1(x)&=(T^n)_{0,0}	\int_{\Delta} \d\psi_2(x)+(T^n)_{0,1} \int_{\Delta} B_1(x)\d\psi_2(x), \\
%		&\hspace*{5pt}\vdots\\
%			\int_{\Delta} x^n \d\psi_p(x)&=(T^n)_{0,0}	\int_{\Delta} \d\psi_p(x)+(T^n)_{0,1} \int_{\Delta} B_1(x)\d\psi_p(x)+\cdots+ (T^n)_{0,p-1}\int_{\Delta} B_{p-1}(x)\d\psi_p(x),
%\end{align*}

\section{Applications to Markov chains}

%We will discuss on the applications of the spectral representation we have given to bounded banded lower Hessenberg matrices that admit a positive bidiagonal factorization to 

We apply previous results to banded stochastic matrices 
 of the type II
\begin{align}\label{eq:PII}
\hspace{-.285cm} P =
\begin{bNiceMatrix}[columns-width =auto]
 P_{0,0} & P_{0,1}& 0 & \Cdots& &&&\phantom{X}\\
 P_{1,0} & P_{1,1} & P_{1,2}&\Ddots& &&&\phantom{X}\\
 \Vdots & & \Ddots& \Ddots&&&&\phantom{X}\\
 & & & &&&&\phantom{X}\ \\
 P_{p,0}& \Cdots& && P_{p,p}& P_{p,p+1} &&\phantom{X}\\%[10pt]
 0&P_{p+1,1}&\Cdots&&&P_{p+1,p+1}&P_{p+1,p+2}&\phantom{X}\\
 \Vdots & \Ddots& \Ddots&&&\phantom{X}&\Ddots&\Ddots&\phantom{X}\\
 \phantom{X} &\phantom{X} &\phantom{X}&\phantom{X}&\phantom{X}& \phantom{X}& \phantom{X}&\phantom{X}
% \CodeAfter\line{6-2}{8-4} 
% \line{6-1}{8-3}
% \line{1-3}{6-8}
% \line{2-3}{5-6}\line{2-2}{5-5}
% \line{6-6}{8-8}
% \line{6-7}{7-8}
 %\SubMatrix[{1-1}{7-8}]
\end{bNiceMatrix}
\end{align}
and of the type I
\begin{align}\label{eq:PI}
 P&=
\begin{bNiceMatrix}[columns-width =auto]
 P_{0,0} & P_{0,1}& \Cdots& & P_{0,p} &0&\Cdots&\phantom{X}\\
 P_{1,0} & P_{1,1} & && \Vdots&P_{1,p+1}&\Ddots&\phantom{X}\\
 0& P_{2,1}&\Ddots & &&\Vdots&\Ddots&\phantom{X}\\
 \Vdots& \Ddots& \Ddots& &&&&\phantom{X} \\
 & & &&P _{p,p}& &&\phantom{X}\\%[10pt]
 &&&&P_{p+1,p}&P_{p+1,p+1}&&\phantom{X}\\
 & & &&&P_{p+1,p+2}&\Ddots&\phantom{X}&\phantom{X}&\phantom{X}\\
&\phantom{X} &\phantom{X}&\phantom{X}&\phantom{X}& \phantom{X}& \Ddots&\phantom{X}
 \\
 \phantom{X} &\phantom{X} &\phantom{X}&\phantom{X}&\phantom{X}& \phantom{X}& \phantom{X}&\phantom{X}
% \CodeAfter
% \line{2-2}{5-5} \line{3-2}{6-5} 
% \line{3-1}{9-6}
%% \line{1-3}{6-9}
% \line{1-6}{3-8}
% \line{2-6}{4-8}
% \line{6-6}{8-8}
% \line{7-6}{9-8}
% \line{1-6}{1-8}
% \SubMatrix[{1-1}{8-8}]
\end{bNiceMatrix}.%\hspace*{-3.95cm}.
\end{align}
Here all the entries in each row are nonnegative and satisfy that its sum is $1$. 

In order to apply our previous analysis we need to understand the previous banded Hessenberg stochastic matrices in terms of a monic Hessenberg matrix $T$ or its transpose $T^\top$.

These stochastic matrices of Hessenberg type are connected to the monic banded Hessenberg matrix $T$ given in \eqref{eq:monic_Hessenberg}.
In fact, assuming that $P_{n,n+1}>0$, we seek for $\uppi_n$ such that
$	\uppi_n^{-1} P_{n,n+1}=\uppi_{n+1}^{-1}$ 
so that 
\begin{align*}
	\uppi_{n}&=\frac{1}{P_{0,1}P_{1,2}\cdots P_{n-1,n}},
\end{align*}
and we find
\begin{align}\label{eq:stochasticII}
	P&=\uppi T \uppi^{-1}, & \uppi&=\diag(1,\uppi_{1},\uppi_{2},\dots).
\end{align}
Similarly, assuming that $P_{n+1,n}>0$, for $\uppi_{n}=\frac{1}{P_{1,0}P_{2,1}\cdots P_{n,n-1}}$, then
\begin{align}\label{eq:stochasticI}
	P&= \uppi^{-1} T^\top \uppi , & \uppi &=\diag(1,\uppi_{1},\uppi_{2},\dots).
\end{align}

The stochastic banded Hessenberg matrix $P$ acts on the space of complex sequences $f=\{f(n)\}_{n=0}^\infty$ by means of $(Pf)(n)=\sum_{m=0}^\infty P_{n,m}f(m)$. As the matrix is stochastic, $\|P\|_\infty=1$, in where the norm is induced by the Banach space $\ell_\infty$ of bounded complex sequences. 
The set of the sequences $u_n=\{\delta_{n,m}\}_{m=0}^\infty$ is a Hilbert basis for the Hilbert space $\ell_2$.
The Banach space $\ell_\infty(\uppi)$ of sequences $\{f(n)\}_{n=0}^\infty$ such that the sequence 
$\{f(n)\uppi_n\}_{n=0}^\infty$ is bounded is of particular interest. The sequences 
 $u^\uppi_n=\frac{u_n}{\uppi_n}$ are normalized $\| u_n^\uppi\|_{\ell_\infty(\pi)}=1$ and allow to identify $T$ with $P$ as follows
\begin{align*}
	Tu^\uppi_n=\sum_{m=0}^\infty P_{m,n}u_m^\uppi.
\end{align*}
From $\|\sum_{n=0}^\infty x_nu_n^\pi\|_{\ell_\infty(\pi)}=\|\sum_{n=0}^\infty x_nu_n\|_{\infty}$ and 
\begin{align*}
	\|Tx\|_{\ell_\infty(\pi)}=		\left\|T\sum_{n=0}^\infty x_nu_n^\pi\right\|_{\ell_\infty(\pi)} =\left\|\sum_{n,m=0}^\infty x_n P_{m,n}u_m^\pi\right\|_{\ell_\infty(\pi)} =\left\|\sum_{n,m=0}^\infty x_n P_{m,n}u_m\right\|_\infty
%	\|Tx\|_{\ell_\infty(\pi)}\|_{\ell_\infty(\pi)}=\\|P x\|_\infty
\end{align*}we get
$\|T\|_{\ell_\infty(\pi)}=\|P\|_\infty=1$.
%An orthonormal Hilbert basis for the Hilbert space $\ell_2(\pi)$ of complex sequences $\{f(n)\}_{n=0}^\infty$ such that $\sum_{n=0}^\infty |f(n)|^2\pi_n<\infty$ is given by

Hence, the corresponding Markov chains can be described by the spectral methods we have constructed for monic Hessenberg semi-infinite matrices with positive bidiagonal factorization. We will see now that this is equivalent to the existence of a positive stochastic bidiagonal factorization. For that aim we introduce:

\begin{defi}[Positive stochastic bidiagonal factorization]\label{def:positive_stochastic_bidiagonal_factorization}
	A banded stochastic matrix $P_{II}$ of the form \eqref{eq:PII} has a positive stochastic bidiagonal factorization if we can write
	\begin{align*}
		P_{II}= \Pi_1 \cdots \Pi_p \Upsilon,
	\end{align*}
	with stochastic bidiagonal factors
	\begin{align*}
		\Pi_a&\coloneq \,\, \begin{bNiceMatrix}[columns-width=1cm]
			(\Pi_a)_{0,0} &0&\Cdots&&&\phantom{0}\\
			(\Pi_a)_{1,0} & (\Pi_k)_{1,1} &\Ddots&& &\\
			0& (\Pi_a)_{2,1} & (\Pi_a)_{2,2}& &&\\
			\Vdots&\Ddots& (\Pi_a)_{3,2} & (\Pi_a)_{3,3} & &\phantom{h}\\
			&& &\Ddots& \Ddots&\phantom{h}\\
			\phantom{h}&\phantom{h}&\phantom{h}&\phantom{h}&\phantom{h}&\phantom{h}&\phantom{h}
%			\CodeAfter
%			\line{3-1}{6-4} \line{4-3}{6-5}\line{4-4}{6-6} 
%			\line{1-2}{1-6}
%			\line{1-2}{5-6} 
		%	\SubMatrix[{1-1}{6-6}]
		\end{bNiceMatrix}, & a&\in\{1,\dots,p\},\\
		\Upsilon& \coloneq
		\left[\begin{NiceMatrix}[columns-width = .9cm,]
			\Upsilon_{0,0} & \Upsilon_{0,1} &0&\Cdots&\\
			0& \Upsilon_{1,1} & \Upsilon_{1,2} &\Ddots&\\
			\Vdots&\Ddots& \Upsilon_{2,2}&\Ddots&\\
			& & \Ddots &\Ddots &
		\end{NiceMatrix}\right],
	\end{align*}
	i.e., with $(\Pi_a)_{0,0}=1$, $(\Pi_a)_{k,k-1}+(\Pi_a)_{k,k}=1$, $(\Pi_a)_{k,k-1}> 0$, $(\Pi_a)_{k,k}> 0$, $\Upsilon_{k,k}+\Upsilon_{k,k+1}=1$, $\Upsilon_{k,k}> 0$, and $\Upsilon_{k,k+1}> 0$.
	
	A similar definition holds for finite matrices.
\end{defi}
Then, we have an identification between positive bidiagonal factorizations of monic banded Hessenberg matrices and positive stochastic bidiagonal factorizations of stochastic banded Hessenberg matrices. We discuss only the type $II$, as type $I$ is obtained by simple transposition.

\begin{pro}
A finite matrix with a positive stochastic bidiagonal factorization is oscillatory.
\end{pro}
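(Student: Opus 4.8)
The plan is to mimic, in the finite stochastic setting, the argument already used for the truncations $T^{[N]}$ (the proposition stating that truncations of a matrix with a positive bidiagonal factorization are oscillatory): I would exhibit $P_{II}$ as a product of matrices in $\operatorname{InTN}$, deduce that $P_{II}$ itself lies in $\operatorname{InTN}$, check that its first subdiagonal and first superdiagonal are strictly positive, and then conclude by the Gantmacher--Krein Criterion, Theorem~\ref{teo:Gantmacher--Krein Criterion}. Only the strict positivity of the entries of the bidiagonal factors will be used, not the sum-to-one constraints.

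First I would note that each of the finite bidiagonal factors $\Pi_1,\dots,\Pi_p,\Upsilon$ belongs to $\operatorname{InTN}$: a lower (resp.\ upper) bidiagonal matrix with nonnegative entries is totally nonnegative, and since by definition all diagonal entries of $\Pi_a$ and of $\Upsilon$ are positive (with $(\Pi_a)_{0,0}=1$), each factor is nonsingular. By closure of $\operatorname{InTN}$ under products (Cauchy--Binet, \cite[Theorem 1.1.2]{Fallat-Johnson}), the product $P_{II}=\Pi_1\cdots\Pi_p\,\Upsilon$ is nonsingular and totally nonnegative; a bandwidth count (the product of the $p$ lower bidiagonal factors is lower triangular with $p$ subdiagonals, and right multiplication by the upper bidiagonal $\Upsilon$ creates at most one superdiagonal) confirms moreover that $P_{II}$ has the banded Hessenberg shape~\eqref{eq:PII}.

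Next I would compute the two relevant diagonals. Put $M\coloneq\Pi_1\cdots\Pi_p$, a lower triangular matrix with diagonal entries $M_{n,n}=\prod_{a=1}^p(\Pi_a)_{n,n}>0$. Since $\Upsilon$ is upper bidiagonal and $M$ lower triangular, $(P_{II})_{n,n+1}=M_{n,n}\,\Upsilon_{n,n+1}>0$, so the first superdiagonal is positive. For the first subdiagonal, $(P_{II})_{n+1,n}=M_{n+1,n}\Upsilon_{n,n}+M_{n+1,n-1}\Upsilon_{n-1,n}\geqslant M_{n+1,n}\Upsilon_{n,n}$, so it suffices to see $M_{n+1,n}>0$. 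Expanding $M=\Pi_1\cdots\Pi_p$ as a sum over index paths, the single term that stays on the diagonal through $\Pi_1,\dots,\Pi_{a-1}$, drops one step in $\Pi_a$, and stays on the diagonal through $\Pi_{a+1},\dots,\Pi_p$ equals $(\Pi_1)_{n+1,n+1}\cdots(\Pi_{a-1})_{n+1,n+1}(\Pi_a)_{n+1,n}(\Pi_{a+1})_{n,n}\cdots(\Pi_p)_{n,n}>0$, while every other term is nonnegative; hence $M_{n+1,n}>0$ and the first subdiagonal of $P_{II}$ is positive.

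Having established that $P_{II}$ is nonsingular, totally nonnegative, with positive first subdiagonal and first superdiagonal, the Gantmacher--Krein Criterion, Theorem~\ref{teo:Gantmacher--Krein Criterion}, yields that $P_{II}$ is oscillatory; the type~I case follows at once by transposition, which preserves total nonnegativity, nonsingularity, and the positions of the first sub/superdiagonal. The only step carrying any content is the positivity of the first subdiagonal of $M$, namely the elementary observation that a product of lower bidiagonal matrices with positive diagonal and positive subdiagonal has positive subdiagonal; everything else is bookkeeping and reuse of results already proved above.
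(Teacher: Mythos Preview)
Your proof is correct and follows exactly the same strategy as the paper's own argument: show each bidiagonal factor lies in $\operatorname{InTN}$, conclude the product does too, verify positivity of the first sub- and superdiagonals, and invoke the Gantmacher--Krein Criterion. The only difference is that you spell out in detail the positivity of the first subdiagonal of $M=\Pi_1\cdots\Pi_p$ via an index-path argument, whereas the paper simply asserts that ``the entries in the first superdiagonal and subdiagonal are all positive'' without elaboration.
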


\begin{proof}
	Each factor in the product belongs to InTN, hence the product belongs to InTN. The entries in the first superdiagonal and subdiagonal are all positive, and the Gantmacher--Krein Criterion gives the result.
\end{proof}
\begin{teo}
	Let us assume a type II banded stochastic matrix, $P$, is of the form \eqref{eq:PII}. Then, $P$ has a positive stochastic bidiagonal factorization, as in Definition \ref{def:positive_stochastic_bidiagonal_factorization},
	if and only if it is similar, via a positive diagonal matrix, to a monic Hessenberg matrix $T$ with positive bidiagonal factorization, as in \eqref{eq:bidiagonal}, consequently, 
	%using that itholds \eqref{eq:stochasticII} then
\begin{align*}
		P&= \Pi_1 \cdots \Pi_p \Upsilon, & & \xLeftrightarrow[]{\phantom{holoholad}}& & T=L_1\cdots L_p U,
	\end{align*}
with $L_1,\dots, L_p$ positive lower bidiagonal matrices and $U$ an upper positive bidiagonal matrix, as in \eqref{eq:bidiagonal_factors}. 
\end{teo}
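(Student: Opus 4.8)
The plan is to prove both implications by explicit diagonal rescalings, using the all‑ones semi‑infinite vector $\mathbf 1=(1,1,\dots)^\top$ together with the relation $P\mathbf 1=\mathbf 1$ as the device that forces the one‑sided rescalings to close up without a leftover diagonal factor. The one combinatorial ingredient, used twice, is the \emph{band‑expansion identity}: if a matrix is written as a product $M_1\cdots M_p M_{p+1}$ of $p$ lower bidiagonal factors $M_1,\dots,M_p$ and one upper bidiagonal factor $M_{p+1}$, then its $(j,j+1)$ entry equals exactly $(M_1)_{j,j}\cdots(M_p)_{j,j}(M_{p+1})_{j,j+1}$, because in the sum over index paths the only step that can raise the column index is the single superdiagonal step available in $M_{p+1}$, while each step through an $M_a$ can only keep or lower it.

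\emph{From a positive stochastic bidiagonal factorization to a monic $T$.} Assume $P=\Pi_1\cdots\Pi_p\Upsilon$ as in Definition~\ref{def:positive_stochastic_bidiagonal_factorization}. The band‑expansion identity gives $P_{n,n+1}=(\Pi_1)_{n,n}\cdots(\Pi_p)_{n,n}\,\Upsilon_{n,n+1}>0$, so \eqref{eq:stochasticII} applies and $P=\uppi T\uppi^{-1}$ with $T$ monic banded Hessenberg and $\uppi$ a positive diagonal matrix. Conjugating the factorization gives $T=\hat L_1\cdots\hat L_p\hat U$ with $\hat L_a:=\uppi^{-1}\Pi_a\uppi$ lower bidiagonal (positive diagonal and subdiagonal) and $\hat U:=\uppi^{-1}\Upsilon\uppi$ upper bidiagonal (positive diagonal and superdiagonal). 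Put $D_0:=I$ and $(D_a)_j:=(D_{a-1})_j/(\hat L_a)_{j,j}$ for $a=1,\dots,p$ (all positive), and set $L_a:=D_{a-1}^{-1}\hat L_a D_a$, $U:=D_p^{-1}\hat U$. By construction each $L_a$ has $1$'s on its diagonal and a positive subdiagonal, and the product telescopes to $D_0^{-1}\hat L_1\cdots\hat L_p\hat U=T$. The single point needing the monic normalization is that $U$ has $1$'s on its superdiagonal: the band‑expansion identity applied to $T=\hat L_1\cdots\hat L_p\hat U$ gives $T_{j,j+1}=(\hat L_1)_{j,j}\cdots(\hat L_p)_{j,j}\hat U_{j,j+1}$, and since $T_{j,j+1}=1$ and $(D_p)_j=1/\big((\hat L_1)_{j,j}\cdots(\hat L_p)_{j,j}\big)$ we get $\hat U_{j,j+1}=(D_p)_j$, i.e. $U_{j,j+1}=1$. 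Thus $T=L_1\cdots L_pU$ is a positive bidiagonal factorization of the form \eqref{eq:bidiagonal_factors}.

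\emph{From a positive bidiagonal factorization of $T$ back to $P$.} Assume $P=DTD^{-1}$ with $D$ positive diagonal, $T$ monic banded Hessenberg, and $T=L_1\cdots L_pU$ a positive bidiagonal factorization. Conjugating, $P=\hat\Pi_1\cdots\hat\Pi_p\hat\Upsilon$ with $\hat\Pi_a:=DL_aD^{-1}$ lower bidiagonal (diagonal $\equiv1$, positive subdiagonal) and $\hat\Upsilon:=DUD^{-1}$ upper bidiagonal (positive diagonal and superdiagonal). Define $v_p:=\hat\Upsilon\mathbf 1$ and $v_{a-1}:=\hat\Pi_a v_a$ for $a=p,\dots,1$; each $v_a$ is an entrywise positive vector, and $v_0=\hat\Pi_1\cdots\hat\Pi_p\hat\Upsilon\mathbf 1=P\mathbf 1=\mathbf 1$ because $P$ is stochastic. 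With $E_a:=\diag(v_a)$ (so $E_0=I$), put $\Pi_a:=E_{a-1}^{-1}\hat\Pi_a E_a$ and $\Upsilon:=E_p^{-1}\hat\Upsilon$. The product telescopes to $\hat\Pi_1\cdots\hat\Pi_p\hat\Upsilon=P$; each factor stays bidiagonal with positive entries on its two relevant diagonals; one has $\Pi_a\mathbf 1=E_{a-1}^{-1}\hat\Pi_a v_a=E_{a-1}^{-1}v_{a-1}=\mathbf 1$ and $\Upsilon\mathbf 1=E_p^{-1}v_p=\mathbf 1$, so all factors are stochastic; and since row $0$ of the lower bidiagonal $\Pi_a$ has only the $(0,0)$ entry, $(\Pi_a)_{0,0}=1$. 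Hence $P=\Pi_1\cdots\Pi_p\Upsilon$ is a positive stochastic bidiagonal factorization as in Definition~\ref{def:positive_stochastic_bidiagonal_factorization}.

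I expect the band‑expansion identity to be the only step requiring genuine thought (it is short), while the rest is bookkeeping: checking that the semi‑infinite matrix products are well defined (each row has finitely many nonzero entries, and for bounded $T$ the vectors $v_a$ are bounded), that the positive numbers produced are exactly the parameters $\alpha_i$ of \eqref{eq:bidiagonal_factors} and the entries of $\Pi_a,\Upsilon$ after an obvious relabelling, and the finite‑matrix case, which is identical with all sums finite. The type I matrices \eqref{eq:PI} follow by transposition via \eqref{eq:stochasticI}, as noted.
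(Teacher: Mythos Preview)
Your proof is correct and follows essentially the same approach as the paper: both directions are handled by inserting positive diagonal rescalings between consecutive bidiagonal factors, with the stochastic identity $P\mathbf 1=\mathbf 1$ used to close up the telescoping product without a leftover diagonal. Your presentation is somewhat cleaner---organizing the second direction via the vectors $v_a$ and $E_a=\diag(v_a)$ rather than the paper's entrywise formulas for the $D_j$'s---but the underlying mechanism is identical.
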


%	Here,
%	\begin{align} \label{eq:HII}
%		\uppi_{II} &%= \diag (1,\uppi_{II,1} , \uppi_{II, 2} \cdots) 
%	=\diag\left(1,\frac{1}{P_{0,1}} , \frac{1}{P_{0,1}P_{1,2} }, \cdots\right),\\\label{eq:rH}
%		P_{0,1}&=\Upsilon_{0,1}, & P_{i,i+1} &=(\Pi_1)_{i,i} \cdots (\Pi_p)_{i,i} \Upsilon_{i,i+1},&
%	\end{align}
%ssssss	
%	
%	
%and
%	\begin{align}\label{eq:LPi}
%		(L_a)_{i-1,i} &= \frac{\uppi_{II,i}}{\uppi_{II,i-1}} \frac{(\Pi_1)_{i-1,i-1}}{(\Pi_1)_{i,i}} \frac{(\Pi_2)_{i-1,i-1}}{(\Pi_2)_{i,i}} \cdots \frac{(\Pi_a)_{i-1,i-1}}{(\Pi_a)_{i,i}},& a&\in\{1,\dots,p\},\\
%		\label{eq:UPi}
%		U_{i,i} &=
%		\frac{\Upsilon_{i,i}}{\Upsilon_{i,i+1}} \frac{\uppi_{II,i}}{\uppi_{II,i+1}} .
%	\end{align}
%\end{teo}
%

\begin{proof}	
	%with $ \Pi^k_{i+1,i} >0, \Pi^k_{i,i}>0 , \, i=0, 1, \cdots $, and $\Upsilon_{i,i} >0, \Upsilon_{i,i+1} >0, i= 0,1, 2 \cdots$, then $L_1,\dots, L_p,U\in \operatorname{InTN}$.
Let us assume that $P$ is of the form \eqref{eq:PII} and has a positive stochastic bidiagonal factorization
$P= \Pi_1 \cdots \Pi_p \Upsilon$, then we can consider the associated monic Hessenberg matrix $T$, such that
$P= \uppi T \uppi^{-1}$. A simple computation shows that $\uppi_{0}=1$ and $\uppi_{In}=\frac{1}{\prod_{k=0}^{n-1} P_{k,k+1}}$, for $n\in\N$.
	To connect these diagonal entries of the similarity matrix $\uppi_{II}$ with the bidiagonal stochastic factors with first notice that
	$(\Pi_p \Upsilon)_{i,i+1} = (\Pi_p)_{i,i} \Upsilon_{i,i+1}$ and that $(\Pi_k)_{0,0} = 1$, for $k=1, \ldots, p$. Then, we use this in a recursive manner
	to get 
	\begin{align*}
	P_{0,1}&=\Upsilon_{0,1}, & P_{i,i+1} &=(\Pi_1)_{i,i} \cdots (\Pi_p)_{i,i} \Upsilon_{i,i+1}.
\end{align*}
	%so the elements
	%$(\Pi_1 \cdots \Pi_p \Gamma)_{i,i+1}$, for $i=0, 1, \dots $ are given by
	%$$
	%\Pi^1_{i,i} \cdots \Pi^p_{i,i} \gammT_{i,i+1}
	%$$
	%

The stochastic bidiagonal factors of the factorization of $P_{II}$ and the positive monic bidiagonal factors of the factorization of $T$ are connected through
	\begin{align*}
		\Pi_1 & =\uppi L_1 D_1^{-1}, & \Pi_2 &=D_1 L_2 D_2^{-1}, & &\ldots ,& \Pi_p &=D_{p-1}L_p D_p^{-1},& \Upsilon &= D_p U \uppi^{-1},
	\end{align*}
where $D_k = \diag( d_{k,0},d_{k,1}, \ldots)$. 
	Let us notice that
	$L_1 = \uppi_{II}^{-1} \Pi_1 D_1$
	so, by direct computation it holds that
	\begin{align*}
		(L_1)_{i,i} &= 1 &&\xLeftrightarrow[]{\phantom{holoholad}} &\frac{1}{\uppi_{i}} (\Pi_1)_{i,i} d_{1,i} &= 1,
	\end{align*}
	and we are lead firstly to $d_{1,i} = \frac{\uppi_i}{ (\Pi_1)_{i,i}} $ and secondly to
	\begin{align*}
		(L_1)_{i-1,i} = \frac{1}{\uppi_{i-1}} (\Pi_1)_{i,i} d_{1,i} 
		= \frac{\uppi_{i}}{\uppi_{i-1}} .
	\end{align*}
	For the next step, let us consider
	$L_2 = D_1^{-1} \Pi_2 D_2$, and, by direct computation, conclude that it holds that
	\begin{align*}
		(L_2)_{i,i} &= 1 &&\xLeftrightarrow[]{\phantom{holoholad}} & \frac{1}{d_{1,i}} (\Pi_2)_{i,i} d_{2,i} &= 1.
	\end{align*}
	Thus, we deduce that
	\begin{align*}
		d_{2,i} = \frac{d_{1,i}}{ (\Pi_2)_{i,i}} = \frac{\uppi_{i}}{ (\Pi_1)_{i,i} (\Pi_2)_{i,i}},
	\end{align*}
and, consequently, that
	\begin{align*}
		(L_2)_{i-1,i} = \frac{1}{d_{1,i-1}} (\Pi_2)_{i,i} d_{2,i} =
		\frac{\uppi_{II,i}}{\uppi_{II,i-1}} \frac{ (\Pi_1)_{i-1,i-1}}{(\Pi_1)_{i,i} }.
	\end{align*}
Hence, recursively, we are lead to
	\begin{align*}
		d_{a,i} &= \frac{\uppi_{i}}{ (\Pi_1)_{i,i} (\Pi_2)_{i,i} \cdots (\Pi_a)_{i,i}}, &a&\in\{1,\dots,p\},
	\end{align*}
	and
	\begin{align*}
		(L_a)_{i-1,i} &= \frac{\uppi_{i}}{\uppi_{i-1}} \frac{(\Pi_1)_{i-1,i-1}}{(\Pi_1)_{i,i}} \frac{(\Pi_2)_{i-1,i-1}}{(\Pi_2)_{i,i}} \cdots \frac{(\Pi_a)_{i-1,i-1}}{(\Pi_a)_{i,i}},& a&\in\{1,\dots,p\}.
		\end{align*}
Finally, from 
	$U = D_p^{-1} \Upsilon \uppi_{II}$ 
we get $
		U_{i,i+1} = \frac{1}{d_{p,i}} \Upsilon_{i,i+1}\uppi_{i+1} = 1$
and
	$U_{i,i} = \frac{1}{d_{p,i}} \Upsilon_{i,i}\uppi_{i}$, so that	
	$U_{i,i} =
		\frac{\Upsilon_{i,i}}{\Upsilon_{i,i+1}} \frac{\uppi_{i}}{\uppi_{i+1}} $.
	
		Now, let us assume that $P$ has a associated PBF monic Hessenberg matrix $T$, then we can prove that
the stochastic matrix $P$ has the following stochastic bidiagonal factorization
%\label{eq:stochastic_factorization_II}
$ P=\Pi_{1}\Pi_{2}\cdots \Pi_{p}\Upsilon$, in terms of the stochastic matrices 
 \begin{align*}
\Pi_1&\coloneq \pi L_1 D_{p}^{-1}, & 
\Pi_2 &\coloneq D_{p} L_2 D_{p-1}^{-1}, &&\dots, &
\Pi_p&\coloneq D_{2} L_p D_{1}^{-1} &
\Upsilon&\coloneq D _{1} U \pi^{-1} .
 \end{align*}
Given any semi-infinite vector $v=\left[\begin{NiceMatrix}[]v_0 &v_1&\Cdots
\end{NiceMatrix}\right]^\top$ we construct a corresponding diagonal matrix $\delta(v)=\diag(v_0,v_1,\dots)$.
We use the notation $\textbf{1}\coloneq\left[\begin{NiceMatrix}[]
	1 &
	1&
	\Cdots
\end{NiceMatrix} \right]^\top$.
 Then
$ D _{1} = \delta (U\uppi^{-1}\textbf{1})^{-1}$, $ D _{2} = \delta (L_p D_{1}^{-1}\textbf{1})^{-1}$,
 and recursively, for $j\in\{1, \dots, p-1\}$,
$ D _{j+1} = \delta( L_{p-j+1} D_{j}^{-1} \textbf{1})^{-1}$.
 Notice that the definition of each of the $D_{j}$ matrices are given in order to get that the corresponding matrices $\Upsilon, \Pi_{p}, \dots ,\Pi_2$ to be stochastic.
To have 
$	 P=\Pi_{1} \Pi_{2}\cdots \Pi_{p} \Upsilon$,
 we need the factor
$\Pi_{1}^L \coloneq \uppi L_1 D_{p}^{-1}$ to be stochastic, an this is the case. Indeed, by~construction this last factor is a nonnegative matrix. Moreover, being $P$ and the other factors stochastic we have $\textbf{1}=P\textbf{1}=\Pi_1 \Pi_{2}\cdots \Pi_{p} \Upsilon =\Pi_{1} \textbf{1}$. Hence, this last factor is a stochastic matrix.
%We can assert that this matrix is stochastic using that the product of stochastic matrices is a stochastic matrix.%and that the inverse of a stochastic matrix is again a stochastic matrix.
\end{proof}

\begin{rem}
	This stochastic factorization appeared in \cite{grunbaum_de la iglesia} in where an urn model was proposed for the Jacobi Markov chain and in \cite{Grunbaum_Iglesia} for the Jacobi--Piñeiro situation, see \cite{nuestro1}. In those papers a factorization $P_{II}=P_LP_U$ with $P_{U}$ being a stochastic upper triangular matrix with only the first superdiagonal nonzero, i.e. an stochastic matrix describing a pure birth Markov chain, and a matrix $P_L$ a stochastic lower triangular matrix, with zero as an absorbing state and only the two first subdiagonals nonzero.
	
	The stochastic factorization provided here for the oscillatory situation in $p$ simple stochastic oscillatory factors is in terms of a pure birth factor and $p$ pure death factors for the type II, and in terms of one pure death factor and $p$ pure birth factors for the type I case. The construction of the corresponding urn models, once the stochastic factorization is provided, will be given by an appropriate choice of $p+1$ urns, one urn per factor, with $p+1$ different experiments. 
\end{rem}

From hereon we will assume that the stochastic matrices $P_{II}$ or $P_I$ have a positive stochastic bidiagonal factorization, this will allow for the application of our spectral Favard theorem and to spectral formulas for significant probability quantities of the Markov chain involved.

\subsection{Finite Markov chains}

Let us assume two semi-infinite stochastic matrices $P_{II}$ or $P_I$ as in~\eqref{eq:stochasticII} and \eqref{eq:stochasticI}, respectively. Here $T$ is a bounded $(2p+2)$-diagonal matrix that we assume to be PBF. Hence, recalling that $ \lambda_1^{[N]}>0$ was the largest eigenvalue of $T^{[N]}$, 
the normalized truncated matrix $	\frac{1}{ \lambda_{1}^{[N]} }T^{[N]}$
has as its largest eigenvalue $1$, with corresponding entrywise positive right 
eigenvectors given by
\begin{align*}
	u_1^{\langle N\rangle}&=\begin{bNiceMatrix}
		1
		& u^{\langle N\rangle}_{1,2}
		& \Cdots 
		& u^{\langle N\rangle}_{1,N+1}
	\end{bNiceMatrix}^\top, & u^{\langle N\rangle}_{1,n}&=B_{n-1}( \lambda_1^{[N]}), 
\end{align*}
and left eigenvectors given by
\begin{align}
\label{eq:Omega1}	\Omega_1^{\langle N\rangle}&=\begin{bNiceMatrix}
		1&\Omega^{\langle N\rangle}_{1,2}&\Cdots&\Omega^{\langle N\rangle}_{1,N+1}
	\end{bNiceMatrix}, 
&
\begin{aligned}[t]\Omega_{1,n}^{\langle N\rangle}
		&=\frac{B^{[n]}_{N+1}( \lambda_{1}^{[N]})}{B^{[1]}_{N+1}( \lambda_{1}^{[N]})}\\ &=A^{(1)}_{n-1}( \lambda_{1}^{[N]})+A_{n-1}^{(2)}( \lambda_1^{[N]})\frac{\mu_{1,2}^{[N]}}{\mu_{1,1}^{[N]}}+\cdots+A_{n-1}^{(p)}( \lambda_{1}^{[N]})\frac{\mu_{1,p}^{[N]}}{\mu_{1,1}^{[N]}}.
	\end{aligned}
\end{align}

Let us take 
\begin{align*}
\tilde \uppi^{[N]}_{II,n}&=\frac{1}{B_n( \lambda_{1}^{[N]}) }, &
	\tilde \uppi_{I,n}^{[N]}&=\frac{B^{[n+1]}_{N+1}( \lambda_{1}^{[N]}) }{B^{[1]}_{N+1}( \lambda_{1}^{[N]}) }=A^{(1)}_{n}( \lambda_{1}^{[N]}) +A_{n}^{(2)}( \lambda_{1}^{[N]}) \frac{\mu_{1,2}^{[N]}}{\mu_{1,1}^{[N]}}+\cdots
		+A_{n}^{(p)}( \lambda_{1}^{[N]}) \frac{\mu_{1,p}^{[N]}}{\mu_{1,1}^{[N]}},
	\end{align*}
 and consider the matrices
\begin{align*}
	\tilde P_{II}^{[N]}&=\tilde \uppi_{II}^{[N]} \frac{T^{[N]}}{\lambda^{[N]}_{1}} (\tilde \uppi_{II}^{[N]})^{-1}, & \tilde \uppi_{II}^{{[N]}}&=\diag\big(1,\tilde \uppi^{{[N]}}_{II,1},\tilde \uppi^{{[N]}}_{II,2},\dots,\tilde \uppi^{{[N]}}_{II,N}\big),\\
\tilde	P_I^{[N]}&= (\tilde \uppi_{I}^{[N]})^{-1}\frac{(T^{[N]})^\top }{\lambda^{[N]}_{1}}\tilde \uppi_{I}^{[N]}, & \tilde \uppi_{I}^{{[N]}}&=\diag\big(1,\tilde \uppi^{{[N]}}_{I,1},\tilde \uppi^{{[N]}}_{II,2},\dots,\tilde \uppi^{{[N]}}_{I,N}\big).
\end{align*}
The stochastic property $\tilde P_{II}^{[N]} \textbf{1}^{[N]} = \textbf{1}^{[N]} $ follows from $T^{[N]} \tilde \uppi_{II}^{-1} \textbf{1}^{[N]} =\lambda^{[N]}_{1} \tilde \uppi_{II}^{-1} \textbf{1}^{[N]} $, recall that $\tilde \uppi_{II}^{-1} \textbf{1}^{[N]} =u_1$. 
Similarly, $\tilde P_{I}^{[N]} \textbf{1}^{[N]} = \textbf{1}^{[N]} $ follows from $\big(T^{[N]}\big)^\top \tilde \uppi_{I}\textbf{1}^{[N]} =\lambda^{[N]}_{1} \tilde \uppi_{I}\textbf{1}^{[N]} $, recall that $ \tilde \uppi_{I} \textbf{1}^{[N]}=\Omega_1^\top$.
Hence we get finite Markov chains, in where the $(p+2)$-diagonal stochastic matrices are given by
\begin{align*}
\tilde P_{II}^{[N]}&=
 \begin{bNiceMatrix}[columns-width =.8cm]
 P_{0,0} & P_{0,1}& 0 & \Cdots& &&&&&0\\
 P_{1,0} & P_{1,1} & P_{1,2}&\Ddots& &&&&&\Vdots\\
 \Vdots & & \Ddots & \Ddots&&&&&\phantom{X}\\
 & & & &&&&&\phantom{X} \\
 P_{p,0}& \Cdots& && P_{p,p}& P_{p,p+1} &&&\phantom{X}\\%[10pt]
 0&P_{p+1,1}&\Cdots&&&P_{p+1,p+1}&P_{p+1,p+2}&&\phantom{X}\\\\
 \Vdots &&\Ddots&\Ddots&&&&\Ddots&\Ddots&0\\
 & & &&&&&&&P_{N-1,N}\\
 0 &\Cdots&&&0&P_{N,N-p}&\Cdots& & &P_{N,N}
% \CodeAfter\line{6-2}{8-4} 
% \line{6-1}{8-3}
% \line{1-3}{6-8}
% \line{2-3}{5-6}\line{2-2}{5-5}
% \line{6-6}{8-8}
% \line{6-7}{7-8}
 % \SubMatrix[{1-1}{7-8}]
 \end{bNiceMatrix}, &
 \end{align*}
with $P_{0,1},P_{1,2}\cdots P_{N-1,N}>0$ and
\begin{align*}
 \tilde P_{I}^{[N]}&=
 \begin{bNiceMatrix}[columns-width =.7cm]
 P_{0,0} & P_{0,1} & \Cdots & & P_{0,p} & 0 & \Cdots && &0\\
 P_{1,0} & P_{1,1} & & & \Vdots & P_{1,p+1} & \Ddots && &\Vdots\\
 0 & P_{2,1} & \Ddots & & & \Vdots & \Ddots && &\\
 \Vdots& \Ddots & \Ddots & & & & & & &\\
 & & & & r _{p,p} & & && &0\\%[10pt]
 & & & & P_{p+1,p} &P_{p+1,p+1} & && &P_{N-p,p}\\
 & & & & &P_{p+1,p+2} & \Ddots && & \Vdots\\
 & & & & & & \Ddots && & \\
 & & & & & & && & \\
 0 & \Cdots & & & & & &0& P_{N,N-1} & P_{N,N}
% \CodeAfter
% \line{3-1}{10-8}
% \line{2-2}{5-5} \line{3-2}{6-5} 
% \line{3-1}{9-6}
% % \line{1-3}{6-9}
% \line{1-6}{3-8}
% \line{2-6}{4-8}
% \line{6-6}{8-8}
% \line{7-6}{9-8}
% \line{1-6}{1-8}
% \SubMatrix[{1-1}{8-8}]
 \end{bNiceMatrix},
 \end{align*}
with $P_{1,0},P_{2,1}\cdots P_{N,N-1}>0$, and
 \begin{align*}
\tilde \uppi^{[N]}_{II,n}&=\frac{1}{P_{0,1},P_{1,2}\cdots P_{n-1,n}},&
\tilde \uppi^{[N]}_{I,n}&=\frac{1}{P_{1,0},P_{2,1}\cdots P_{n,n-1}},\\
 P_{n,n+1}&=\frac{B_{n+1}(\lambda^{[N]}_{1})}{B_{n}(\lambda^{[N]}_{1})},&P_{n,n-1}&=\frac{B^{[n]}_{N+1}(\lambda^{[N]}_{1})}{B^{[n+1]}_{N+1}(\lambda^{[N]}_{1})}=\frac{
 	A^{(1)}_{n-1}(\lambda^{[N]}_{1})\mu_{1,1}^{[N]}+\cdots+A_{n-1}^{(p)}(
\lambda^{[N]}_{1})\mu_{1,2}^{[N]}
 }{A^{(1)}_{n}(\lambda^{[N]}_{1})\mu_{1,1}^{[N]}+\cdots+
 	A_{n}^{(p)}(\lambda^{[N]}_{1})\mu_{1,2}^{[N]}}.
 \end{align*}
\begin{rem}
	We stress that these two finite stochastic matrices $\tilde P^{[N]}_{II}$ and $\tilde P^{[N]}_I$ are not the leading submatrices $ P^{[N]}_{II}$ and $ P^{[N]}_I$ of $
	P_{II}$ and $P_I$, respectively. These truncations are not stochastic matrices, but semistochastic matrices. Also de matrices $\tilde \uppi^{[N]}_{II}$ and $\tilde \uppi^{[N]}_{I}$ are not the truncations $\uppi^{[N]}_{II}$ and $\uppi^{[N]}_I$.
\end{rem}
\begin{rem}
	 For $N>1$ we have $\| P^{[N]}_{II}\|_\infty=1$ %(and for $N>p$ we have $\| \tilde P_{II}\|_infty=1$) 
	 so that its eigenvalues belong to $(0,1]$ (being oscillatory its eigenvalues are positive). But $P^{[N]}_{II}$ is similar to $T^{[N]}$, and therefore for the corresponding eigenvalues we find $0<\lambda^{[N]}_k\leqslant 1$. Hence $[\xi,\eta]\subseteq[0,1]$.
\end{rem}
\begin{rem}\label{rem:1}
	As $\tilde P^{[N]}$ is stochastic $1$ is the largest eigenvalue, being the only eigenvalue with an eigenvector that can be chosen entrywise positive.
	Then, we have two finite rank operator sequences $P^{[N]}$ and $\tilde P^{[N]}$ that converge to $P$ in any Banach space $\ell_q$, $q\in \N$.
	The highest eigenvalue of the first sequence goes to $\eta\in(0,1]$ while for the second sequence is constant and equal to $1$. This strongly suggest that
	$\eta=1$.
\end{rem}

We introduce the notation
\begin{align*}
 \Theta^{[N]}_{II,k,l}&\coloneq \frac{\uppi^{[N]}_{II,k}}{\uppi^{[N]}_{II,l}}=\frac{B_l(\lambda^{[N]}_{1})}{B_k(\lambda^{[N]}_{1})}=\begin{cases}
 \dfrac{1}{P_{l,l+1}\cdots P_{k-1,k}},& l<k,\\
 1, & l=k,\\
 P_{k,k+1}\cdots P_{l-1,l},& l>k,
 \end{cases}\\
 \Theta^{[N]}_{I,l,k}&\coloneq \frac{\uppi^{[N]}_{I,l}}{\uppi^{[N]}_{I,k}}=\frac{B^{[l+1]}_{N+1}(\lambda^{[N]}_{1})}{B^{[k+1]}_{N+1}(\lambda^{[N]}_{1})}=\frac{
 A^{(1)}_{l}(\lambda^{[N]}_{1})\mu_{1,1}^{[N]}+\cdots+
 A_{l}^{(p)}(\lambda^{[N]}_{1})\mu_{1,p}^{[N]}
}{A^{(1)}_{k}(\lambda^{[N]}_{N+1})\mu_{1,1}^{[N]}+\cdots+
 A_{k}^{(p)}(\lambda^{[N]}_{N+1})\mu_{1,p}^{[N]}}=\begin{cases}
 P_{l+1,l}\cdots P_{k+1,k},& l<k,\\
1, & l=k,\\
\dfrac{1}{P_{k+1,k}\cdots P_{l,l-1}} ,& l>k.
 \end{cases}
\end{align*}

We now present three results regarding the spectral representation of important probability elements of the finite Markov chains involved.
We use the notation 
\begin{align*}
	x_N\coloneq \dfrac{x}{\lambda^{[N]}_{1}}. 
\end{align*}

\begin{pro}[Karlin--McGregor representation formula]
 The iterated probabilities have the following spectral representation
\begin{align*}
\left(\big(\tilde P_{II}^{[N]}\big)^n\right)_{k,l}&=\Theta^{[N]}_{II,k,l}
\left\langle A^{(1)}_{l}\mu_1^{[N]}+ \cdots+A^{(p)}_{l}\mu_p^{[N]}, x_N^nB_k\right\rangle
,\\
\left(\big(\tilde P_{I}^{[N]}\big)^n\right)_{k,l}&= \Theta^{[N]}_{I,l,k}
\left\langle A^{(1)}_{k}\mu_1^{[N]}+ \cdots+A^{(p)}_{k}\mu_p^{[N]} , x_N^nB_l\right\rangle.
\end{align*}
\end{pro}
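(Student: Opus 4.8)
The plan is to reduce the iterated-probability formula to the spectral decomposition $\big(T^{[N]}\big)^n=\mathscr U D^n\mathscr W$ already established, and then translate the matrix entries into the multiple-orthogonal-polynomial language via the explicit formulas for $\mathscr U$ and $\mathscr W$. First I would use the similarity $\tilde P^{[N]}_{II}=\tilde\uppi^{[N]}_{II}\,\tfrac{T^{[N]}}{\lambda^{[N]}_1}\,(\tilde\uppi^{[N]}_{II})^{-1}$, so that
\[
\Big(\big(\tilde P^{[N]}_{II}\big)^n\Big)_{k,l}=\tilde\uppi^{[N]}_{II,k}\,\frac{1}{\big(\lambda^{[N]}_1\big)^n}\,\Big(\big(T^{[N]}\big)^n\Big)_{k,l}\,\big(\tilde\uppi^{[N]}_{II,l}\big)^{-1}
=\Theta^{[N]}_{II,k,l}\,\frac{1}{\big(\lambda^{[N]}_1\big)^n}\,\Big(\big(T^{[N]}\big)^n\Big)_{k,l},
\]
where $\Theta^{[N]}_{II,k,l}=\tilde\uppi^{[N]}_{II,k}/\tilde\uppi^{[N]}_{II,l}=B_l(\lambda^{[N]}_1)/B_k(\lambda^{[N]}_1)$ is exactly the ratio introduced just before the statement. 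This isolates the combinatorial prefactor, and it remains to identify $\tfrac{1}{(\lambda^{[N]}_1)^n}\big((T^{[N]})^n\big)_{k,l}$ with the bracket $\langle A^{(1)}_l\mu^{[N]}_1+\cdots+A^{(p)}_l\mu^{[N]}_p,\,x_N^nB_k\rangle$, with $x_N=x/\lambda^{[N]}_1$.

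For that identification I would invoke \eqref{eq:UDnW=Jn}, namely $\big(T^{[N]}\big)^n=\mathscr U D^n\mathscr W$, and read off the $(k,l)$ entry (using $0$-based indexing as in the matrices $\mathscr U,\mathscr W$ of \eqref{eq:UW}):
\[
\Big(\big(T^{[N]}\big)^n\Big)_{k,l}=\sum_{j=1}^{N+1}B_k\big(\lambda^{[N]}_j\big)\big(\lambda^{[N]}_j\big)^n\,w^{\langle N\rangle}_{j,l+1}.
\]
Now the key point: by \eqref{eq:discrete_linear_form_typeI}, $w^{\langle N\rangle}_{j,l+1}=A^{(1)}_{l}(\lambda^{[N]}_j)\mu^{[N]}_{j,1}+\cdots+A^{(p)}_{l}(\lambda^{[N]}_j)\mu^{[N]}_{j,p}$, and by the definition \eqref{eq:discrete_mesures} of the discrete measures $\mu^{[N]}_a=\sum_{j}\mu^{[N]}_{j,a}\delta(x-\lambda^{[N]}_j)$, the sum over $j$ is precisely the pairing
\[
\sum_{j=1}^{N+1}B_k\big(\lambda^{[N]}_j\big)\big(\lambda^{[N]}_j\big)^n\Big(\textstyle\sum_{a=1}^p A^{(a)}_l(\lambda^{[N]}_j)\mu^{[N]}_{j,a}\Big)=\Big\langle A^{(1)}_{l}\mu^{[N]}_1+\cdots+A^{(p)}_{l}\mu^{[N]}_p,\,x^nB_k\Big\rangle.
\]
Dividing by $(\lambda^{[N]}_1)^n$ converts $x^n$ into $x_N^n$ and yields the stated formula for $\tilde P^{[N]}_{II}$. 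The type I case is handled by the same argument applied to $\tilde P^{[N]}_I=(\tilde\uppi^{[N]}_I)^{-1}\tfrac{(T^{[N]})^\top}{\lambda^{[N]}_1}\tilde\uppi^{[N]}_I$: transposition swaps the roles of left and right eigenvectors, so $\big((T^{[N]})^\top\big)^n$ has $(k,l)$ entry $\sum_j w^{\langle N\rangle}_{j,k+1}\,(\lambda^{[N]}_j)^n\,B_l(\lambda^{[N]}_j)$, giving $\langle A^{(1)}_k\mu^{[N]}_1+\cdots+A^{(p)}_k\mu^{[N]}_p,\,x_N^nB_l\rangle$, and the diagonal conjugation now produces the factor $\Theta^{[N]}_{I,l,k}=\tilde\uppi^{[N]}_{I,l}/\tilde\uppi^{[N]}_{I,k}$.

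The only genuine obstacle is bookkeeping rather than conceptual: one must be careful with the index shift between the $1$-based eigenvalue/eigenvector labels and the $0$-based polynomial degrees, and with the fact that the left eigenvectors appearing in \eqref{eq:UW} are the \emph{biorthogonally normalized} ones $w^{\langle N\rangle}_k$ (so that $\mathscr U\mathscr W=I$), which is exactly what makes $\mathscr U D^n\mathscr W=(T^{[N]})^n$ valid — this is precisely why the Christoffel numbers $\mu^{[N]}_{j,a}$ (and not some unnormalized variant) enter the pairing. Everything else is a direct substitution of the formulas from the Spectral Properties proposition and Theorem \ref{theorem:birothoganality}.
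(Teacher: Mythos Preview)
Your proof is correct and follows essentially the same route as the paper, which simply writes ``It follows from Theorem~\ref{theorem:birothoganality}.'' You have merely unpacked that one-line proof: the similarity $\tilde P^{[N]}=\tilde\uppi^{[N]}\tfrac{T^{[N]}}{\lambda^{[N]}_1}(\tilde\uppi^{[N]})^{-1}$ together with the spectral decomposition \eqref{eq:UDnW=Jn} and the identification \eqref{eq:discrete_linear_form_typeI} of the left-eigenvector entries with the discrete linear forms is exactly the content behind the paper's appeal to the biorthogonality theorem.
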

\begin{proof}
	It follows from Theorem \ref{theorem:birothoganality}.%and the consequent spectral representation of $\big( T^{[N]}\big)^n$.
\end{proof}

The generating functions of the probability $P_{ij}^n$ and first-passage-time probability $f_{ij}^n$ are given by
$P_{ij}(s)\coloneq \sum_{n=0}^\infty P_{ij}^ns^n$, $F_{ij}(s)=\sum_{n=1}^\infty f_{ij}^n s^n$. They 
are connected by
$P_{ij}(s)=F_{ij}(s)P_{jj}(s)$, for $ i\neq j$, and $P_{jj}(s)=1+F_{jj}(s)P_{jj}(s)$.
Hence, the generating functions of the first time passage distributions after~$n$ transitions are expressed in terms of the generating functions for the transition probability after~$n$ transitions.

Then, it immediately follows that:
\begin{coro}[Spectral representation of generating functions]
 For $|s|<1$, the corresponding transition probability generating functions are
\begin{align*}
(\tilde P_{II}^{[N]}(s))_{k,l}&= \Theta^{[N]}_{II,k,l}
\left\langle A^{(1)}_{l}\mu_1^{[N]}+\cdots+A^{(p)}_{l}\mu_p^{[N]}, \frac{B_k(x)}{1-sx_N}\right\rangle,\\
(\tilde P_{I}^{[N]}(s))_{k,l}&= \Theta^{[N]}_{I,l,k}
\left\langle A^{(1)}_{k}\mu_1^{[N]} + \cdots+A^{(p)}_{k}\mu_p^{[N]}, \frac{B_l(x)}{1-sx_N}\right\rangle.
\end{align*}
For $k\neq l$, the first passage generating functions are
\begin{align*}
(F_{II}^{[N]}(s))_{k,l}&= \Theta^{[N]}_{II,k,l}
\frac{\left\langle A^{(1)}_{l}\mu_1^{[N]}+\cdots+ A^{(p)}_{l}\mu_p^{[N]}, \frac{B_k(x)}{1-sx_N}\right\rangle}{\left\langle A^{(1)}_{l} \mu^{[N]}_1 +\cdots+ A^{(p)}_{l} \mu^{[N]}_p, \frac{B_l(x)}{1-sx_N}\right\rangle}
,\\
(F_{I}^{[N]}(s))_{k,l}&= \Theta^{[N]}_{I,l,k}\frac{\left\langle A^{(1)}_{k}\mu^{[N]}_1 + \cdots+A^{(p)}_{k}\mu^{[N]}_p , \frac{B_l(x)}{1-sx_N}\right\rangle}{\left\langle A^{(1)}_{l}\mu^{[N]}_1+ \cdots+A^{(p)}_{l} \mu^{[N]}_p, \frac{B_l(x)}{1-sx_N}\right\rangle}.
\end{align*}
For $k=l$ the first passage generating functions are the same for type I and II, namely
\begin{align*}
F_{ll}^{[N]}(s)=1-\dfrac{1}{\left\langle A^{(1)}_{l}\mu^{[N]}_1 + \cdots+A^{(p)}_{l}\mu^{[N]}_p , \dfrac{B_l(x)}{1-sx_N}\right\rangle}.
\end{align*}
\end{coro}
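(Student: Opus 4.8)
The plan is to combine the Karlin--McGregor representation formula of the preceding proposition with the two elementary relations between generating functions recalled just above, namely $P_{ij}(s)=F_{ij}(s)P_{jj}(s)$ for $i\neq j$ and $P_{jj}(s)=1+F_{jj}(s)P_{jj}(s)$.

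First I would produce the transition probability generating functions. Starting from the type II identity $\left(\big(\tilde P_{II}^{[N]}\big)^n\right)_{k,l}=\Theta^{[N]}_{II,k,l}\langle A^{(1)}_{l}\mu_1^{[N]}+\cdots+A^{(p)}_{l}\mu_p^{[N]}, x_N^nB_k\rangle$, I would multiply by $s^n$ and sum over $n\in\N_0$. Because the measures $\mu^{[N]}_a$ are supported on the finite set $\{\lambda^{[N]}_j\}_{j=1}^{N+1}$ and every eigenvalue satisfies $0<\lambda^{[N]}_j\leqslant\lambda^{[N]}_1$, one has $0<x_N=x/\lambda^{[N]}_1\leqslant1$ on that support, so for $|s|<1$ the geometric series sums to $1/(1-sx_N)$ uniformly there and the sum over $n$ may be exchanged with the (finite) pairing against $\mu^{[N]}_a$. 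This gives $(\tilde P_{II}^{[N]}(s))_{k,l}=\Theta^{[N]}_{II,k,l}\langle A^{(1)}_{l}\mu_1^{[N]}+\cdots+A^{(p)}_{l}\mu_p^{[N]}, B_k(x)/(1-sx_N)\rangle$, and the same argument applied to the type I Karlin--McGregor formula yields the companion expression with $\Theta^{[N]}_{I,l,k}$ and the roles of $k$ and $l$ interchanged in $A$ and $B$.

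Next I would deduce the first passage generating functions. For $k\neq l$, using $F_{kl}(s)=P_{kl}(s)/P_{ll}(s)$ — note $P_{ll}(s)$ is a convergent power series for $|s|<1$ since $P^n_{ll}\leqslant1$, and it is invertible because $P_{ll}(s)\big(1-F_{ll}(s)\big)=1$ — I substitute the transition generating functions just found; since $\Theta^{[N]}_{II,l,l}=\Theta^{[N]}_{I,l,l}=1$, the ratio of $\Theta$-factors collapses to $\Theta^{[N]}_{II,k,l}$ (resp.\ $\Theta^{[N]}_{I,l,k}$), which gives exactly the stated quotients. For $k=l$, the relation $P_{ll}(s)=1+F_{ll}(s)P_{ll}(s)$ rearranges to $F_{ll}(s)=1-1/P_{ll}(s)$; plugging in $P_{ll}(s)=\langle A^{(1)}_{l}\mu_1^{[N]}+\cdots+A^{(p)}_{l}\mu_p^{[N]}, B_l(x)/(1-sx_N)\rangle$ (with the trivial factor $\Theta^{[N]}_{\bullet,l,l}=1$) and observing that the $k=l$ brackets for types I and II are literally the same expression, one obtains the single type-independent formula for $F_{ll}^{[N]}(s)$.

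The argument is essentially bookkeeping once the Karlin--McGregor formula is in hand, so I do not expect a real obstacle; the only points deserving a word of care are the interchange of the summation over $n$ with the pairing against the measures — immediate here since the measures are finitely supported and $|sx_N|<1$ uniformly on their support — and the cancellation of the $\Theta$-factors in forming $P_{kl}/P_{ll}$, together with the check that the diagonal brackets coincide for types I and II, which is precisely what makes $F_{ll}^{[N]}$ independent of the type.
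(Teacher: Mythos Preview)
Your proposal is correct and matches the paper's approach: the paper simply states ``Then, it immediately follows that:'' before the corollary, relying on exactly the same ingredients you spell out --- summing the Karlin--McGregor representation against $s^n$ and invoking the standard relations $P_{ij}(s)=F_{ij}(s)P_{jj}(s)$ and $P_{jj}(s)=1+F_{jj}(s)P_{jj}(s)$. You have merely made explicit the routine bookkeeping (geometric-series summation on the finite support, the $\Theta_{l,l}=1$ cancellation) that the paper leaves implicit.
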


Let us show that the corresponding finite Markov chain is recurrent, for the concept of recurrence see \cite{Gallager,Karlin-McGregor,Karlin-Taylor1}.

%\textcolor{red}{Esta afirmacion nos parece falsa!!!}
\begin{teo}[Recurrent process]
 These finite Markov chains are recurrent.
\end{teo}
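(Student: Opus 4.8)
The plan is to use the Karlin--McGregor spectral representation of the first-passage generating functions together with the classical criterion that a state $l$ is recurrent if and only if $F_{ll}(1)=1$. First I would fix a state $l\in\{0,1,\dots,N\}$ and write, using the formula for $F_{ll}^{[N]}(s)$ derived just above,
\begin{align*}
F_{ll}^{[N]}(s)=1-\frac{1}{\left\langle A^{(1)}_{l}\mu^{[N]}_1 + \cdots+A^{(p)}_{l}\mu^{[N]}_p , \dfrac{B_l(x)}{1-sx_N}\right\rangle},
\end{align*}
so that recurrence of $l$ is equivalent to the divergence (or, in this finite setting, the appropriate limiting behaviour) forcing the bracket to blow up as $s\to 1^-$, i.e. $\lim_{s\to 1^-}\left\langle A^{(1)}_{l}\mu^{[N]}_1 + \cdots+A^{(p)}_{l}\mu^{[N]}_p , \frac{B_l(x)}{1-sx_N}\right\rangle=+\infty$. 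Since the measures $\mu^{[N]}_a$ are finitely supported at the eigenvalues $\lambda^{[N]}_1,\dots,\lambda^{[N]}_{N+1}$, this bracket is a finite sum $\sum_{k=1}^{N+1}\big(A^{(1)}_{l}(\lambda^{[N]}_k)\mu^{[N]}_{k,1}+\cdots+A^{(p)}_{l}(\lambda^{[N]}_k)\mu^{[N]}_{k,p}\big)\frac{B_l(\lambda^{[N]}_k)}{1-s\lambda^{[N]}_k/\lambda^{[N]}_1}$, and the only term that can diverge as $s\to 1^-$ is the one with $\lambda^{[N]}_k=\lambda^{[N]}_1$, i.e. $k=1$, where the denominator $1-s\to 0$.

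The second step is therefore to isolate the $k=1$ term and show its coefficient is strictly positive; then the bracket indeed diverges to $+\infty$ and $F_{ll}^{[N]}(1)=1$. The coefficient is $\big(A^{(1)}_{l}(\lambda^{[N]}_1)\mu^{[N]}_{1,1}+\cdots+A^{(p)}_{l}(\lambda^{[N]}_1)\mu^{[N]}_{1,p}\big)B_l(\lambda^{[N]}_1)=w^{\langle N\rangle}_{1,l+1}\,B_l(\lambda^{[N]}_1)$ by \eqref{eq:discrete_linear_form_typeI}. Now $B_l(\lambda^{[N]}_1)>0$ because $\lambda^{[N]}_1$ is the largest eigenvalue of the oscillatory matrix $T^{[N]}$, and the associated right eigenvector $u^{\langle N\rangle}_1=(B_0(\lambda^{[N]}_1),\dots,B_N(\lambda^{[N]}_1))^\top$ is entrywise positive by the Perron--Frobenius part of Theorem~\ref{teo:eigenvectorII}. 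Likewise $w^{\langle N\rangle}_{1,l+1}>0$, since $w^{\langle N\rangle}_1$ is the corresponding left Perron eigenvector, which is also entrywise positive (Theorem~\ref{teo:eigenvectorII}(ii)); its positivity is moreover consistent with the positivity of all the Christoffel numbers $\mu^{[N]}_{1,a}>0$ established in Theorem~\ref{theorem:Christoffel_positivy}. Hence the coefficient of the divergent term is a product of strictly positive numbers, and the bracket $\to+\infty$, so each state $l$ is recurrent; since the chain is finite and (by the positivity of the subdiagonal and superdiagonal entries) irreducible, recurrence of one state gives recurrence of the chain, and the same argument applies verbatim to $\tilde P^{[N]}_I$ using its first-passage formula.

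The main obstacle I anticipate is making rigorous the passage from "$F_{ll}^{[N]}(1)=1$" to "recurrent" in the present normalization: one must check that the generating-function identity $P_{jj}(s)=1+F_{jj}(s)P_{jj}(s)$, valid for $|s|<1$, together with Abel's theorem and the monotone behaviour of the positive coefficients, legitimately yields $\sum_n f^n_{ll}=F_{ll}(1)=1$; the positivity established above is exactly what licenses the interchange of limit and sum. A secondary, purely bookkeeping point is to confirm that $\lambda^{[N]}_1$ is a simple eigenvalue (so that no other node coincides with it and the divergent term is genuinely unique) — this follows from Theorem~\ref{teo:eigennvalues_TN}, which guarantees $N+1$ distinct eigenvalues for an oscillatory matrix. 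With these two points dispatched, the argument is complete.
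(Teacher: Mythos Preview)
Your proposal is correct and follows essentially the same approach as the paper: both argue that the bracket in the denominator of $F_{ll}^{[N]}(s)$ diverges as $s\to 1^-$ because the contribution at the largest eigenvalue $\lambda_1^{[N]}$ has a strictly positive coefficient (by Perron--Frobenius for the oscillatory matrix $T^{[N]}$) and picks up a factor $\frac{1}{1-s}$. Your version is in fact more careful than the paper's, which writes the mass point as $\lambda_{N+1}^{[N]}$ (evidently a slip for $\lambda_1^{[N]}$, since $x_N=x/\lambda_1^{[N]}$) and does not spell out the Abel-theorem and simplicity points you flag.
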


\begin{proof}
 Notice that 
$\lim_{s\to 1^-}F_{ll}^{[N]}(s)=1$, 
and all the states are recurrent. This is a consequence of the presence of a mass at $\lambda^{[N]}_{N+1}$ in the linear form $A^{(1)}_{l-1}\mu^{[N]}_1 + \cdots+A^{(p)}_{l-1}\mu^{[N]}_p$, then the contribution at that zero, which has all its factors strictly positive numbers, will have a term proportional to $\frac{1}{1-s}\to\infty$ as $s\to 1^-$ in the denominator of the second term in the right hand side and, consequently, $F_{ll}^{[N]} \to 1$. 
\end{proof}

 Both Markov chains have the same stationary state:
 
\begin{teo}[Stationary states]\label{teo:stationary_states_finite}
 The stationary distribution $ \mathscr \pi^{[N]}=\begin{bNiceMatrix}
 \pi_{1}^{[N]}&\Cdots&\pi^{[N]}_{N+1}
 \end{bNiceMatrix}, $ of both stochastic matrices $\tilde P_{II}^{[N]}$ and $\tilde P_{I}^{[N]}$ has the following components
 \begin{align*}
 \pi^{[N]}_n=u^{[N]}_{1,n}w^{[N]}_{1,n}=\Big(A^{(1)}_{n-1}(\lambda^{[N]}_{1})\mu^{[N]}_{1,1}+ \cdots+A^{(p)}_{n-1}(\lambda^{[N]}_{1})\mu^{[N]}_{1,p}\Big)B_{n-1}(\lambda^{[N]}_{1})=
 \dfrac{B^{[n]}_{N+1}(\lambda^{[N]}_{1})B_{n-1}(\lambda^{[N]}_{1})}{
 B'_{N+1}(\lambda^{[N]}_{1})}.
 \end{align*}
\end{teo}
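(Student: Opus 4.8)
The plan is to identify the stationary distribution of a stochastic matrix with the properly normalized product of the left and right Perron eigenvectors, exploiting that both $\tilde P_{II}^{[N]}$ and $\tilde P_{I}^{[N]}$ are, by construction, similar to $\tfrac{1}{\lambda_1^{[N]}}T^{[N]}$ via the positive diagonal matrices $\tilde\uppi^{[N]}_{II}$ and $\tilde\uppi^{[N]}_{I}$. Recall from the ``Spectral properties'' proposition that the leading principal submatrix $T^{[N]}$ has a biorthogonal system of right eigenvectors $\{u^{\langle N\rangle}_k\}$ with entries $u^{\langle N\rangle}_{k,n}=B_{n-1}(\lambda^{[N]}_k)$ and normalized left eigenvectors $\{w^{\langle N\rangle}_k\}$, and that $\mathscr U\mathscr W=\mathscr W\mathscr U=I_{N+1}$; this last identity reads componentwise as $\sum_k u^{\langle N\rangle}_{k,n}w^{\langle N\rangle}_{k,m}=\delta_{n,m}$ and, in particular, $\sum_n u^{\langle N\rangle}_{k,n}w^{\langle N\rangle}_{l,n}=\delta_{k,l}$.

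First I would treat the type II chain. Since $\tilde P_{II}^{[N]}=\tilde\uppi_{II}^{[N]}\,\tfrac{T^{[N]}}{\lambda_1^{[N]}}\,(\tilde\uppi_{II}^{[N]})^{-1}$ and $1$ is its largest eigenvalue, a left stationary row vector $\pi^{[N]}$ satisfying $\pi^{[N]}\tilde P_{II}^{[N]}=\pi^{[N]}$ pulls back, via the similarity, to a left eigenvector of $\tfrac{1}{\lambda_1^{[N]}}T^{[N]}$ for eigenvalue $1$; that eigenvector is (a scalar multiple of) $w^{\langle N\rangle}_1$, so $\pi^{[N]}_n=c\,w^{\langle N\rangle}_{1,n}(\tilde\uppi^{[N]}_{II,n})^{-1}=c\,w^{\langle N\rangle}_{1,n}B_{n-1}(\lambda_1^{[N]})=c\,w^{\langle N\rangle}_{1,n}u^{\langle N\rangle}_{1,n}$ for some constant $c$, using the explicit form of $\tilde\uppi^{[N]}_{II,n}=1/B_n(\lambda_1^{[N]})$ (indices shifted appropriately). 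The normalization $\sum_n\pi^{[N]}_n=1$ forces $c=1$ because $\sum_n u^{\langle N\rangle}_{1,n}w^{\langle N\rangle}_{1,n}=1$ is exactly the $(1,1)$ diagonal entry of $\mathscr W\mathscr U=I_{N+1}$; this also confirms $\pi^{[N]}$ is a probability vector since, by Theorem on Christoffel positivity together with the sign analysis, $u^{\langle N\rangle}_{1,n}=B_{n-1}(\lambda_1^{[N]})>0$ and $w^{\langle N\rangle}_{1,n}>0$ entrywise (the Perron eigenvectors are positive). For the type I chain the argument is identical after transposition: $\tilde P_{I}^{[N]}$ is similar to $\tfrac{1}{\lambda_1^{[N]}}(T^{[N]})^\top$ via $\tilde\uppi_I^{[N]}$, and its left Perron eigenvector pulls back to the right Perron eigenvector $u^{\langle N\rangle}_1$ of $\tfrac{1}{\lambda_1^{[N]}}T^{[N]}$; combined with $\tilde\uppi^{[N]}_{I,n}$ being built from the $\Omega_1$-components, i.e.\ from $B^{[n]}_{N+1}(\lambda_1^{[N]})/B^{[1]}_{N+1}(\lambda_1^{[N]})$, one again obtains $\pi^{[N]}_n=u^{\langle N\rangle}_{1,n}w^{\langle N\rangle}_{1,n}$, so both chains share the same stationary state.

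Finally I would translate $u^{\langle N\rangle}_{1,n}w^{\langle N\rangle}_{1,n}$ into the three advertised forms: the expression $u^{\langle N\rangle}_{1,n}=B_{n-1}(\lambda_1^{[N]})$ is immediate; the identity $w^{\langle N\rangle}_{1,n}=A^{(1)}_{n-1}(\lambda_1^{[N]})\mu^{[N]}_{1,1}+\cdots+A^{(p)}_{n-1}(\lambda_1^{[N]})\mu^{[N]}_{1,p}$ is precisely \eqref{eq:discrete_linear_form_typeI}; and the closed form $w^{\langle N\rangle}_{1,n}=B^{[n]}_{N+1}(\lambda_1^{[N]})/B'_{N+1}(\lambda_1^{[N]})$ is the proposition relating truncated polynomials to components of the normalized left eigenvector. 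Substituting the last one gives $\pi^{[N]}_n=B^{[n]}_{N+1}(\lambda_1^{[N]})B_{n-1}(\lambda_1^{[N]})/B'_{N+1}(\lambda_1^{[N]})$, as claimed. The only genuinely delicate point is bookkeeping the two different diagonal conjugations and the index shift $n\leftrightarrow n-1$ so that the stationary row vector coming from the similarity really is $u^{\langle N\rangle}_{1,n}w^{\langle N\rangle}_{1,n}$ with the correct normalization constant $c=1$; once the $\mathscr W\mathscr U=I$ normalization is invoked, everything else is substitution.
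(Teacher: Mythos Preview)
Your proposal is correct and follows essentially the same approach as the paper: both exploit that the diagonal similarity matrices $\tilde\uppi_{II}^{[N]}$ and $\tilde\uppi_{I}^{[N]}$ are (up to a scalar) $\diag(u^{\langle N\rangle}_{1,\cdot})^{-1}$ and $\diag(w^{\langle N\rangle}_{1,\cdot})$, so that the left Perron eigenvector of each stochastic matrix becomes $w^{\langle N\rangle}_1\diag(u^{\langle N\rangle}_{1,\cdot})=(u^{\langle N\rangle}_1)^\top\diag(w^{\langle N\rangle}_{1,\cdot})$, with normalization fixed by $\mathscr W\mathscr U=I_{N+1}$. Your write-up is in fact more explicit than the paper's about the index bookkeeping and the translation into the three displayed forms via \eqref{eq:discrete_linear_form_typeI} and the $B^{[n]}_{N+1}/B'_{N+1}$ identity.
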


\begin{proof}
We peek the right and left eigenvectors, $u_1^{[N]}$ and $w_1^{[N]}$, respectively, normalized by $w_1^{[N]}u_1^{[N]}=1$. If we take
\begin{align*}
\pi^{[N]}=(u_1^{[N]})^\top \diag(w_{1,1},\dots, w_{1,N+1})=w_1^{[N]}\diag (u_{1,1},\dots, u_{1,N+1}),
\end{align*}
we deduce that
\begin{align*}
\pi^{[N]} \tilde P_{II}^{[N]}&=w_1^{[N]}\diag (u_{1,1},\dots, u_{1,N+1})\diag (u_{1,1},\dots, u_{1,N+1})^{-1} \frac{T^{[N]} }{\lambda^{[N]}_{1}}\diag (u_{1,1},\dots, u_{1,N+1})=\pi^{[N]},\\
\pi^{[N]}\tilde P_{I}^{[N]}&=(u_1^{[N]})^\top \diag(w_{1,1},\dots, w_{1,N+1}) \diag(w_{1,1},\dots, w_{1,N+1})^{-1}\frac{ (T^{[N]}) ^\top }{\lambda^{[N]}_{1}}\\
& \phantom{olaolaolaolaolaoalolaolaolaolaolaolaolaola} \times \diag(w_{1,1},\dots, w_{1,N+1})=\pi^{[N]}.
\end{align*}
\end{proof}

Notice that $\pi^{[N]}_n>0$, $n\in\{1,\dots,N+1\}$ and that $\displaystyle \pi^{[N]} \textbf{1}^{[N]} =\sum_{n=1}^{N+1}\pi^{[N]}_n=\sum_{n=1}^{N+1} u_{1,n}w_{1,n}=1$.

 For the notion of stationary state see \cite{Gallager,Karlin-McGregor,Karlin-Taylor1}.
\subsection{Countable infinite Markov chains}

We now discuss the case of a countable Markov chain with an infinite countable set of states.
In previous papers \cite{nuestro2,nuestro1} we discuss the construction of Markov chain beyond birth and death Markov chains, and give as explicit examples the well known Jacobi--Piñeiro multiple orthogonal polynomials, see \cite{VanAssche2}, and the recently found hypergeometric multiple orthogonal polynomials \cite{Lima-Loureiro}.%\footnote{See \cite{Grunbaum_Iglesia} for further developments regarding Jacobi--Piñeiro random walks.}
%\textcolor{red}{For both mentioned examples the recursion matrix $T$ is an oscillatory banded Hessenberg matrix.}

The basic idea of \cite{nuestro1} is to construct a Markov chain beyond birth and death for a given sequence of multiple orthogonal polynomials in the step line. The recursion matrix of these multiple orthogonal polynomials happens to be a banded Hessenberg matrix, for which we find a suitable similarity and scaling to get two stochastic matrices.
In those papers the spectral question was not touched. This is an important issue as now the stochastic matrices are essentially non normal operators and are beyond the well known spectral theory for symmetric or normal operators. We address this issue now.

We have two stochastic matrices $P_{II}$ and $P_I$ (admitting a positive stochastic bidiagonal factorization) describing countable infinite Markov chains. There are similarity transformations so that \eqref{eq:stochasticII} and \eqref{eq:stochasticI} hold, i.e. both Markov chains are described by some oscillatory monic banded Hessenberg matrix $T$ (admitting a positive bidiagonal factorization). 

%The scaled leading principal submatrices $\frac{T^{[N]}}{\lambda^{[N]}_{N+1}}$ of $T$, as described in the previous subsection, lead to stochastic matrices $P^{[N]}_{II}$ and $P^{[N]}_I$, which are not leading principal submatrices of $P_{II}$ and $P_I$, but recover them in the large $N$-limit, 
%$P^{[N]}_{II}\xrightarrow[N\to\infty]{}P_{II}$ and $P^{[N]}_{I}\xrightarrow[N\to\infty]{}P_{I}$. Hence, we must have
%\begin{align*}
%\frac{T^{[N]}}{\lambda^{[N]}_{N+1}}\xrightarrow[N\to\infty]{} T,
%\end{align*}
%so that 
%\begin{align*}
%\lambda^{[N]}_{N+1}\xrightarrow[N\to\infty]{} 1.
%\end{align*}
%These is also indicated by the stochastic condition $P_{II}\textbf{1}=\textbf{1}$ and $P_{I}\textbf{1}=\textbf{1}$ 

% Moreover, for the diagonal matrices we have $H^{[N]}_{II}\xrightarrow[N\to\infty]{}\uppi_{II}$ and 
%$H^{[N]}_{I}\xrightarrow[N\to\infty]{}\uppi_{I}$.

The coeffcients $\uppi_{II,n}$ can be expressed in terms of the type II multiple orthogonal evaluated at $1$. Indeed, from the stochastic property expressed as an eigenvalue property $P_{II}\textbf{ 1}=\textbf{1}$ we get that $T \uppi_{II}^{-1}\textbf{1}=\uppi_{II}^{-1}\textbf{1}$. Hence recalling that $TB(x)=xB(x)$, so that $TB(1)=B(1)$, and the fact that all zeros of the type II polynomials belong to $(0,1]$ and, consequently, that $B_n(1)>0$, leads to that up to a multiplicative positive constant $\uppi_{II}^{-1}\textbf{1}=B(1)$, so that 
\begin{align*}
\uppi_{II,n}&=\frac{1}{B_{n}(1)}, & P_{n-1,n}=\frac{B_n(1)}{B_{n-1}(1)}
\end{align*}

 For the type I case the discussion is more involved. Now, the stochastic property $P_I \textbf{1}=\textbf{1}$ leads to the conclusion that $\Omega\coloneq\textbf{1}^\top \uppi_I$ is a entrywise positive left eigenvector of $T$, with eigenvalue $1$, that is $\Omega T=\Omega$. 
 Let us assume that $ \eta=1$, as was discussed in Remark \ref{rem:1} and the existence of the limits
 \begin{align*}
 	F_a&\coloneq \lim_{N\to\infty }\frac{\mu_{1,a}^{[N]}}{\mu_{1,1}^{[N]}}, &a&\in\{1,\dots, p\}.
 \end{align*}
 Recalling \eqref{eq:Omega1} we see that $\lim_{N\to\infty}\Omega^{\langle N\rangle}$, is an entrywise nonnegative left eigenvector of $T$ with eigenvalue $1$. The entries of this left eigenvector are
\begin{align*}
	 \lim_{N\to\infty }\Omega^{\langle N\rangle}_{1,n}=A^{(1)}_{n-1}(1)+A_{n-1}^{(2)}(1) F_2	+\cdots
	 +A_{n-1}^{(p)}(1)	 F_p.
\end{align*}
If none of these entries vanishes we can take, up to a positive multiplicative constant, 
 \begin{align*}
	\uppi_{I,n}=A_{n}^{(1)}(1)+F_2A_{n}^{(2)}(1)+\cdots +F_pA_{n}^{(p)}(1).
\end{align*}
 These last constructions can be understood after the limit is taken and the weights are given as follows. Let us assume for $a\in\{2,\dots, p\}$ that $\d\psi_a\ll\d\psi_1$, i.e., $\d\psi_k$ is absolutely continuous with respect $\d\psi_1 $ ($\d\psi_a(A)=0$ whenever $\d\psi_1(A)=0$). Then, the Radon--Nikodym derivatives are $F_a=\frac{\d\psi_a}{\d\psi_1}$.

Let us use the notation
\begin{align*}
 \Theta_{II,k,l}&\coloneq \frac{\uppi_{II,k}}{\uppi_{II,l}}
 =\begin{cases}
 \dfrac{1}{P_{l,l+1}\cdots P_{k-1,k}},& l<k,\\
 1, & l=k,\\
 P_{k,k+1}\cdots P_{l-1,l},& l>k,
 \end{cases}&
 \Theta_{I,l,k}&\coloneq \frac{\uppi_{I,l}}{\uppi_{I,k}}=\begin{cases}
 P_{l,l+1}\cdots P_{k,k+1},& l<k,\\
 1, & l=k,\\
 \dfrac{1}{P_{k+1,k}\cdots P_{l,l-1}} ,& l>k.
\end{cases}
\end{align*}

Notice that $ \Theta_{II,k,l}=\frac{B_l(1)}{B_k(1)}$ and when the assumptions discussed above for the type I case hold we also have
\begin{align*}
	\Theta_{I,l,k}&=\frac{
	A^{(1)}_{l}(1)+
	A_{l}^{(2)}(1)F_2+\cdots+A_{l}^{(p)}(1)F_p
}{A^{(1)}_{k}(1)+ A_{k}^{(2)}(1)F_2+\cdots+
	A_{k}^{(p)}(1)F_p}.
\end{align*}
As we did with the finite case we proceed to state the main results regarding countable Markov chains with $(p+2)$-diagonal transition matrices with positive bidiagonal stochastic factorization.

\begin{teo}[Spectral representation]\label{theorem:KMG}
 Let us consider a countable Markov chain with transition matrix an $(p+2)$-diagonal matrix such that admits a positive stochastic bidiagonal factorization as in Definition~\ref{def:positive_stochastic_bidiagonal_factorization} (or its transpose version). Then, there is sequence of multiple orthogonal polynomials of type II, $\{B_n\}_{n=0}^\infty$, and of type I, $\{A^{(1)}_n,\dots,A^{(p)}_n\}_{n=0}^\infty$, associated with positive Lebesgue--Stieltjes measures $\d\psi_1,\dots,\d\psi_p$ such that:
\begin{enumerate}
 \item \textbf{Karlin--McGregor representation formula.}
The iterated probabilities have the following spectral representation
 \begin{align*}
 \left(\big(P_{II}\big)^n\right)_{k,l}&= \Theta_{II,k,l}
 \int_0^1
 (A^{(1)}_{l} \d\psi_1(x)+ \cdots+A^{(p)}_{l}\d\psi_p(x))x^nB_k(x)
 ,\\
 \left(\big(P_{I}^{[N]}\big)^n\right)_{k,l}&= \Theta_{I,l,k}
 \int_0^1
 (A^{(1)}_{k} \d\psi_1(x)+ \cdots+A^{(p)}_{k}\d\psi_p(x))x^nB_l(x).
 \end{align*}
\item \textbf{Spectral representation of generating functions.}
For $|s|<1$, the corresponding transition probability generating functions are
\begin{align*}
 (P_{II}(s))_{k,l}&= \Theta_{II,k,l}
 \int_0^1
 \big(A^{(1)}_{l} \d\psi_1(x)+ \cdots+A^{(p)}_{l}\d\psi_p(x) \big)\frac{B_k(x)}{1-sx}
 ,\\
 (P_{I}(s))_{k,l}&= \Theta_{I,l,k}
 \int_0^1
 \big(A^{(1)}_{k} \d\psi_1(x)+ \cdots+A^{(p)}_{k}\d\psi_p(x)\big) \frac{B_l(x)}{1-sx}.
\end{align*}
For $k\neq l$, the first passage generating functions are
\begin{align*}
 (F_{II}(s))_{k,l}&= \Theta_{II,k,l}
 \dfrac{\int_0^1
 \big(A^{(1)}_{l} \d\psi_1(x)+ \cdots+A^{(p)}_{l}\d\psi_p(x) \big)\frac{B_k(x)}{1-sx}}{\int_0^1
 \big(A^{(1)}_{l} \d\psi_1(x)+ \cdots+A^{(p)}_{l}\d\psi_p(x) \big)\frac{B_l(x)}{1-sx}}
 ,\\
 (F_{I}(s))_{k,l}&= \Theta_{I,l,k}\dfrac{\int_0^1
 \big(A^{(1)}_{k} \d\psi_1(x)+\cdots+ A^{(p)}_{k}\d\psi_p(x) \big)\frac{B_l(x)}{1-sx}}{\int_0^1
 \big(A^{(1)}_{l} \d\psi_1(x)+ \cdots+A^{(p)}_{l}\d\psi_p(x) \big)\frac{B_l(x)}{1-sx}}.
\end{align*}
For $k=l$ the first passage generating functions are the same for type I and II, namely
\begin{align*}
 F_{ll}^{[N]}(s)=1-\dfrac{1}{\int_0^1
 \big(A^{(1)}_{l} \d\psi_1(x)+ \cdots+A^{(p)}_{l}\d\psi_p(x) \big)\frac{B_l(x)}{1-sx}}.
\end{align*}

\item \textbf{Recurrent Markov chains.}
The Markov chain is recurrent if and only if the integral
\begin{align*}
 \int_0^{1}\frac{\d\psi_1(x)}{1-x}
\end{align*}
diverges. Otherwise is transient.
\end{enumerate}
\end{teo}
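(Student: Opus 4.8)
The plan is to get parts (i)--(ii) of the theorem by passing to the limit $N\to\infty$ in the finite-dimensional Karlin--McGregor and generating-function formulas already established for the truncated stochastic matrices $\tilde P^{[N]}_{II},\tilde P^{[N]}_{I}$, and to get the recurrence dichotomy of part (iii) by collapsing the Karlin--McGregor formula at the state $0$ to a single scalar series which is manifestly $\int_0^1\frac{\d\psi_1(x)}{1-x}$.

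For (i) and (ii) I would start from the finite-dimensional formulas proved above for $\tilde P^{[N]}$. Since $T$ is bounded and admits a positive bidiagonal factorization, $\lambda^{[N]}_1\to\eta$, and, as argued in Remark~\ref{rem:1}, $\eta=1$; hence $x_N=x/\lambda^{[N]}_1\to x$ and the prefactors $\Theta^{[N]}_{II,k,l},\Theta^{[N]}_{I,l,k}$ converge to $\Theta_{II,k,l},\Theta_{I,l,k}$. Along the Helly subsequence of Theorem~\ref{theorem:spectral_representation_bis} the discrete measures $\psi^{[N]}_a$ converge weakly to $\psi_a$, while the integrands $x^nB_k(x)$ and, for $|s|<1$, $B_k(x)/(1-sx)$ are continuous on $\Delta\subseteq[0,1]$; so Helly's second theorem converts each finite identity into the asserted integral identity. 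The first-passage formulas then follow from the algebraic relations $P_{ij}(s)=F_{ij}(s)P_{jj}(s)$ and $P_{jj}(s)=1+F_{jj}(s)P_{jj}(s)$, exactly as in the finite case.

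For (iii): the transition matrix is a banded Hessenberg matrix with strictly positive first superdiagonal ($P_{n,n+1}>0$) and strictly positive lowest subdiagonal ($P_{n+p,n}>0$), so its directed graph is strongly connected and the chain is irreducible; recurrence is then a class property and the chain is recurrent iff $\sum_{n=0}^\infty (P^n)_{0,0}=+\infty$. Specialising the Karlin--McGregor formula of part (i) to $k=l=0$, and using $\Theta_{II,0,0}=1$, $B_0\equiv1$ and $A^{(1)}_0=1$, $A^{(2)}_0=\cdots=A^{(p)}_0=0$ from the initial conditions \eqref{eq:initcondtypeI} (equivalently $e^\nu_1=e_1$ in Proposition~\ref{pro:spectral_moments_Weyl}), one gets $\big((P_{II})^n\big)_{0,0}=\big((P_{I})^n\big)_{0,0}=\int_0^1 x^n\,\d\psi_1(x)$ for every $n\in\N_0$. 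Summing over $n$ and applying monotone convergence ($\sum_{n\ge0}x^n=\tfrac1{1-x}$ on $[0,1)$, and a possible atom $\psi_1(\{1\})>0$ sends both sides to $+\infty$ simultaneously) yields $\sum_{n=0}^\infty(P^n)_{0,0}=\int_0^1\frac{\d\psi_1(x)}{1-x}$; hence the chain is recurrent precisely when this integral diverges and transient otherwise. Equivalently one may argue through generating functions: $P_{0,0}(s)=\int_0^1\frac{\d\psi_1(x)}{1-sx}$ and $F_{0,0}(s)=1-1/P_{0,0}(s)$, and letting $s\to1^-$ gives $\lim_{s\to1^-}F_{0,0}(s)=1$ iff $\int_0^1\frac{\d\psi_1(x)}{1-x}=+\infty$.

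The substantive content has already been spent in the earlier sections — the finite spectral picture, the positivity of the Christoffel numbers, and the Favard/Helly limit — so the only genuine obstacles remaining for this theorem are bookkeeping ones: securing $\eta=1$ so that the $\Theta^{[N]}$ and $x_N$ limits behave, checking irreducibility so that recurrence may be tested at the convenient state $0$, and treating the endpoint $x=1$ carefully in the monotone-convergence step (the case $\psi_1(\{1\})>0$ being precisely the ergodic regime discussed next).
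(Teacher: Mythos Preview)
Your argument for part~(iii) is clean and correct: irreducibility from the band structure, reduction to the state $0$, collapse of the Karlin--McGregor formula there to $\int_0^1 x^n\,\d\psi_1(x)$ via the initial conditions \eqref{eq:initcondtypeI}, and monotone convergence. This is exactly the classical Karlin--McGregor mechanism and matches the paper's spirit.

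For parts~(i)--(ii), however, you take a detour that introduces a genuine gap. You pass through the finite formulas for the \emph{auxiliary} stochastic matrices $\tilde P^{[N]}$ and then take $N\to\infty$. But $\tilde P^{[N]}$ is not the leading principal submatrix of $P$; it is $\tilde\uppi^{[N]}\,\tfrac{T^{[N]}}{\lambda_1^{[N]}}\,(\tilde\uppi^{[N]})^{-1}$ with $\tilde\uppi^{[N]}_{II,k}=1/B_k(\lambda_1^{[N]})$. Hence the convergence $\big((\tilde P^{[N]})^n\big)_{k,l}\to (P^n)_{k,l}$ requires both $\Theta^{[N]}_{II,k,l}\to\Theta_{II,k,l}$ and $(\lambda_1^{[N]})^{-n}\to 1$, i.e.\ precisely $\eta=1$. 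You call this ``bookkeeping'', but Remark~\ref{rem:1} only \emph{suggests} $\eta=1$; it is not proved anywhere, and you do not supply a proof either. So as written your route for (i)--(ii) is incomplete.

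The paper's intended argument (``as we did with the finite case'') is by analogy, not by limit: the finite Karlin--McGregor formula was proved in one line from the discrete biorthogonality of Theorem~\ref{theorem:birothoganality}, and the infinite one follows in the same one line from the continuous biorthogonality of Theorem~\ref{theorem:spectral_representation_bis}. Concretely, $P_{II}=\uppi\,T\,\uppi^{-1}$ gives $(P_{II}^n)_{k,l}=\Theta_{II,k,l}\,(T^n)_{k,l}$; iterating $TB(x)=xB(x)$ yields the finite expansion $x^nB_k(x)=\sum_m (T^n)_{k,m}B_m(x)$, and integrating against $A^{(1)}_l\d\psi_1+\cdots+A^{(p)}_l\d\psi_p$ and using biorthogonality gives $(T^n)_{k,l}=\int_\Delta(A^{(1)}_l\d\psi_1+\cdots+A^{(p)}_l\d\psi_p)\,x^nB_k$ directly. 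This needs only $\Delta\subseteq[0,1]$ (already established) and bypasses $\eta=1$ entirely. Parts~(ii) then follow by summing the geometric series and the algebraic identities for $F_{ij}(s)$, exactly as you wrote.
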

%\textcolor{red}{Discutir !!!!}

Regarding stationary states we find:
\begin{teo}[Ergodic Markov chains]\label{theorem:ergodic}
 The Markov chain described in previous Theorem \ref{theorem:KMG} is ergodic (or positive recurrent) if and only $1$ is a mass point of $\d\psi_1,\d\psi_2,\dots,\d\psi_p$ with masses $m_1>0$ and $m_2,\dots,m_p\geqslant 0$, respectively. In that case, the corresponding stationary distribution is
 \begin{align*}
 \pi&=\left[\begin{NiceMatrix}
 \pi_1 &\pi_2 &\Cdots
 \end{NiceMatrix}\right], &
 \pi_{n+1}&= (A^{(1)}_n(1)m_1+\cdots+A^{(p)}_n(1) m_p)B_n(1).
 \end{align*}
\end{teo}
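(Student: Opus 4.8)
The plan is to characterize ergodicity (positive recurrence) through the standard criterion: an irreducible recurrent chain is positive recurrent if and only if it admits a summable invariant measure, which can then be normalized to a stationary distribution. First I would establish the ``if'' direction. Assume $1$ is a mass point of all the measures $\d\psi_1,\dots,\d\psi_p$ with masses $m_1>0$, $m_2,\dots,m_p\geqslant 0$. Then, by part (iii) of Theorem~\ref{theorem:KMG}, since $\int_0^1 \frac{\d\psi_1(x)}{1-x}$ diverges (the atom at $1$ forces divergence), the chain is recurrent. To identify the stationary state, I would mimic the finite-dimensional computation in Theorem~\ref{teo:stationary_states_finite}: the vector $B(1) = [B_0(1), B_1(1),\dots]^\top$ is a right eigenvector of $T$ with eigenvalue $1$ (from $TB(x) = xB(x)$ at $x=1$), while the row vector $\Omega$ with entries $A^{(1)}_n(1)m_1 + \cdots + A^{(p)}_n(1)m_p$ is a left eigenvector of $T$ with eigenvalue $1$ — this last fact follows from the type I recursion \eqref{eq:recursion_dual_A} evaluated at $x=1$ together with linearity in the masses. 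Then $\pi_{n+1} = \Omega_{n+1} \cdot B_n(1) = (A^{(1)}_n(1)m_1 + \cdots + A^{(p)}_n(1)m_p)B_n(1)$ is a candidate invariant vector for $P_{II}$ (and by transposition for $P_I$), since $\pi P = \pi$ reduces to the eigenvector relations after conjugating by the diagonal matrix $\uppi_{II}$ as in the proof of Theorem~\ref{teo:stationary_states_finite}.

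The crux is then to show that this $\pi$ is \emph{summable}, i.e. $\sum_{n\geqslant 0}\pi_{n+1} < \infty$, and that summability is \emph{equivalent} to the presence of the atom at $1$. For summability I would use the confluent Christoffel--Darboux formula \eqref{eq:CD1_confluent}, or rather its finite-$N$ spectral form: by Proposition on spectral properties and \eqref{eq:kcomponentelefteigenII}, the finite stationary masses satisfy $\pi^{[N]}_n = u^{[N]}_{1,n}w^{[N]}_{1,n}$ with $\sum_{n=1}^{N+1}\pi^{[N]}_n = 1$, and as $N\to\infty$ one has $\mu^{[N]}_{1,a} \to m_a$ (the mass at $\lambda^{[N]}_1 \to \eta = 1$ converging to the mass at $1$ of $\d\psi_a$), $\lambda^{[N]}_1 \to 1$, so that $\pi^{[N]}_n \to \pi_n$ entrywise. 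By Fatou's lemma $\sum_n \pi_n \leqslant 1 < \infty$, giving summability; normalizing yields the stationary distribution, hence ergodicity.

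For the ``only if'' direction I would argue contrapositively: if $\d\psi_1$ has no atom at $1$ (so $m_1 = 0$), then either the chain is transient — when $\int_0^1 \frac{\d\psi_1(x)}{1-x}$ converges, by Theorem~\ref{theorem:KMG}(iii) — or it is null recurrent. In the recurrent case, the invariant measure is unique up to scaling, so it must coincide with the vector $\pi$ built above from the (now possibly vanishing) masses at $1$; I would show that without the atom $m_1>0$ this vector either is the zero vector or fails to be summable. The key estimate is that $\sum_{n=1}^{N+1}\pi^{[N]}_n = 1$ while, if there is no atom at $1$, the weight $\mu^{[N]}_{1,1}$ of the extreme node $\lambda^{[N]}_1$ tends to $0$; combined with $\lambda^{[N]}_1 \to 1$ and the growth of $B_n(1)$, the limiting entries $\pi_n$ cannot sum to a finite positive number — the total mass ``escapes to infinity'', which is precisely null recurrence. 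Establishing this escape rigorously is the main obstacle: it requires controlling the joint asymptotics of $\mu^{[N]}_{1,1}$, $\lambda^{[N]}_1$, and the polynomial values $A^{(a)}_n(\lambda^{[N]}_1)$, $B_n(\lambda^{[N]}_1)$ uniformly in $n$ and $N$, presumably via the Christoffel--Darboux kernel $\sum_{l=0}^{N}Q_{l,N}(\lambda^{[N]}_1)B_l(\lambda^{[N]}_1) = Q_{N,N}(\lambda^{[N]}_1)B'_{N+1}(\lambda^{[N]}_1)$ appearing in \eqref{eq:kcomponentelefteigenI} and the relation between its $N\to\infty$ limit and $\int_0^1\frac{\d\psi_1(x)}{1-x}$.
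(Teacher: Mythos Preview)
Your ``if'' direction and the identification of the stationary vector via the right/left eigenvectors of $T$ at $x=1$, together with the Fatou argument for summability, are correct and close in spirit to the paper's limiting argument from Theorem~\ref{teo:stationary_states_finite}. The difficulty you flag in the ``only if'' direction, however, is real, and the paper does \emph{not} resolve it by controlling the joint asymptotics of $\mu^{[N]}_{1,1}$, $\lambda^{[N]}_1$, and the polynomial values as you propose.

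Instead, the paper bypasses the summability question entirely by invoking the classical Karlin--McGregor criterion: an irreducible recurrent chain is positive recurrent if and only if $\lim_{n\to\infty} P_{00}^{2n}>0$. By the Karlin--McGregor representation in Theorem~\ref{theorem:KMG}(i) with $k=l=0$ one has $P_{00}^{2n}=\int_0^1 x^{2n}\,\d\psi_1(x)$, and since $x^{2n}\to \chi_{\{1\}}(x)$ monotonically on $[0,1]$, dominated (or monotone) convergence gives $\lim_{n\to\infty}P_{00}^{2n}=\psi_1(\{1\})=m_1$. Thus positive recurrence is equivalent to $m_1>0$ in one stroke, with no need to analyze the escape of mass in the finite approximations. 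Once $m_1>0$ is known, the formula for $\pi$ follows from the finite-$N$ expression in Theorem~\ref{teo:stationary_states_finite} by taking $N\to\infty$, using that $\lambda^{[N]}_1\to 1$ and $\mu^{[N]}_{1,a}\to m_a$ (the latter limits exist precisely because $1$ is a mass point). Your eigenvector construction then confirms that this limit is indeed stationary.

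In short: the missing ingredient is the spectral characterization of positive recurrence through $\lim_n P_{00}^{2n}$, which turns the ``only if'' direction into a one-line monotone convergence argument rather than a delicate uniform estimate.
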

\begin{proof}
 
Following \cite{Karlin-McGregor}, se also \cite{Gallager}, for a recurrent chain, the expected first passage times are all finite if
 $\displaystyle
 0<\lim_{n\to\infty} P_{00}^{2n}<+\infty$, that is
 \begin{align*}
 \int_0^1\d\psi_1 (x)x^{2n}<+\infty.
 \end{align*}
 The argument goes as in \cite{Karlin-McGregor}. Since $x^{2n}\xrightarrow[n\to\infty]{} 0$ monotonically in $0<x<1$ the chain is ergodic if and only if $\psi_1$ has a jump at $x=1$, and \cite{Karlin-McGregor} shows that the jump occurs at $x=1$.
Therefore, from
Theorem \ref{teo:stationary_states_finite}
we see that 
 \begin{align*}
 \lim_{N\to\infty} \pi^{[N]}_{n+1}=\Big(A^{(1)}_{n}(1) \lim_{N\to\infty} \mu^{[N]}_{1,1}+ \cdots+A^{(p)}_{n}(1) \lim_{N\to\infty} \mu^{[N]}_{1,p}\Big)B_{n}(1),
 \end{align*}
and the large $N$-limits of the Christoffel coefficients $\lim_{N\to\infty} \mu^{[N]}_{1,k}=m_k$ exists due to fact that $1$ is a mass point.

As we have a mass at $1$, we have a simple pole at $1$ in the resolvent; i.e. $1$ is an eigenvalue. Hence, $\sum_{n=1}^\infty\pi_n^2<\infty$.
\end{proof}

 For the notion of ergodic state see \cite{Gallager,Karlin-McGregor,Karlin-Taylor1}.

%\begin{rem}
%Using Theorem \ref{theorem:ergodic} and results from \cite{nuestro1,nuestro2} we conclude that for the Jacobi--Piñeiro, in the semi-band $0<\alpha-\beta<1$, and for the hypergeometric case, as there are no masses, the random walks when recurrent are null recurrent, they are not ergodic, and the return time is infinity. 
%\end{rem}

\begin{rem}
 Krein--Rutman~\cite{Krein_Rutman} seminal theorem on the existence of a positive eigenvector with eigenvalue given by the spectral radius for a compact positive operator, extended previous results in finite dimensions of Perron~\cite{perron1} and Frobenius~\cite{Frobenius,Frobenius2,Frobenius3}. Recall that our banded operators are compact whenever $P_{n,n-i}\xrightarrow[n\to\infty]{}0$, $i\in\{-1,0,1,\dots,p\}$, see \cite{vanassche0}. This is clearly incompatible with the stochastic requirement $\sum_{i=-1}^pP_{n,n-i}=1$. Hence, the Krein--Rutman theorem is not applicable for the case under consideration, oscillatory banded Hessenberg matrices. 
 
 In \cite{Karlin} the results of Krein and Rutman where extended to bounded positive operators in the quasi compact class, i.e., those operators $T$ such that for some integer $k$ we have $\|T^k-K\|<1$ for some compact operator~$K$.
 %\footnote{This is equivalent to the existence of a sequence $K_n$ of compact operators such that $\lim_{n\to\infty} \|T^n-K_n\|=0$.}
 Hence, if the bounded positive matrix is quasi compact, then $1$ is an eigenvalue with totally positive right and left eigenvectors in $\mathscr l^2$. Following \cite{Nussbaum} we know that a strongly positive matrix, a positive matrix $A$ taking totally positive vectors into totally positive vectors, with spectral radius ($1$ in our case) strictly bigger than the essential spectral radius (the supreme of those $\lambda$ such that $\lambda I-A$ being not a Fredholm operator), then $1$ is a simple eigenvalue with a totally positive eigenvector and all the other eigenvalues absolute values less than unity. The existence of these spectral gaps is equivalent to quasi compactness.
% For the Jacobi--Piñeiro and hypergeometric cases mentioned earlier, we know that \cite{nuestro1,nuestro2} the stochastic matrices of type II (a similar discussion holds for type I) can be written as $P=\Theta+K$ with
% \begin{align*}
% \Theta&\coloneq \frac{1}{27}\left( \begin{NiceMatrix}[columns-width = 0.5cm]
% 12 & 8 & 0 & 0 & 0 & \Cdots\\ 
% 6 &12& 8& 0 & 0 & \Ddots \\ 
% 1& 6& 12& 8&0&\Ddots\\ 
% 0&1 &6 & 12& 8& \Ddots\\ \Vdots&\Ddots&\Ddots&\Ddots&\Ddots&\Ddots
% \end{NiceMatrix}\right), &
% K &\coloneq \left( \begin{NiceMatrix}[columns-width = 0.5cm]
% \delta r_0 & \delta s_0 & 0 & 0 & 0 & \Cdots\\ 
% \delta q _1& \delta r_1 & \delta s_1 & 0 & 0 & \Ddots \\ 
% \delta p_2& \delta q_2& \delta P_{2}& \delta s_{2}&0&\Ddots\\ 
% 0&\delta p_3& \delta q_3& \delta P_{3}& \delta s_{3}& \Ddots\\ 
% \Vdots&\Ddots&\Ddots&\Ddots&\Ddots&\Ddots
% \end{NiceMatrix} \right),
% \end{align*}
% with $K$ a compact operator, i.e., $\delta p_n, \delta q_n ,\delta r_n, \delta s_n\xrightarrow[n\to\infty]{}0$, and $\Theta$ a Toeplitz matrix with $\|\Theta^n\|=1$ for any $n\in\N$ and the previous extension to quasi-compact operators can not be applied. 
\end{rem}

\begin{rem}
\begin{enumerate}
 \item For the positive recurrent case the operator $T$ has $1$ as an eigenvalue and a corresponding entrywise positive right eigenvector 
 \begin{align*}
 u_1&=\left[\begin{NiceMatrix}
 1 
 & B_1(1)
 & B_2(1)
 & \Cdots
 \end{NiceMatrix}\right]^\top, &Tu_1&=u_1,
 \end{align*}
with finite norm $\|u_1\|^2=\sum_{n=0}^\infty \big(B_n(1)\big)^2<\infty$, and entrywise positive left eigenvector
\begin{align*}
 w_1&= \left[\begin{NiceMatrix}
 A^{(1)}_0(1)m_1+\cdots+A^{(p)}_0(1)m_p&
 A^{(1)}_1(1)m_1+\cdots+A^{(p)}_1(1)m_p&
 %A^{(1)}_2(1)m_1+\cdots+A^{(p)}_2(1)m_p&
 \Cdots
 \end{NiceMatrix}\right], & w_1T&=w_1,
\end{align*}
with $\|w_1\|^2=\sum_{n=0}^\infty \big(A^{(1)}_n(1)m_1+\cdots+A^{(p)}_n(1)m_p\big)^2<\infty$.

\item Notice that $\pi_n>0$, $n\in\N$ and that $\lim_{N\to\infty}\sum_{n=1}^{N+1}\pi_n=1$ (proof: it holds for each $N$ and the large $N$-limit exists).
 Observe also that for the masses we have
 \begin{align*}
m_k&=\lim_{N\to\infty}\frac{B^{(k)}_{N+1}(1)}{B_{N+1}'(1)}.
 \end{align*}
\end{enumerate}
\end{rem}

\section*{Conclusions and outlook}
The main result we achieve in this paper is that bounded banded Hessenberg matrices that admit a positive bidiagonal factorization have a set positive measures, and can be spectrally described by multiple orthogonal polynomials. This extends to the non-normal scenario the spectral Favard theorem for Jacobi matrices, see \cite{Simon}, that we can show that for an adequate shift becomes oscillatory. 
%We have given examples of this construction, the Toeplitz case leading to 2-orthogonal Chebyshev polynomials, and the hypergeometric recursion matrix, described in \cite{Lima-Loureiro}, also fits in our theory. We have applied these results to Markov chains beyond birth and death with tetradiagonal stochastic matrices. The Jacobi--Piñeiro multiple orthogonal polynomials illustrates the fact that this theory do not exhaust all possibilities, as the weights exist beyond the oscillatory situation.
%We suggest that maybe a more refined requirement is oscillation for a retracted complementary matrix. 
Open questions~are:
\begin{enumerate}
	\item What happens when the banded recursion matrix has several superdiagonals as well as subdiagonals? What about the corresponding Markov chains?
	\item Chebyshev (T) systems appear in \cite{Gantmacher-Krein} in relation with influence kernels and oscillatory matrices. 
	Is there any connection between the AT property and the oscillation of the matrix or some submatrix of it?
\end{enumerate}

\section*{Acknowledgments}
AB acknowledges Centro de Matemática da Universidade de Coimbra UID/MAT/00324/2020, funded by the Portuguese Government through FCT/MEC and co-funded by the European Regional Development Fund through the Partnership Agreement PT2020.

AF acknowledges CIDMA Center for Research and Development in Mathematics and Applications (University of Aveiro) and the Portuguese Foundation for Science and Technology (FCT) within project UIDB/MAT/UID/04106/2020 and UIDP/MAT/04106/2020.

MM acknowledges Spanish ``Agencia Estatal de Investigación'' research projects [PGC2018-096504-B-C33], \emph{Ortogonalidad y Aproximación: Teoría y Aplicaciones en Física Matemática} and [PID2021- 122154NB-I00], \emph{Ortogonalidad y Aproximación con Aplicaciones en Machine Learning y Teoría de la Probabilidad}.

%\enlargethispage{1cm}

\end{document}